\documentclass[11pt]{siamltex1213}
\usepackage{amsmath}
 \setlength{\textwidth}{38pc}

\usepackage{bm}
\usepackage{amssymb,version}
\usepackage{cases}
\newtheorem{rem}{Remark}[section]
\allowdisplaybreaks
\begin{document}
    \title{Error analysis of the SAV-MAC scheme for the Navier-Stokes equations} 
\thanks{The work of X. Li is supported by the National Natural Science Foundation of China under grant number 11901489 and Postdoctoral Science Foundation of China under grant numbers BX20190187 and 2019M650152. The work of J. Shen is supported in part by NSF grants  DMS-1620262, DMS-1720442 and AFOSR  grant FA9550-16-1-0102.}

    \author{ Xiaoli Li
        \thanks{School of Mathematical Sciences and Fujian Provincial Key Laboratory on Mathematical Modeling and High Performance Scientific Computing, Xiamen University, Xiamen, Fujian, 361005, China. Email: xiaolisdu@163.com}.
        \and Jie Shen 
         \thanks{Corresponding Author. Department of Mathematics, Purdue University, West Lafayette, IN 47907, USA. Email: shen7@purdue.edu}.
}

\maketitle

\begin{abstract}
An efficient numerical scheme based on the scalar auxiliary variable (SAV) and  marker and cell scheme (MAC) is constructed for the Navier-Stokes equations. A particular feature of the scheme is that the  nonlinear term is treated explicitly while being unconditionally energy stable. A rigorous error analysis is carried out to show that both velocity and pressure approximations are second-order accurate  in time and space. Numerical experiments are presented to verify the theoretical results.  
\end{abstract}

 \begin{keywords}
MAC scheme, scalar auxiliary variable (SAV),  energy stability, error estimates, numerical experiments
 \end{keywords}
 
    \begin{AMS}
65M06, 65M12, 65M15, 76D07
    \end{AMS}
\pagestyle{myheadings}
\thispagestyle{plain}
\markboth{XIAOLI LI AND JIE SHEN} {Energy Stability and Convergence of MAC Scheme}
 \section{Introduction}
 We consider in this paper the following incompressible Navier-Stokes equations:
 \begin{subequations}\label{e_model}
    \begin{align}
     \frac{\partial \textbf{u}}{\partial t}+\textbf{u}\cdot \nabla\textbf{u}
     -\nu\Delta\textbf{u}+\nabla p=\textbf{f}
     \quad &\ in\ \Omega\times J,
      \label{e_modelA}\\
      \nabla\cdot\textbf{u}=0
      \quad &\ in\ \Omega\times J,
      \label{e_modelB}\\
     \textbf{u}=\textbf{0} \quad &\ on\ \partial\Omega\times J,
      \label{e_modelC}
    \end{align}
  \end{subequations}
where $\Omega$ is an open bounded domain in $\mathbb{R}^d$ ($d=2,3$), $J=(0,T]$, $(\textbf{u},p)$ represent the unknown   velocity and pressure,  $\textbf{f}$ is an external body force,
 $\nu>0$ is the  viscosity coefficient and $\textbf{n}$ is the unit outward normal  of the domain $\Omega$. 
 
Numerical solution of the Navier-Stokes equations plays an important role in computational fluid dynamics, and  an  enormous amount of work have been devoted on the design, analysis and implementation of numerical schemes for the Navier-Stokes equations, see \cite{temam2001navier,girault2012finite,gunzburger1991analysis} and the references therein. 

One of the main difficulties in numerically solving Navier-Stokes equations is the treatment of nonlinear term. There are essentially three type of treatments: (i) fully  implicit: leads to a nonlinear system to solve at each time step; (ii) semi-implicit: needs to solve a coupled elliptic equations with variable coefficients at each time step;  and (iii) explicit: only has to solve a generalized Stokes system or even decoupled Poisson type equations at each time step, but suffers from a CFL time step constraint 
at intermediate or large Reynolds numbers. 

From a computational point of view, it would be ideal to be able to treat the nonlinear term explicitly without any stability constraint. 
In a recent work \cite{lin2019numerical}, Dong et al. constructed such a scheme by introducing an auxiliary variable. The  scheme was inspired by the recently introduced scalar auxiliary variable (SAV) approach \cite{shen2018convergence,shen2018scalar} which can lead to linear, second-order, unconditionally energy stable schemes that require  solving only decoupled elliptic equations with constant coefficients at each time step for a large class of gradient flows. The scheme constructed in \cite{lin2019numerical} for Navier-Stokes equations requires solving two generalized Stokes equations (with constant coefficient) plus a nonlinear algebraic equation for the auxiliary variable at each time step. Hence, it is very efficient compared with other existing schemes.  Ample numerical results presented in \cite{lin2019numerical} indicates that the scheme is very effective for a variety of situations. 

However,  the nonlinear algebraic equation for the auxiliary variable has multiple solutions and it is not clear whether all solutions converge to the exact solution or how to choose the right solution. This question can only be fully answered with a rigorous convergence analysis. But
 due to the explicit treatment of the nonlinear term and  the  nonlinear algebraic equation associated to the auxiliary variable, its convergence and error analysis can not be  obtained using  a standard procedure.  More precisely,
two of the main difficulties for convergence and error analysis are (i) to derive a uniform $L^\infty$ bound for the numerical solution from the modified energy stability, and (ii) to deal with the nonlinear algebraic equation for the auxiliary variable.

In this paper, we shall construct a fully discrete SAV  scheme for the Navier-Stokes equations with the marker and cell (MAC) method \cite{lebedev1964difference,Welch1965The}  for the spatial discretization.  The MAC scheme has been widely used in engineering applications due to its simplicity while satisfying  the discrete incompressibility constraint, as well as  locally conserving the mass, momentum and kinetic energy \cite{Perot2000Conservation,Perot2011Discrete}. 
The stability and  error estimates for the MAC scheme has been well studied, see for instance \cite{girault1996finite,chen2018energy,li2015superconvergence,han1998new} and  the references therein. Most of the error estimates are only   
 first order  for both the velocity and the pressure, although  Nicolaides \cite{Nicolaides1992Analysis} pointed out that numerical results suggest that the velocity is second order convergent without proof. Inspired by the  techniques in \cite{rui2012block,monk1994convergence} for Darcy-Forchheimer and Maxwell's equations, Rui and Li  established the discrete LBB condition for the MAC method and derived  second order error estimates for both the velocity and the pressure in discrete $L^2$ norms for the Stokes equations in \cite{rui2017stability,li2019superconvergence} and for the Navier-Stokes equations in  \cite{li2018superconvergence}.


The main purposes of this paper are (i) to construct  a SAV-MAC scheme for the Navier-Stokes equations, establish its energy stability,  and present an efficient algorithm for solving the resulting system which is weakly nonlinear;  (ii) to carry out a rigorous error analysis for the SAV-MAC scheme.
In particular, at each time step, our SAV-MAC scheme leads to  two discrete MAC schemes for  generalized Stokes system that can be efficiently solved by using the usual techniques developed for the MAC scheme, and  a quadratic algebraic equation for the auxiliary variable. 

The main contributions of this paper is  a rigorous error analysis with  second order  error estimates in time and space for both the velocity and pressure. This is achieved by using a bootstrap argument to establish the uniform bound for the approximate solution, followed by a sequence of delicate estimates. Our results show in particular   that at least one  solution of the quadratic algebraic equation for the auxiliary variable will converge to the exact solution. To the authors' best knowledge, this is the  first rigorous error analysis for a unconditionally energy stable scheme for the Navier-Stokes equations where  the nonlinear term is treated explicitly.

The paper is organized as follows. In Section 2,  we present the semi-discrete SAV scheme and fully discrete SAV-MAC scheme, establish the energy stability and show how to numerically solve them efficiently.  
In Section 3, we  carry  out a  rigorous error analysis to establish second order error estimates for the full discrete SAV-MAC scheme. Numerical results are presented in 
 Section 4 to  validate our theoretical results. 

We now present some notations and conventions used in the  sequel.
Throughout the paper we use $C$, with or without subscript, to denote a positive
constant, which could have different values at different appearances.

Let $L^m(\Omega)$ be the standard Banach space with norm
$$\| v\|_{L^m(\Omega)}=\left(\int_{\Omega}| v|^md\Omega\right)^{1/m}.$$
For simplicity, let
$$(f,g)=(f,g)_{L^2(\Omega)}=\int_{\Omega}fgd\Omega$$
denote the $L^2(\Omega)$ inner product,
 $\|v\|_{\infty}=\|v\|_{L^{\infty}(\Omega)}.$ And $W_p^k(\Omega)$ be the standard Sobolev space
$$W_p^k(\Omega)=\{g:~\| g\|_{W_p^k(\Omega)}<\infty\},$$
where
\begin{equation}\label{enorm1}
\| g\|_{W_p^k(\Omega)}=\left(\sum\limits_{|\alpha|\leq k}\| D^\alpha g\|_{L^p(\Omega)}^p \right)^{1/p}.
\end{equation}

  \section{The SAV-MAC scheme}
 In this section, we construct the second order MAC scheme based on the SAV approach for the Navier-Stokes equation.
 
 Define the scalar auxiliary variable $q(t)$ by 
 \begin{equation}\label{e_definition of q}
\aligned
q(t)=\sqrt{E(\textbf{u})+\delta},
\endaligned
\end{equation} 
 where $E(\textbf{u})=\int_{\Omega}\frac{1}{2}|\textbf{u}|^2$ is the total energy of the system and $\delta$ is an arbitrarily small positive constant. Then we have
\begin{equation}\label{e_equivalent form of q}
\aligned
\frac{\rm{d} q}{\rm{d} t}=\frac{1}{2q}\int_{\Omega}\frac{\partial \textbf{u}}{\partial t}\cdot \textbf{u}d\textbf{x}+\frac{1}{2\sqrt{E(\textbf{u})+\delta}}\int_{\Omega}\textbf{u}\cdot \nabla\textbf{u}\cdot \textbf{u}d\textbf{x}.
\endaligned
\end{equation}

Inspired by the work in \cite{lin2019numerical}, we transform the governing system into the following equivalent form:
  \begin{numcases}{}
 \frac{\partial \textbf{u}}{\partial t}+\frac{q(t)}{\sqrt{E(\textbf{u})+\delta}}\textbf{u}\cdot \nabla\textbf{u}
     -\nu\Delta\textbf{u}+\nabla p=\textbf{f},  \label{e_model_transform1}\\
  \frac{\rm{d} q}{\rm{d} t}=\frac{1}{2q}\int_{\Omega}\frac{\partial \textbf{u}}{\partial t}\cdot \textbf{u}d\textbf{x}+\frac{1}{2\sqrt{E(\textbf{u})+\delta}}\int_{\Omega}\textbf{u}\cdot \nabla\textbf{u}\cdot \textbf{u}d\textbf{x},     \label{e_model_transform2}\\
 \nabla\cdot\textbf{u}=0. \label{e_model_transform3}
\end{numcases}
 \subsection{The semi discrete case} For the readers' convenience, we shall first construct a second-order semi-discrete SAV scheme based on the Crank-Nicolson, although we are mainly concerned with the analysis of  a fully discrete scheme in this paper.

  Set
$$\Delta t=T/N,\ t^n=n\Delta t, \ \rm{for} \ n\leq N,$$
and define
$$[d_{t}f]^n=\frac{f^n-f^{n-1}}{\Delta t},\ \ f^{n+1/2}=\frac{f^n+f^{n+1}}{2}.$$

Then the SAV scheme based on Crank-Nicolson for \eqref{e_model_transform3} is: 
  \begin{numcases}{}
   \frac{\textbf{u}^{n+1}-\textbf{u}^{n}}{\Delta t}+\frac{q^{n+1/2}}{\sqrt{E(\tilde{\textbf{u}}^{n+1/2})+\delta}}\tilde{\textbf{u}}^{n+1/2}\cdot \nabla\tilde{\textbf{u}}^{n+1/2}
     -\nu\Delta\textbf{u}^{n+1/2}     +\nabla p^{n+1/2}=\textbf{f}^{n+1/2}, \label{e_model_semi1}\\
 \frac{q^{n+1}-q^n}{\Delta t}=\frac{1}{2q^{n+1/2}}(\frac{\textbf{u}^{n+1}-\textbf{u}^{n}}{\Delta t},\textbf{u}^{n+1/2}) \notag \\ 
   \ \ \ \ \ \ \ \ 
 +\frac{1}{2\sqrt{E(\tilde{\textbf{u}}^{n+1/2})+\delta}}(\tilde{\textbf{u}}^{n+1/2}\cdot \nabla\tilde{\textbf{u}}^{n+1/2},\textbf{u}^{n+1/2}), 
\label{e_model_semi2} \\ 
\nabla\cdot\textbf{u}^{n+1}=0, \label{e_model_semi3} 
\end{numcases}
 where 
 $\tilde{\textbf{u}}^{n+1/2}=(3\textbf{u}^{n}-\textbf{u}^{n-1})/2$ with $n\geq 1$ and we computer $\tilde{\textbf{u}}^{1/2}$ by the following simple first-order scheme:
 \begin{equation}\label{e_first order scheme}
 \aligned
 \frac{\tilde{\textbf{u}}^{1/2}-\textbf{u}^{0}}{\Delta t/2}+\textbf{u}^0\cdot \nabla\textbf{u}^0-\nu\Delta\tilde{\textbf{u}}^{1/2}+\nabla p^{1/2}=\textbf{f}^{1/2}.
  \endaligned
  \end{equation}
 which has a local truncation error of $O(\Delta t^2)$.
 
The above scheme enjoys the following stability result.
\begin{theorem}
Let $\textbf{f}\equiv 0.$ The scheme \eqref{e_model_semi1}-\eqref{e_model_semi2} is unconditionally energy stable in the sense that 
$$|q^{n+1}|^2-|q^n|^2=-\Delta t \nu \|\nabla \textbf{u}^{n+1/2}\|_{L^2}^2.$$
\end{theorem}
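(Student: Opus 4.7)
The plan is to combine the momentum equation \eqref{e_model_semi1} and the scalar update equation \eqref{e_model_semi2} with carefully chosen weights so that the explicit convective contributions cancel exactly. This cancellation is precisely the feature that the SAV formulation has been engineered to produce: the coupling factor $q^{n+1/2}/\sqrt{E(\tilde{\textbf{u}}^{n+1/2})+\delta}$ multiplying the convection in \eqref{e_model_semi1} is mirrored in \eqref{e_model_semi2}, so that when the two are tested against one another, the nonlinear terms drop out without ever appealing to skew-symmetry of the convective form.

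First I would take the $L^2$ inner product of \eqref{e_model_semi1} with $\Delta t\,\textbf{u}^{n+1/2}$. Integration by parts and the homogeneous Dirichlet boundary condition turn the viscous contribution into $\Delta t\,\nu\,\|\nabla\textbf{u}^{n+1/2}\|_{L^2}^2$. For the pressure term, I would note that $\nabla\cdot\textbf{u}^{n+1}=0$ by \eqref{e_model_semi3}, and, inductively, $\nabla\cdot\textbf{u}^n=0$, so that $\nabla\cdot\textbf{u}^{n+1/2}=0$ and hence $(\nabla p^{n+1/2},\textbf{u}^{n+1/2})=0$. This yields a clean identity expressing $(\textbf{u}^{n+1}-\textbf{u}^n,\textbf{u}^{n+1/2})$ plus the weighted convective inner product as equal to $-\Delta t\,\nu\,\|\nabla\textbf{u}^{n+1/2}\|_{L^2}^2$ (recall $\textbf{f}\equiv 0$).

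Next I would multiply \eqref{e_model_semi2} by $2\Delta t\,q^{n+1/2}$. The left-hand side becomes $2q^{n+1/2}(q^{n+1}-q^n)=|q^{n+1}|^2-|q^n|^2$ using the elementary identity $2a\cdot\frac{a+b}{2}\cdot\text{(difference)}$, or more directly $2\cdot\tfrac{q^{n+1}+q^n}{2}\cdot(q^{n+1}-q^n)$. The right-hand side becomes exactly the sum $(\textbf{u}^{n+1}-\textbf{u}^n,\textbf{u}^{n+1/2})+\frac{\Delta t\,q^{n+1/2}}{\sqrt{E(\tilde{\textbf{u}}^{n+1/2})+\delta}}(\tilde{\textbf{u}}^{n+1/2}\cdot\nabla\tilde{\textbf{u}}^{n+1/2},\textbf{u}^{n+1/2})$ that already appeared in the first step.

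Substituting the identity from the first step into this expression produces the claimed equality $|q^{n+1}|^2-|q^n|^2=-\Delta t\,\nu\,\|\nabla\textbf{u}^{n+1/2}\|_{L^2}^2$. No step is a real obstacle: the only delicate point is maintaining an inductive use of the discrete incompressibility condition so that the pressure truly drops out, and checking the bookkeeping of the factors $\Delta t$, $q^{n+1/2}$, and $\sqrt{E(\tilde{\textbf{u}}^{n+1/2})+\delta}$ so that the two convective contributions appear with identical signs and coefficients and thus cancel. It is worth emphasizing that no estimate is needed on $\tilde{\textbf{u}}^{n+1/2}\cdot\nabla\tilde{\textbf{u}}^{n+1/2}$ itself, which is why the scheme remains unconditionally energy stable despite treating the convection explicitly.
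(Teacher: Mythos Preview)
Your proposal is correct and follows essentially the same approach as the paper: test \eqref{e_model_semi1} with $\textbf{u}^{n+1/2}$, multiply \eqref{e_model_semi2} by $2q^{n+1/2}$, and observe that the convective contributions match exactly so that combining the two identities yields the stated energy law. One small remark: the paper's proof invokes the skew-symmetry identity $(\textbf{u}\cdot\nabla\textbf{v},\textbf{v})=0$, but, as you correctly emphasize, this is not actually needed here---the cancellation of the nonlinear terms is built into the SAV structure itself, and the only place divergence-freeness enters is to kill the pressure term.
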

\begin{proof}
We recall that for $\textbf{u}\in H:=\{\textbf{u}\in L^2(\Omega): \nabla\cdot \textbf{u}=0,\; \textbf{u}\cdot \textbf{n}|_{\partial\Omega}=0\}$, we have the identity
\begin{equation}
 (\textbf{u}\cdot \textbf{v}, \textbf{v})=0\quad \forall \textbf{v}\in H^1(\Omega).
\end{equation}
 Taking the inner products of \eqref{e_model_semi1} and \eqref{e_model_semi2} with $\textbf{u}^{n+1/2}$ and $2q^{n+1/2}$, respectively, and summing up the results and using the above identity, we obtain immediately the desired result.
 \end{proof}
 
 We now describe how to solve the semi-discrete-in-time scheme
 \eqref{e_model_semi1}-\eqref{e_model_semi3} efficiently.  Inspired by the work in \cite{lin2019numerical}, we denote
 \begin{equation}\label{e_notation}
\aligned
S^{n+1}=\frac{q^{n+1/2}}{\sqrt{E(\tilde{\textbf{u}}^{n+1/2})+\delta}},\ \ \textbf{u}^{n+1}=\hat{\textbf{u}}^{n+1}+S^{n+1}\check{\textbf{u}}^{n+1},\ \ p^{n+1}=\hat{p}^{n+1}-S^{n+1}\check{p}^{n+1}.
\endaligned
\end{equation}  
Plugging in the above in \eqref{e_model_semi1} and \eqref{e_model_semi3}, we find that 
  \begin{numcases}{}
   \frac{\hat{\textbf{u}}^{n+1}}{\Delta t}-\frac{\nu}{2}\Delta\hat{\textbf{u}}^{n+1}+\nabla \hat{p}^{n+1/2}= \textbf{f}^{n+1/2}+ \frac{\textbf{u}^{n}}{\Delta t}+\frac{\nu}{2}\Delta\textbf{u}^{n}, \label{e_semi_implementation_1}\\
\nabla\cdot\hat{\textbf{u}}^{n+1}=0, \label{e_semi_implementation_2} 
\end{numcases}
  \begin{numcases}{}
   \frac{\check{\textbf{u}}^{n+1}}{\Delta t}-\frac{\nu}{2}\Delta\check{\textbf{u}}^{n+1}-\nabla \check{p}^{n+1/2}=-\tilde{\textbf{u}}^{n+1/2}\cdot \nabla\tilde{\textbf{u}}^{n+1/2}, \label{e_semi_implementation_3}\\
\nabla\cdot\check{\textbf{u}}^{n+1}=0, \label{e_semi_implementation_4} 
\end{numcases}
which are linear systems that can be solved independent of $S^{n+1}$!

It remains to  determine $S^{n+1}$. Taking the inner product of \eqref{e_model_semi1} with $\textbf{u}^{n+1/2}$, we have
\begin{equation}\label{e_semi_implementation_5}
\aligned
&( \frac{\textbf{u}^{n+1}-\textbf{u}^{n}}{\Delta t},\textbf{u}^{n+1/2})+\nu\|\nabla \textbf{u}^{n+1/2}\|^2+S^{n+1}(\tilde{\textbf{u}}^{n+1/2}\cdot \nabla\tilde{\textbf{u}}^{n+1/2},\textbf{u}^{n+1/2})\\
&\ \ \ \ \ \ \ \ \ \ 
=(\textbf{f}^{n+1/2},\textbf{u}^{n+1/2}).
\endaligned
\end{equation}
Taking the inner product of \eqref{e_model_semi2} with $2q^{n+1/2}$ leads to
\begin{equation}\label{e_semi_implementation_6}
\aligned
\frac{(q^{n+1})^2-(q^n)^2}{\Delta t}=( \frac{\textbf{u}^{n+1}-\textbf{u}^{n}}{\Delta t},\textbf{u}^{n+1/2})+S^{n+1}(\tilde{\textbf{u}}^{n+1/2}\cdot \nabla\tilde{\textbf{u}}^{n+1/2},\textbf{u}^{n+1/2}).
\endaligned
\end{equation}
Combining \eqref{e_semi_implementation_5} with \eqref{e_semi_implementation_6} results in
\begin{equation}\label{e_semi_implementation_7}
\aligned
\frac{(q^{n+1})^2-(q^n)^2}{\Delta t}+\nu\|\nabla \textbf{u}^{n+1/2}\|^2=(\textbf{f}^{n+1/2},\textbf{u}^{n+1/2}).
\endaligned
\end{equation}
Recalling \eqref{e_notation}, we find that 
\begin{equation}\label{e_semi_implementation_8}
\aligned
X_{1,n+1}(S^{n+1})^2+X_{2,n+1}S^{n+1}+X_{3,n+1}=0,
\endaligned
\end{equation}
where
\begin{flalign*}
\begin{split}
\hspace{2mm}%
&X_{1,n+1}= \frac{4}{\Delta t}(E(\tilde{\textbf{u}}^{n+1/2})+\delta)+\frac{\nu}{4}\|\nabla\check{\textbf{u}}^{n+1}\|^2,\\
\hspace{2mm}%
&X_{2,n+1}=\frac{\nu}{2}(\nabla(\hat{\textbf{u}}^{n+1}+\textbf{u}^{n}),\nabla\check{\textbf{u}}^{n+1})-
\frac{4q^n}{\Delta t}\sqrt{E(\tilde{\textbf{u}}^{n+1/2})+\delta}-\frac{1}{2}(\textbf{f}^{n+1/2},\check{\textbf{u}}^{n+1}),\\
\hspace{2mm}%
&X_{3,n+1}=\frac{\nu}{4}\|\nabla(\hat{\textbf{u}}^{n+1}+\textbf{u}^n)\|^2-\frac{1}{2}(\textbf{f}^{n+1/2},\textbf{u}^{n}+\hat{\textbf{u}}^{n+1}).
\end{split}&
\end{flalign*}  
Noting that \eqref{e_semi_implementation_8} is a quadratic equation for $S^{n+1}$ which can be solved directly by using the quadratic formula. 
Once $S^{n+1}$ is known, we can  obtain $\textbf{u}^{n+1}$ and $p^{n+1/2}$ through \eqref{e_notation}. 

\begin{rem}\label{Remark 2.1}
 The nonlinear quadratic equation  \eqref{e_semi_implementation_8} has two solutions. Since the exact solution is 1, we should choose the root which is closer to 1. In fact, to make sure that equation \eqref{e_model_semi2} makes sense, i.e., $q^{n+1/2}\ne 0$, we need to fix a constant $\kappa\in (0,1)$ and choose a root satisfying $q^{n+1/2}\ge \kappa$. 
\end{rem}
 
  \subsection{Fully discrete case} 
  

We describe below the finite difference method on the staggered grids, i.e. the MAC scheme,  for the spacial discretization of \eqref{e_model_semi1}-\eqref{e_model_semi3}. 
To fix the idea, we consider a two-dimensional rectangular domain in $\mathbb{R}^2$, i.e.,  $\Omega=(L_{lx},L_{rx})\times(L_{ly},L_{ry})$. 
We  refer to Appendix A for detailed notations about the finite difference method on the staggered grids.

Given $\{\textbf{U}^k, P^k, Q^k\}_{k=0}^{n}$, the approximations to $\{\textbf{u}^k, p^k, q^k\}_{k=0}^{n}$. We find $\{\textbf{U}^{n+1}, P^{n+1}, Q^{n+1}\}$ such that
 \begin{eqnarray}\label{e_full_discrete}
&&d_tU_1^{n+1}+\frac{Q^{n+1/2}}{B^{n+1/2}}\left(\tilde{U}_1^{n+1/2}D_x(\mathcal{P}_h\tilde{U}_1^{n+1/2})+\mathcal{P}_h \tilde{U}_2^{n+1/2}d_y(\mathcal{P}_h \tilde{U}_1^{n+1/2})\right) \notag \\
&&\hskip 2cm -\nu D_x(d_xU_1)^{n+1/2}-\nu d_y(D_yU_1)^{n+1/2}+[D_xP]^{n+1/2}=f_{1}^{n+1/2}, \label{e_full_discrete1}\\
&&d_tU_2^{n+1}+\frac{Q^{n+1/2}}{B^{n+1/2}}\left(\mathcal{P}_h \tilde{U}_1^{n+1/2}d_x(\mathcal{P}_h \tilde{U}_2^{n+1/2})+\tilde{U}_2^{n+1/2}D_y(\mathcal{P}_h\tilde{U}_2^{n+1/2})\right)  \notag \\
&&\hskip 2cm -\nu D_y(d_yU_2)^{n+1/2}-\nu d_x(D_xU_2)^{n+1/2}+[D_yP]^{n+1/2}=f_2^{n+1/2}, 
\label{e_full_discrete2}\\
&&d_tQ^{n+1}=\frac{1}{2B^{n+1/2}}(\mathcal{P}_h\tilde{\textbf{U}}^{n+1/2}\cdot\nabla_h(\mathcal{P}_h\tilde{\textbf{U}}^{n+1/2}),\textbf{U}^{n+1/2})_{l^2}\notag\\
&&\hskip 2cm +\frac{1}{2Q^{n+1/2}}(d_t\textbf{U}^{n+1},\textbf{U}^{n+1/2})_{l^2}, \label{e_full_discrete3}\\
&&d_xU_1^{n+1}+d_yU_2^{n+1}=0, \label{e_full_discrete4}
\end{eqnarray}
with  the boundary and initial conditions
\begin{eqnarray}\label{e_boundary and initial condition}
 \left\{
 \begin{array}{lll}
 \displaystyle U_{1,0,j+1/2}^{n}=U_{1,N_x,j+1/2}^{n}=0,& 0\leq j\leq N_y-1,\\
 \displaystyle U_{1,i,0}^{n}=U_{1,i,N_y}^{n}=0,& 0\leq i\leq N_x,\\
 \displaystyle U_{2,0,j}^{n}=U_{2,N_x,j}^{n}=0,  &0\leq j\leq N_y,\\
   \displaystyle U_{2,i+1/2,0}^{n}=U_{2,i+1/2,N_y}^{n}=0,& 0\leq i\leq N_x-1,\\
  \displaystyle U_{1,i,j+1/2}^{0}=u^0_{1,i,j+1/2}, &0\leq i\leq N_x,0\leq j\leq N_y,\\
  \displaystyle U_{2,i+1/2,j}^{0}=u^0_{2,i+1/2,j}, &0\leq i\leq N_x,0\leq j\leq N_y,
  \end{array}
  \right.
 \end{eqnarray}
 where $\textbf{u}^0=(u^0_1,u^0_2)$ is the initial condition. 
 In the above $B^{n+1/2}=\sqrt{E_h(\tilde{\textbf{U}}^{n+1/2})+\delta}$ with $E_h(\tilde{\textbf{U}}^{n+1/2})=\frac{1}{2}\|\tilde{\textbf{U}}^{n+1/2}\|_{l^2}^2,$ and 
 \begin{equation*}
 \aligned
(\mathcal{P}_h\tilde{\textbf{U}}^{n+1/2}&\cdot\nabla_h(\mathcal{P}_h\tilde{\textbf{U}}^{n+1/2}),\textbf{U}^{n+1/2})_{l^2}\\
=&\left(\tilde{U}_1^{n+1/2}D_x(\mathcal{P}_h\tilde{U}_1^{n+1/2})+\mathcal{P}_h \tilde{U}_2^{n+1/2}d_y(\mathcal{P}_h \tilde{U}_1^{n+1/2}), U_1^{n+1/2}\right)_{l^2,T,M}\\
&+\left(\mathcal{P}_h \tilde{U}_1^{n+1/2}d_x(\mathcal{P}_h \tilde{U}_2^{n+1/2})+\tilde{U}_2^{n+1/2}D_y(\mathcal{P}_h\tilde{U}_2^{n+1/2}),U_2^{n+1/2}\right)_{l^2,M,T},
  \endaligned
  \end{equation*}
here $\mathcal{P}_h$ is the bilinear interpolation operator. 
\

Note that the above can be efficiently solved using exactly the same procedure as in the semi-discrete case for \eqref{e_model_semi1}-\eqref{e_model_semi3}. We leave the detail to the interested readers.
In particular, $Q^{n+1}$ is determined by a quadratic algebraic equation which has two solutions. So as in the semi-discrete case Remark \ref{Remark 2.1}, we should only be concerned with the roots satisfying
\begin{equation}\label{Q_below bound}
\aligned
|Q^{n+1/2}|>\kappa
\endaligned
\end{equation} 
for a given $\kappa \in (0,1)$ and choose the root which is closer to the exact solution 1. 

  \subsection{Energy Stability} 
In this section, we will demonstrate that the second order full discrete scheme \eqref{e_full_discrete1}-\eqref{e_full_discrete4} is unconditionally energy stable. The energy stability of the semi-discrete scheme \eqref{e_model_semi1}-\eqref{e_model_semi3} can be established similarly.
 \medskip
 
 \begin{theorem}\label{thm_discrete total energy}
In the absence of the external force $\textbf{f}$, the scheme \eqref{e_full_discrete1}-\eqref{e_full_discrete4} is unconditionally stable and the following discrete energy law holds for any $\Delta t$:
\begin{equation}\label{e_discretization of energy decay}
\aligned
|Q^{n+1}|^2-|Q^{n}|^2=-\nu\Delta t \|D\textbf{U}^{n+1/2}\|^2, \ \ \forall n\geq 0.
\endaligned
\end{equation} 
\end{theorem}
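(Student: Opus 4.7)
The plan is to mirror the semi-discrete proof already carried out in this section, but perform every inner product in the appropriate discrete MAC norms. First I would take the discrete $l^2$ inner products of \eqref{e_full_discrete1} with $U_1^{n+1/2}$ on the grid used for the first velocity component, and of \eqref{e_full_discrete2} with $U_2^{n+1/2}$ on the grid used for the second component, and add them. The right-hand side is zero by assumption. This produces four contributions on the left: the time-derivative piece $(d_t\textbf{U}^{n+1},\textbf{U}^{n+1/2})_{l^2}$, the nonlinear piece $\frac{Q^{n+1/2}}{B^{n+1/2}}(\mathcal{P}_h\tilde{\textbf{U}}^{n+1/2}\cdot\nabla_h(\mathcal{P}_h\tilde{\textbf{U}}^{n+1/2}),\textbf{U}^{n+1/2})_{l^2}$ that by construction is exactly the expression that appears in \eqref{e_full_discrete3}, the discrete viscous piece, and the discrete pressure-gradient piece.

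Next, I would apply discrete summation by parts on the staggered MAC grid to rewrite the viscous contribution as $\nu\|D\textbf{U}^{n+1/2}\|^2$, using the identities relating $D_x,d_x,D_y,d_y$ under the homogeneous boundary conditions in \eqref{e_boundary and initial condition}. The same summation-by-parts turns the pressure contribution into $-(P^{n+1/2},d_xU_1^{n+1/2}+d_yU_2^{n+1/2})_{l^2,M}$, which vanishes by the discrete divergence-free condition \eqref{e_full_discrete4}. This is the discrete analogue of the continuous identity $(\nabla p,\textbf{u})=0$ for divergence-free $\textbf{u}$ that was used in the semi-discrete proof.

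Finally, I would multiply \eqref{e_full_discrete3} by $2Q^{n+1/2}\Delta t$. Using $2Q^{n+1/2}(Q^{n+1}-Q^{n})=(Q^{n+1})^2-(Q^{n})^2$, the left side becomes $(Q^{n+1})^2-(Q^{n})^2$, and the right side becomes exactly $\Delta t\bigl[\frac{Q^{n+1/2}}{B^{n+1/2}}(\mathcal{P}_h\tilde{\textbf{U}}^{n+1/2}\cdot\nabla_h(\mathcal{P}_h\tilde{\textbf{U}}^{n+1/2}),\textbf{U}^{n+1/2})_{l^2}+(d_t\textbf{U}^{n+1},\textbf{U}^{n+1/2})_{l^2}\bigr]$. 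Substituting the balance derived from the momentum equations in the previous step gives $(Q^{n+1})^2-(Q^{n})^2=-\nu\Delta t\|D\textbf{U}^{n+1/2}\|^2$, which is \eqref{e_discretization of energy decay}.

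The crux, and the only non-routine point, is verifying that the discrete nonlinear quantity obtained by testing the momentum equations with $\textbf{U}^{n+1/2}$ coincides verbatim with the nonlinear term built into \eqref{e_full_discrete3}; this is precisely why the scheme uses the bilinear interpolation $\mathcal{P}_h$ together with the alternating $D$/$d$ difference operators on the staggered grids, and why the long identity displayed just above \eqref{Q_below bound} was introduced. Once that matching and the vanishing of the pressure term via discrete summation by parts are in place, the nonlinear contributions cancel and the remainder of the argument is purely algebraic, exactly as in the semi-discrete proof.
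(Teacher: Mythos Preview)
Your proposal is correct and follows essentially the same route as the paper: test \eqref{e_full_discrete1}--\eqref{e_full_discrete2} with $\textbf{U}^{n+1/2}$, apply the summation-by-parts identity (Lemma \ref{lemma:U-P-Relation}) to produce $\nu\|D\textbf{U}^{n+1/2}\|^2$ and to shift the pressure term onto the discrete divergence, kill the pressure via \eqref{e_full_discrete4}, multiply \eqref{e_full_discrete3} by $2Q^{n+1/2}$, and cancel the matching nonlinear and time-derivative pieces. The only cosmetic difference is that the paper keeps $\textbf{f}$ in the identity \eqref{e_Stability4} and sets it to zero at the end, whereas you drop it at the outset.
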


\begin{proof}  
Multiplying (\ref{e_full_discrete1}) by $U_{1,i,j+1/2}^{n+1/2}hk$, making summation on $i,j$ for $1\leq i\leq N_x-1,\ 0\leq j\leq N_y-1$, and recalling Lemma \ref{lemma:U-P-Relation}, we have 
\begin{equation}\label{e_Stability1}
\aligned
&(d_tU_1^{n+1},U_1^{n+1/2})_{l^2,T,M}
+\nu\|d_x U^{n+1/2}_1\|^2_{l^2,M}
+\nu\|D_yU^{n+1/2}_1\|^2_{l^2,T_y}\\
&+\frac{Q^{n+1/2}}{B^{n+1/2}}\left(\tilde{U}_1^{n+1/2}D_x(\mathcal{P}_h\tilde{U}_1^{n+1/2})+\mathcal{P}_h \tilde{U}_2^{n+1/2}d_y(\mathcal{P}_h \tilde{U}_1^{n+1/2}), U_1^{n+1/2}\right)_{l^2,T,M}\\
&-(P^{n+1/2},d_xU^{n+1/2}_1)_{l^2,M}
=(f_1^{n+1/2},U_1^{n+1/2})_{l^2,T,M}.
\endaligned
\end{equation} 

Similarly multiplying (\ref{e_full_discrete2}) by $U_{2,i+1/2,j}^{n+1/2}hk$, and making summation on $i,j$ for $0\leq i\leq N_x-1,\ 1\leq j\leq N_y-1$, we can obtain
\begin{equation}\label{e_Stability2}
\aligned
&(d_tU_2^{n+1},U_2^{n+1/2})_{l^2,M,T}+\nu\|d_y U^{n+1/2}_2\|^2_{l^2,M}+\nu\|D_xU^{n+1/2}_2\|^2_{l^2,T_x}\\
&+\frac{Q^{n+1/2}}{B^{n+1/2}}\left(\mathcal{P}_h \tilde{U}_1^{n+1/2}d_x(\mathcal{P}_h \tilde{U}_2^{n+1/2})+\tilde{U}_2^{n+1/2}D_y(\mathcal{P}_h\tilde{U}_2^{n+1/2}),U_2^{n+1/2}\right)_{l^2,M,T}\\
&-(P^{n+1/2},d_yU^{n+1/2}_2)_{l^2,M}=(f_2^{n+1/2},U_2^{n+1/2})_{l^2,M,T}.
\endaligned
\end{equation} 

Multiplying \eqref{e_full_discrete3} by $2Q^{n+1/2}$ yields
\begin{equation}\label{e_Stability3}
\aligned
&\frac{1}{\Delta t}(|Q^{n+1}|^2-|Q^n|^2)
=\frac{Q^{n+1/2}}{B^{n+1/2}}(\mathcal{P}_h\tilde{\textbf{U}}^{n+1/2}\cdot\nabla_h\mathcal{P}_h\tilde{\textbf{U}}^{n+1/2},\textbf{U}^{n+1/2})_{l^2}\\
&\ \ \ \ \ \ \ \ \ 
+(d_t\textbf{U}^{n+1},\textbf{U}^{n+1/2})_{l^2}.
\endaligned
\end{equation}

Combining \eqref{e_Stability3} with \eqref{e_Stability1} and \eqref{e_Stability2} and 
taking notice of \eqref{e_full_discrete4} lead to
\begin{equation}\label{e_Stability4}
\aligned
|Q^{n+1}|^2&-|Q^n|^2+\nu\Delta t\|D\textbf{U}^{n+1/2}\|^2\\
=&\Delta t(f_1^{n+1/2},U_1^{n+1/2})_{l^2,T,M}+\Delta t(f_2^{n+1/2},U_2^{n+1/2})_{l^2,M,T}.
\endaligned
\end{equation}

which implies the desired result (\ref{e_discretization of energy decay}).   
\end{proof}

 \section{Error estimates} 
In this section we carry out a rigorous error analysis  for the fully discrete scheme \eqref{e_full_discrete1}-\eqref{e_full_discrete4}. More precisely, we shall prove the following main result:
In what follows, $(\textbf{u}^n,p^n,q^n)$ represents the exact solution of \eqref{e_model_transform1}-\eqref{e_model_transform3} at time $t^n$. 

\begin{theorem}\label{thm: error_estimate}
Assume that the exact solution $(\textbf{u},p)$ of \eqref{e_model_transform1}-\eqref{e_model_transform3} is sufficiently smooth such that $\textbf{u}\in W^{3}_{\infty}(J;W^{4}_{\infty}(\Omega))^2$, $p\in W^{3}_{\infty}(J;W^{3}_{\infty}(\Omega))$, denote
$(\textbf{u}^n,p^n,q^n)=(\textbf{u}(t^n),p(t^n),q(t^n))$, where  $q$ is defined in \eqref{e_definition of q}.
   Then for the fully discrete scheme \eqref{e_full_discrete1}-\eqref{e_full_discrete4} satisfying \eqref{Q_below bound} for  given $\kappa\in (0,1)$, we have the following error estimates:
   \begin{equation}\label{e_thm_error1}
\aligned
\|d_x(U_1^m-u_1^m)\|_{l^2,M}+\|d_y(U_2^m-u_2^m)\|_{l^2,M}
\leq C(\Delta t^2+h^2+k^2), \quad   \ m\leq N,
\endaligned
\end{equation}
\begin{equation}\label{e_thm_error2}
\aligned
&\|\textbf{U}^m-\textbf{u}^m\|_{l^2}+\left(\sum\limits_{l=1}^{m}\Delta t\|P^{l-1/2}-p^{l-1/2}\|^2_{l^2,M}\right)^{1/2}+|Q^m-q^m| \\
& \ \ \ \ \ \ \ \ 
\leq C(\Delta t^2+h^2+k^2),\quad   \ m\leq N,
\endaligned
\end{equation}
\begin{equation}\label{e_thm_error3}
\aligned
& \|D_y(U_1^m-u_1^m)\|_{l^2,T_y}\leq C(\Delta t^2+h^2+k^{3/2}),\quad   \ m\leq N,
\endaligned
\end{equation}
\begin{equation}\label{e_thm_error4}
\aligned
&\|D_x(U_2^m-u_2^m)\|_{l^2,T_x}\leq C(\Delta t^2+h^{3/2}+k^2),\quad   \ m\leq N.
\endaligned
\end{equation}
where the positive constant $C$ is independent of $h$, $k$ and $\Delta t$.
\end{theorem}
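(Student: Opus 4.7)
The plan is to proceed by induction on $n$, combining an energy-type estimate for the velocity/pressure/$q$ error with a bootstrap argument that provides a uniform discrete $L^\infty$ bound on the numerical velocity. First I would introduce the reference ratio $s^{n+1/2}=q^{n+1/2}/\sqrt{E(\tilde{\textbf{u}}^{n+1/2})+\delta}$, which equals $1$ up to an $O(\Delta t^2)$ quantity for the exact solution, together with the error quantities $e_{\textbf{U}}^n=\textbf{U}^n-\textbf{u}^n$, $e_P^n=P^n-p^n$, $e_Q^n=Q^n-q^n$, and $e_S^{n+1}=S^{n+1}-s^{n+1/2}$. Subtracting \eqref{e_full_discrete1}--\eqref{e_full_discrete4} from \eqref{e_model_transform1}--\eqref{e_model_transform3} evaluated at $t^{n+1/2}$ (in the MAC sense) produces error equations whose truncation remainders, thanks to the $W^3_\infty$-smoothness of $\textbf{u},p$, the Crank--Nicolson/extrapolation accuracy, and the standard MAC consistency on staggered meshes, are uniformly $O(\Delta t^2+h^2+k^2)$.

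Next I would replay the stability proof of Theorem \ref{thm_discrete total energy} at the error level: test the two momentum error equations against $e_{\textbf{U}}^{n+1/2}$, the $q$ error equation against $2e_Q^{n+1/2}$, sum the results, and use the discrete divergence-free relation to eliminate $e_P^{n+1/2}$ at leading order. The crucial cancellation is between the convective contribution from the momentum test and the nonlinear term from the $q$ test, which by the SAV construction reproduces, up to consistency, the identity $(\textbf{v}\cdot\nabla_h \textbf{v},\textbf{v})_{l^2}=0$ on the discrete divergence-free space. The residual nonlinear contributions split into (i) terms proportional to $|e_S^{n+1}|$, which can be bounded by $C(\|e_{\textbf{U}}^{n+1/2}\|_{l^2}^2+|e_Q^{n+1/2}|^2+$ consistency$)$ after expressing $e_S^{n+1}$ in terms of $e_Q^{n+1/2}$ and $e_{\textbf{U}}^{n}, e_{\textbf{U}}^{n-1}$ via a Taylor expansion, and (ii) bilinear terms in the extrapolation error, which are controlled via an $\|\tilde{\textbf{u}}\|_\infty$ factor and the inductive uniform bound on $\|\textbf{U}^k\|_\infty$. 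A discrete Gronwall inequality then yields \eqref{e_thm_error2} for the velocity and $|e_Q^m|$, together with $\sum_{l=1}^m\Delta t\|De_{\textbf{U}}^{l-1/2}\|^2\le C(\Delta t^2+h^2+k^2)^2$.

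The pressure part of \eqref{e_thm_error2} follows from the discrete LBB inequality of Rui--Li applied to the momentum error equation solved for the discrete gradient of $e_P^{n+1/2}$; the term $d_t e_{\textbf{U}}^{n+1}$ is treated in a negative norm by pairing it against the discrete divergence-free lift from the inf-sup condition, after which all right-hand side pieces are already controlled. The superconvergence estimates \eqref{e_thm_error1}, \eqref{e_thm_error3}, \eqref{e_thm_error4} would be obtained by a second energy argument: testing the momentum error equations against $-D_x(d_x e_{U_1})^{n+1/2}-d_y(D_y e_{U_1})^{n+1/2}$ and its $U_2$ analogue, and exploiting the MAC duality relations collected in Appendix A (in particular the half-power gain in the cross direction between centered and staggered nodes) to upgrade the cross-derivative norms from $O(h+k)$ to the stated $O(h^{3/2})$ or $O(k^{3/2})$.

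The main obstacle is the bootstrap that upgrades the $l^2$ velocity bound to the uniform $L^\infty$ bound needed to (a) close the nonlinear estimates and (b) guarantee $|Q^{n+1/2}|\ge\kappa$ so that the correct root of the quadratic \eqref{e_semi_implementation_8} exists and is selected. I would posit the inductive hypothesis $\|\textbf{U}^k\|_\infty\le\|\textbf{u}\|_{L^\infty(L^\infty)}+1$ for all $k\le n$, carry out the energy estimate above to obtain $\|e_{\textbf{U}}^{n+1}\|_{l^2}\le C_\star(\Delta t^2+h^2+k^2)$ with $C_\star$ independent of $n$, and then invoke an inverse inequality $\|e_{\textbf{U}}^{n+1}\|_\infty\le Ch^{-d/2}\|e_{\textbf{U}}^{n+1}\|_{l^2}$ to conclude $\|\textbf{U}^{n+1}\|_\infty\le\|\textbf{u}\|_{L^\infty(L^\infty)}+1$ under the mild resolution assumption $\Delta t^2+h^2+k^2\le c\,h^{d/2}$, which is not a CFL constraint. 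The same $l^2$ bound on $e_Q^{n+1/2}$ together with $q^{n+1/2}\ge\sqrt{\delta}$ shows that the chosen root of the quadratic equation satisfies \eqref{Q_below bound} automatically for $\Delta t, h, k$ small enough, closing the induction and giving the stated estimates for all $m\le N$.
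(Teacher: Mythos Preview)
Your outline captures the SAV cancellation and the bootstrap/induction correctly, and the use of the discrete LBB condition for the pressure is the right idea. But there are two substantive gaps relative to what is actually needed.

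\medskip
\textbf{Missing control of $\|d_t e_{\textbf U}^{n+1}\|_{l^2}$.} When you test the $q$-error equation against $2e_Q^{n+1/2}$, the term $\frac{1}{2Q^{n+1/2}}(d_t\textbf U^{n+1},\textbf U^{n+1/2})$ in \eqref{e_full_discrete3} does \emph{not} cancel against anything from the momentum test. Subtracting the continuous analogue produces a contribution of the form
\[
\frac{e_Q^{n+1/2}}{Q^{n+1/2}}\bigl(d_t e_{\textbf U}^{n+1},\textbf U^{n+1/2}\bigr)_{l^2},
\]
which after Young's inequality leaves $\tfrac12\|d_t e_{\textbf U}^{n+1}\|_{l^2}^2$ on the right-hand side (cf.\ the paper's $S_3$-term in \eqref{e_lem_error23}). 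This cannot be absorbed by the energy identity for $\|e_{\textbf U}^{n+1}\|_{l^2}^2$ alone. The paper closes this by an additional test of the momentum error equation against $d_t e_{\textbf U}^{n+1}$ (Lemma~\ref{lem: error_estimate_dtu}), which generates $\sum_n\Delta t\|d_t e_{\textbf U}^{n+1}\|_{l^2}^2$ on the left and is then combined with the first estimate before applying Gronwall. The same quantity is what makes the LBB pressure bound work: in \eqref{e_errorp_4} the pressure is bounded by $C\|d_t e_{\textbf U}^{n+1}\|_{l^2}+\cdots$, not by a negative-norm pairing as you propose (the inf--sup lift $\textbf v$ is not discretely divergence-free, so the pairing $(d_t e_{\textbf U}^{n+1},\textbf v)$ does not simplify).

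\medskip
\textbf{Missing auxiliary linear MAC problem.} The estimates \eqref{e_thm_error1}, \eqref{e_thm_error3}, \eqref{e_thm_error4} are MAC superconvergence statements; the paper obtains them by splitting $e_{\textbf U}^n=(\textbf U^n-\textbf W^n)+(\textbf W^n-\textbf u^n)=\bm\xi^n+\bm\gamma^n$, where $(\textbf W,H)$ solves the \emph{linear} time-dependent Stokes MAC scheme \eqref{e36}--\eqref{e38}. All of the delicate $O(h^2+k^2)$ and $O(h^{3/2})$/$O(k^{3/2})$ rates are inherited from Lemma~\ref{le_auxiliary} for $\bm\gamma^n$; the nonlinear analysis then only needs $\|\bm\xi^{m+1}\|_{l^2}+\|D\bm\xi^{m+1}\|\le C(\Delta t^2+h^2+k^2)$, which comes out of the energy/Gronwall loop. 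Your proposal to recover these rates by testing the full error equation against the discrete Laplacian of $e_{\textbf U}$ would require discrete $H^2$-regularity for the MAC Stokes operator and a careful boundary treatment to get the half-power cross-direction gain; this is neither standard nor what the paper does, and as written it is a gap rather than an alternative proof.
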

\begin{rem}
 The above error estimates show in particular that at least one root of the nonlinear algebraic equation \eqref{e_semi_implementation_8} will converge to the exact solution $\frac{ q(t)}{\sqrt{E(\textbf{u})+\delta}} \equiv 1$.  The numerical result presented in Fig. \ref{fig: S} clearly verifies this assertion.
\end{rem}

We shall prove the above results through a sequence of intermediate estimates below.

\subsection{An auxiliary problem}
 We consider first an auxiliary problem which will be used in the sequel.  

 Set $\textbf{g}=\textbf{f}-\textbf{u}\cdot \nabla\textbf{u}$.
 We recast \eqref{e_model} as
  \begin{subequations}\label{e_auxiliary}
    \begin{align}
   \frac{\partial \textbf{u}}{\partial t}-\nu\Delta\textbf{u}+\nabla p=\textbf{g}
     \quad &\ in\ \Omega\times J,
      \label{e_auxiliaryA}\\
      \nabla\cdot\textbf{u}=0
      \quad &\ in\ \Omega\times J,
      \label{_auxiliaryB}
    \end{align}
  \end{subequations}
and consider its approximation by the MAC scheme:
 For each $n=0,\ldots,N-1$, let $\{W^{n+1}_{1,i,j+1/2}\}$, $\{W_{2,i+1/2,j}^{n+1}\}$ and $\{H^{n+1}_{i+1/2,j+1/2}\}$ be such that
\begin{align}
&d_tW_{1,i,j+1/2}^{n+1/2}-\nu D_x(d_xW_1)^{n+1/2}_{i,j+1/2}-\nu d_y(D_yW_1)^{n+1/2}_{i,j+1/2}+[D_xH]^{n+1/2}_{i,j+1/2} \notag  \\
& \ \ \ \ \ \ \ \ \ \ \ \ \ 
=g_{1,i,j+1/2}^{n+1/2},\ \ 1\leq i\leq N_x-1,0\leq j\leq N_y-1,\label{e36}\\
&d_tW_{2,i+1/2,j}^{n+1/2}-\nu D_y(d_yW_2)^{n+1/2}_{i+1/2,j}-\nu d_x(D_xW_2)^{n+1/2}_{i+1/2,j}
+D_yH_{i+1/2,j}^{n+1/2}\notag  \\
& \ \ \ \ \ \ \ \ \ \ \ \ \ 
=g_{2,i+1/2,j}^{n+1/2},\ \ 0\leq i\leq N_x-1,1\leq j\leq N_y-1,\label{e37}\\
&d_xW^{n+1/2}_{1,i+1/2,j+1/2}+d_yW^{n+1/2}_{2,i+1/2,j+1/2}=0,\ \ 0\leq i\leq N_x-1,0\leq j\leq N_y-1,\label{e38}
\end{align}
where the boundary and initial approximations are same as \eqref{e_boundary and initial condition}.

By following closely the same arguments as in \cite{rui2017stability,li2018stability}, we can prove the following:
\begin{lemma}\label{le_auxiliary} 
Assuming that $\textbf{u}\in W^{3}_{\infty}(J;W^{4}_{\infty}(\Omega))^2$, $p\in W^{3}_{\infty}(J;W^{3}_{\infty}(\Omega))$, we have the following results:
\begin{equation}\label{e127}
\aligned
\|d_x(W^{n+1}_1-{u}^{n+1}_1)\|_{l^2,M}+\|d_y(W^{n+1}_2-{u}^{n+1}_2)\|_{l^2,M}
\leq
O(\Delta t^2+h^2+k^2),
\endaligned
\end{equation}
\begin{equation}\label{e39}
\aligned
\left(\sum\limits_{l=0}^{n}\Delta t(\|d_t(\textbf{W}^{l+1}-\textbf{u}^{l+1})\|_{l^2}^2\right)^{1/2}+\|\textbf{W}^{n+1}-\textbf{u}^{n+1}\|_{l^2}
\leq O(\Delta t^2+h^2+k^2),
\endaligned
\end{equation}

\begin{equation}\label{e132}
\aligned
& \|D_y(W^{n+1}_1-{u}^{n+1}_1)\|_{l^2,T_y}\leq O(\Delta t^2+h^2+k^{3/2}),
\endaligned
\end{equation}
\begin{equation}\label{e134}
\aligned
&\|D_x(W^{n+1}_2-{u}^{n+1}_2)\|_{l^2,T_x}\leq O(\Delta t^2+h^{3/2}+k^2),
\endaligned
\end{equation}

\begin{equation}\label{e131}
\aligned
&\left(\sum\limits_{l=1}^{N}\Delta t\|(H-p)^{l-1/2}\|^2_{l^2,M}\right)^{1/2}\leq O(\Delta t^2+h^2+k^2).
\endaligned
\end{equation}
\end{lemma}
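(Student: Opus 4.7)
The plan is to follow the framework developed by Rui and Li in \cite{rui2017stability,li2018stability} for MAC discretizations of steady and unsteady Stokes systems, adapted to the Crank--Nicolson time stepping used here. Since \eqref{e_auxiliary} is the unsteady Stokes system with a given right-hand side $\textbf{g}$, the scheme \eqref{e36}--\eqref{e38} is linear, and there is no interaction with an auxiliary variable to worry about. All estimates therefore reduce to a standard MAC error analysis, with the only additional care needed to match the sharp second-order-in-time truncation coming from symmetry at $t^{n+1/2}$.

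First I would introduce staggered interpolants $\Pi_h^1 u_1$, $\Pi_h^2 u_2$ on the velocity edges and $\mathcal{Q}_h p$ on the cell centers (pointwise restriction suffices), set $\theta_1 = W_1 - \Pi_h^1 u_1$, $\theta_2 = W_2 - \Pi_h^2 u_2$, $\eta = H - \mathcal{Q}_h p$, and subtract \eqref{e36}--\eqref{e38} from the continuous equations evaluated at $t^{n+1/2}$. Taylor expansion on the MAC mesh yields a truncation error $\tau^{n+1/2}$ of size $O(\Delta t^2 + h^2 + k^2)$: Crank--Nicolson symmetry annihilates the odd time derivatives, and the combinations $D_x d_x + d_y D_y$ and $D_x H$ approximate $\Delta$ and $\partial_x p$ to second order at face centers precisely because of the MAC staggering.

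Next I would perform the discrete energy estimate. Testing the $\theta_1$-equation by $\theta_1^{n+1/2}$, the $\theta_2$-equation by $\theta_2^{n+1/2}$, and summing, the discrete divergence identity \eqref{e38} combined with the summation-by-parts rules in Appendix A eliminates the pressure contribution and leaves
\[
\|\theta^{n+1}\|_{l^2}^2 - \|\theta^{n}\|_{l^2}^2 + \nu\Delta t\,\|D\theta^{n+1/2}\|^2 \le C\Delta t\,\|\tau^{n+1/2}\|_{l^2}^2 + C\Delta t\,\|\theta^{n+1/2}\|_{l^2}^2,
\]
whence discrete Gronwall delivers \eqref{e39} and, after the triangle inequality with the interpolation error, \eqref{e127}. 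For the pressure bound \eqref{e131} I would invoke the discrete inf-sup (LBB) condition established in \cite{rui2017stability}: it produces, at each time level, a discrete velocity $\textbf{v}_h$ with $\nabla_h\cdot\textbf{v}_h = \eta^{n+1/2}$ and $\|D\textbf{v}_h\|\lesssim \|\eta^{n+1/2}\|_{l^2,M}$; testing the momentum error equation against $\textbf{v}_h$ and using the already-established bounds on $\|d_t\theta\|$ and $\|D\theta\|$ gives the claimed $\ell^2(0,T;L^2)$ control of $\eta$.

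The main obstacle is the asymmetric estimates \eqref{e132} and \eqref{e134}, where the spatial order drops from $k^2$ to $k^{3/2}$ (respectively from $h^2$ to $h^{3/2}$). They arise because $D_y W_1$ lives on the corner grid $T_y$, not on the natural $x$-edge grid of $W_1$, and $W_1$ is prescribed to vanish only on the top and bottom boundaries, so a boundary-layer contribution enters the consistency error of $D_y\Pi_h^1 u_1$. I would handle this by decomposing $D_y\theta_1$ into an interior piece, controlled by the $O(k^2)$ estimate on $d_y\theta_1$ already in hand, plus a boundary trace piece that, through a discrete trace inequality of the form $\|\cdot\|_{l^2,\partial}\lesssim k^{-1/2}\|\cdot\|_{l^2}$ applied to a smoothed extension, is bounded by $O(k^{3/2})$; \eqref{e134} is obtained by the symmetric argument. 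This is exactly the mechanism carried out in \cite{rui2017stability,li2018stability}, and since at each time step the spatial operators coincide with the steady Stokes case, the argument transfers verbatim to \eqref{e36}--\eqref{e38}.
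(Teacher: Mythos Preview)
Your proposal is correct and aligns with the paper's own treatment: the paper does not give a detailed proof of this lemma at all, stating only that it follows ``by following closely the same arguments as in \cite{rui2017stability,li2018stability}.'' Your outline---staggered interpolants, Crank--Nicolson truncation, energy estimate plus discrete LBB for the pressure, and the boundary-layer mechanism for the $k^{3/2}$ and $h^{3/2}$ losses on the corner grids---is precisely the content of those references adapted to the present time stepping, so you have in fact supplied more than the paper does.
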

\medskip

\subsection{Discrete LBB condition}
In order to carry out error analysis, we need the discrete LBB condition.

 Here we use the same notation and results as Rui and Li \cite[Lemma 3.3]{rui2017stability}.  
 Let $$b(\textbf{v},q)=-\int_\Omega ~q\,\nabla\cdot \textbf{v}dx,~\textbf{v}\in \textbf{V},~q\in W,$$ where
\begin{align*}
&\textbf{V}=H^1_0(\Omega)\times H^1_0(\Omega),
\quad W=\left\{q\in L^2(\Omega): \int_\Omega qdx=0\right\}.
\end{align*}

We construct the finite-dimensional subspaces of $W$ and $\textbf{V}$ by introducing three different partitions $\mathcal{T}_h,\mathcal{T}_h^1,\mathcal{T}_h^2$ of $\Omega$.
The original partition $\delta_x\times \delta_y$ is denoted by $\mathcal{T}_h$. The partition $\mathcal{T}_h^1$ is generated by connecting all the midpoints of the vertical sides of $\Omega_{i+1/2,j+1/2}$ and extending the resulting mesh to the boundary $\Gamma$. Similarly, for all $\Omega_{i+1/2,j+1/2}\in \mathcal{T}_h$ we connect all the midpoints of the horizontal sides of $\Omega_{i+1/2,j+1/2}$ and extend
the resulting mesh to the boundary $\Gamma$, then the third partition is obtained which is denoted by $\mathcal{T}_h^2$.

Corresponding to the  quadrangulation $\mathcal{T}_h$,
define $W_h$, a subspace of $W$,
 $$W_h=\left\{q_h:~q_h|_T=\text{constant},~\forall T\in \mathcal{T}_h~and \int_\Omega qdx=0\right\}.$$
  Furthermore, let $\textbf{V}_h$ be a subspace of $\textbf{V}$ such that $\textbf{V}_h$=$S_h^1\times S_h^2$, where
\begin{align*}
&S_h^l=\left\{g\in C^{(0)}(\overline{\Omega}):~g|_{T^l}\in Q_1(T^l),~,\forall T^l\in \mathcal{T}_h^l,~and~g|_\Gamma=0\right\},~l=1,2,
\end{align*}
and $Q_1$ denotes the space of all polynomials of degree $\leq 1$ with respect to each of the two variables $x$ and $y$.

We introduce the bilinear forms
$$b_h(\textbf{v}_h,q_h)=-\sum_{\Omega_{i+1/2,j+1/2}\in \mathcal{T}_h}\int_{\Omega_{i+1/2,j+1/2}} q_h \Pi_h(\nabla\cdot\textbf{v}_h)dx,~\textbf{v}_h\in \textbf{V}_h,~q_h\in W_h,$$
where
\begin{align*}
\Pi_h:~&C^{(0)}(\overline{\Omega}_{i+1/2,j+1/2})\rightarrow Q_0(\Omega_{i+1/2,j+1/2}), ~such~that\\
&(\Pi_h\varphi)_{i+1/2,j+1/2}=\varphi_{i+1/2,j+1/2},~~\forall ~\Omega_{i+1/2,j+1/2}\in \mathcal{T}_h.
\end{align*}
 Then, we have the following result \cite{rui2017stability}: 
\begin{lemma}\label{le_LBB}
There is a constant $\beta>0$, independent of $h$ and $k$ such that
\begin{equation}\label{e16}
\sup\limits_{\textbf{v}_h\in\textbf{V}_h}\frac{b_h(\textbf{v}_h,q_h)}{\|D\textbf{v}_h\|}\geq \beta\|q_h\|_{l^2,M}~~\forall q_h\in W_h.
\end{equation}
\end{lemma}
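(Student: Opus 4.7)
The plan is to establish the discrete LBB estimate by Fortin's argument: reduce to the continuous inf-sup condition on $\textbf{V}\times W$ and construct a bounded interpolation operator $I_h:H^1_0(\Omega)^2\to\textbf{V}_h$ that preserves the divergence in a sense compatible with the bilinear form $b_h$.

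First I would invoke the continuous inf-sup condition: for any $q_h\in W_h\subset L^2_0(\Omega)$ there exists $\textbf{v}\in H^1_0(\Omega)^2$ with $-\nabla\cdot\textbf{v}=q_h$ and $\|\textbf{v}\|_{H^1}\le C\|q_h\|_{L^2}$, and since $q_h$ is piecewise constant on $\mathcal{T}_h$ the $L^2$ norm and the discrete norm $\|q_h\|_{l^2,M}$ coincide. Next, I would construct the interpolant $I_h\textbf{v}=(I_h^1v_1,I_h^2v_2)\in S_h^1\times S_h^2$ componentwise using the shifted meshes $\mathcal{T}_h^1,\mathcal{T}_h^2$: each $I_h^\ell$ is the standard Scott--Zhang (or Clément) interpolation onto continuous bilinear functions vanishing on $\Gamma$, modified in a local fashion along vertical (resp. horizontal) edges so that the flux across each such edge is preserved, i.e.
\begin{equation*}
\int_{e}(I_h^1v_1-v_1)\,dy=0,\qquad \int_{e'}(I_h^2v_2-v_2)\,dx=0,
\end{equation*}
for every vertical edge $e$ of $\mathcal{T}_h$ and every horizontal edge $e'$ of $\mathcal{T}_h$. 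Standard finite element arguments then yield the $H^1$ stability $\|DI_h\textbf{v}\|\le C\|\textbf{v}\|_{H^1}$.

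The key algebraic step is to apply the divergence theorem on each cell $\Omega_{i+1/2,j+1/2}\in\mathcal{T}_h$, which together with the edge-flux preservation gives
\begin{equation*}
\int_{\Omega_{i+1/2,j+1/2}}\nabla\cdot(I_h\textbf{v}-\textbf{v})\,dx=\sum_{e\subset\partial\Omega_{i+1/2,j+1/2}}\int_e (I_h\textbf{v}-\textbf{v})\cdot\textbf{n}\,ds=0.
\end{equation*}
Since $\Pi_h(\nabla\cdot I_h\textbf{v})|_{\Omega_{i+1/2,j+1/2}}$ is the value at the cell center and $\nabla\cdot I_h\textbf{v}$ is (a sum of) linear functions on the sub-cells of $\mathcal{T}_h^1,\mathcal{T}_h^2$, a midpoint/cell-centroid argument identifies this centerpoint value with the cell average, so
\begin{equation*}
|\Omega_{i+1/2,j+1/2}|\,\Pi_h(\nabla\cdot I_h\textbf{v})|_{\Omega_{i+1/2,j+1/2}}=\int_{\Omega_{i+1/2,j+1/2}}\nabla\cdot\textbf{v}\,dx.
\end{equation*}
Multiplying by $q_h|_{\Omega_{i+1/2,j+1/2}}$ and summing gives $b_h(I_h\textbf{v},q_h)=b(\textbf{v},q_h)=\|q_h\|^2_{L^2}$, and combining with the $H^1$ stability of $I_h$ and the continuous bound on $\textbf{v}$ yields
\begin{equation*}
\sup_{\textbf{v}_h\in\textbf{V}_h}\frac{b_h(\textbf{v}_h,q_h)}{\|D\textbf{v}_h\|}\ge\frac{b_h(I_h\textbf{v},q_h)}{\|DI_h\textbf{v}\|}\ge \frac{\|q_h\|^2_{l^2,M}}{C\|q_h\|_{l^2,M}}=\beta\|q_h\|_{l^2,M}.
\end{equation*}

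The main obstacle is the second step above: verifying that the Scott--Zhang-type interpolation on the staggered bilinear spaces $S_h^1,S_h^2$ can simultaneously achieve the midpoint-flux preservation on each edge of $\mathcal{T}_h$ and stay $H^1$-stable with constant independent of $h,k$. This is delicate because the degrees of freedom of $\textbf{V}_h$ live on the shifted partitions $\mathcal{T}_h^1,\mathcal{T}_h^2$ while the flux constraints are posed on the original edges of $\mathcal{T}_h$; carrying this out locally edge-by-edge with anisotropic mesh ratios (and maintaining the homogeneous boundary condition) is where the bulk of the technical work lies, and is the reason we rely on the detailed construction in \cite{rui2017stability}.
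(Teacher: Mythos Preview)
The paper does not actually prove this lemma; it is simply quoted from \cite[Lemma~3.3]{rui2017stability}. Your Fortin-type strategy is indeed the standard route used there, so the overall plan is correct.

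There is, however, a concrete gap in your ``midpoint/cell-centroid'' step. On a cell $T=\Omega_{i+1/2,j+1/2}$ of $\mathcal{T}_h$, the function $\partial_x(I_h^1 v_1)$ is \emph{piecewise} linear in $y$ (constant in $x$), with the break exactly at the centroid height $y=y_{j+1/2}$, because $T$ is split by the horizontal line of $\mathcal{T}_h^1$ through that point. For a continuous piecewise-linear function with the kink at the midpoint, the midpoint value equals the average only when the two slopes agree. Hence the identity $|T|\,\Pi_h(\nabla\cdot I_h\textbf{v})=\int_T \nabla\cdot I_h\textbf{v}$ does not follow, and your chain from the edge-flux condition to $b_h(I_h\textbf{v},q_h)=b(\textbf{v},q_h)$ breaks.

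The repair is simple and in fact makes the argument cleaner: instead of imposing $\int_e(I_h^1 v_1-v_1)\,dy=0$, \emph{define} the nodal value directly by the edge mean,
\[
(I_h^1 v_1)(x_i,y_{j+1/2})=\frac{1}{k_{j+1/2}}\int_{y_j}^{y_{j+1}} v_1(x_i,y)\,dy,
\qquad
(I_h^2 v_2)(x_{i+1/2},y_j)=\frac{1}{h_{i+1/2}}\int_{x_i}^{x_{i+1}} v_2(x,y_j)\,dx.
\]
Since for bilinear $I_h^1 v_1$ one has $\partial_x(I_h^1 v_1)(x_{i+1/2},y_{j+1/2})=\big[(I_h^1 v_1)(x_{i+1},y_{j+1/2})-(I_h^1 v_1)(x_i,y_{j+1/2})\big]/h_{i+1/2}$, a one-line computation gives
\[
|T|\,\Pi_h(\nabla\cdot I_h\textbf{v})\big|_T=\int_{\partial T}\textbf{v}\cdot\textbf{n}\,ds=\int_T\nabla\cdot\textbf{v}\,dx
\]
with no midpoint-rule step needed. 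The $H^1$-stability of this interpolant then follows from a local trace/scaling argument, which is the technical content you correctly defer to \cite{rui2017stability}.
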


We  also define the operator $\textbf{I}_h:~\textbf{V}\rightarrow \textbf{V}_h,$ such that
\begin{equation}\label{e_H1 projection}
\aligned
(\nabla\cdot \textbf{I}_h\textbf{v},w)=(\nabla\cdot \textbf{v},w) \ \forall w\in W_h,
\endaligned
\end{equation}
with  the following approximation properties \cite{dawson1998two}:
\begin{align}
\|\textbf{v}-\textbf{I}_h\textbf{v}\|\leq &C\|\textbf{v}\|_{W^1_2(\Omega)}\hat{h}, \label{e_H1 projection_error1}\\
\|\nabla\cdot(\textbf{v}-\textbf{I}_h\textbf{v})\|\leq &C\|\nabla\cdot\textbf{v}\|_{W^1_2(\Omega)}\hat{h},\label{e_H1 projection_error2}
\end{align}
where $\hat{h}=\max\{h,k\}$.

Besides, by the definition of $\textbf{I}_h\textbf{v}$ and the midpoint rule of integration,
the $L^\infty$ norm of the projection is obtained by
\begin{equation}\label{e_H1 projection_error3}
\|\textbf{v}-\textbf{I}_h\textbf{v}\|_{\infty}\leq C\|\textbf{v}\|_{W_{\infty}^2(\Omega)}\hat{h}.
\end{equation}

Furthermore, we have the following 
 estimate \cite{duran1990superconvergence}: 
  \begin{equation}\label{e_H1 projection_error4}
\|\textbf{v}-\textbf{I}_h\textbf{v}\|_{l^2}\leq C\hat{h}^2.
\end{equation}

\subsection{A first error estimate with a $l^{\infty}_{m}(L^{\infty})$ bound assumption}

For simplicity, we set
\begin{equation}\label{errors}
\aligned
&\displaystyle e_{\textbf{u}}^n=(\textbf{U}^n-\textbf{W}^n)+(\textbf{W}^n-\textbf{u}^n):=\bm{\xi}^n+\bm{\gamma}^n,\\
& \displaystyle e_{p}^n=(P^n-H^n)+(H^n-p^n):=\eta^n+\zeta^n, \\
& \displaystyle e_{q}^n=Q^n-q^n.
\endaligned
\end{equation} 
We define $L_m$  by 
\begin{equation}\label{Lm}
L_m=\|\textbf{U}\|_{{l^{\infty}_{m}}(L^{\infty})}=\max_{n=0,\ldots,m}\|\textbf{U}^n\|_{L^{\infty}}.
 \end{equation}
First we prove the boundedness of the discrete velocity in the discrete $L^2$ norm by using the energy stability.

\begin{lemma}\label{lem: boundedness of L2 norm}
Let $\{U^k\}$ be the solution of \eqref{e_full_discrete1}-\eqref{e_full_discrete4}. We have
\begin{equation}\label{e_boundedness**}
\aligned
\|\textbf{U}^{m+1}\|_{l^2}\leq C(L_{m}),
\endaligned
\end{equation} 
where $C(L_{m})$ is independent of $h$, $k$ and $\Delta t$ but dependent of $L_{m}$.
\end{lemma}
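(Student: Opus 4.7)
The plan is to derive a direct energy-type estimate for $\textbf{U}^{n+1/2}$ from the momentum equations (\ref{e_full_discrete1})--(\ref{e_full_discrete2}), using the $L^\infty$ assumption encoded in $L_m$ to control the explicit convection, and using the discrete energy law of Theorem~\ref{thm_discrete total energy} to bound the SAV ratio $Q^{n+1/2}/B^{n+1/2}$.

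First I would extract a uniform bound on $|Q^n|$. Allowing a forcing term in the argument of Theorem~\ref{thm_discrete total energy} and summing the resulting identity, Cauchy--Schwarz, discrete Poincaré, and Young's inequality give
\begin{equation*}
|Q^{m+1}|^2+\tfrac{\nu}{2}\sum_{n=0}^{m}\Delta t\,\|D\textbf{U}^{n+1/2}\|^2\le |Q^0|^2+C\sum_{n=0}^{m}\Delta t\,\|\textbf{f}^{n+1/2}\|^2,
\end{equation*}
so $|Q^{n+1/2}|\le C$ uniformly in $n$ (independent of $L_m$). Combined with the trivial lower bound $B^{n+1/2}=\sqrt{E_h(\tilde{\textbf{U}}^{n+1/2})+\delta}\ge\sqrt{\delta}$, this gives $|Q^{n+1/2}/B^{n+1/2}|\le C/\sqrt{\delta}$.

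Next I would test (\ref{e_full_discrete1}) against $U_1^{n+1/2}$ and (\ref{e_full_discrete2}) against $U_2^{n+1/2}$, sum, and use the discrete incompressibility (\ref{e_full_discrete4}) together with Lemma~\ref{lemma:U-P-Relation} to cancel the pressure contribution. The outcome is
\begin{equation*}
\frac{\|\textbf{U}^{n+1}\|_{l^2}^2-\|\textbf{U}^n\|_{l^2}^2}{2\Delta t}+\nu\|D\textbf{U}^{n+1/2}\|^2=-\frac{Q^{n+1/2}}{B^{n+1/2}}\mathcal{N}^{n+1/2}+(\textbf{f}^{n+1/2},\textbf{U}^{n+1/2})_{l^2},
\end{equation*}
where $\mathcal{N}^{n+1/2}$ is exactly the discrete trilinear form defined after (\ref{e_full_discrete4}). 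Crucially, the SAV cancellation is \emph{not} invoked here: the convection is retained and controlled directly. Because $\tilde{\textbf{U}}^{n+1/2}=(3\textbf{U}^n-\textbf{U}^{n-1})/2$ inherits the cell-centered discrete divergence-free condition from its constituents, a discrete summation-by-parts transfers the difference operator from $\mathcal{P}_h\tilde{\textbf{U}}^{n+1/2}$ onto $\textbf{U}^{n+1/2}$ up to vanishing boundary terms (from the homogeneous boundary conditions in (\ref{e_boundary and initial condition})). Bounding $\|\mathcal{P}_h\tilde{\textbf{U}}^{n+1/2}\|_{L^\infty}\le CL_m$ then yields $|\mathcal{N}^{n+1/2}|\le CL_m^2\|D\textbf{U}^{n+1/2}\|$.

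Finally I would apply Young's inequality twice: once to absorb $\tfrac{C}{\sqrt{\delta}}L_m^2\|D\textbf{U}^{n+1/2}\|$ into $\tfrac{\nu}{2}\|D\textbf{U}^{n+1/2}\|^2$, and once to split the forcing pairing into $C\|\textbf{f}^{n+1/2}\|^2+\|\textbf{U}^{n+1/2}\|_{l^2}^2$. The resulting inequality closes by the discrete Gronwall lemma, producing $\|\textbf{U}^{m+1}\|_{l^2}\le C(L_m)$. The main technical obstacle is the discrete integration-by-parts identity on the staggered MAC mesh involving the bilinear interpolation $\mathcal{P}_h$: one must verify that $(\mathcal{P}_h\tilde{\textbf{U}}\cdot\nabla_h\mathcal{P}_h\tilde{\textbf{U}},\textbf{V})_{l^2}=-(\mathcal{P}_h\tilde{\textbf{U}}\cdot\nabla_h\textbf{V},\mathcal{P}_h\tilde{\textbf{U}})_{l^2}$ holds (with harmless remainders) precisely because $d_x\tilde U_1+d_y\tilde U_2=0$ at every cell center, so that the derivative can be moved off $\tilde{\textbf{U}}$ (which is only controlled in $L^\infty$) onto $\textbf{U}^{n+1/2}$ (which is controlled by the dissipation).
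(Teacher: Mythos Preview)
Your first step (the uniform bound on $|Q^{n+1/2}|$ from the forced energy law) matches the paper exactly. The divergence occurs at the second step: you test the momentum equations with $\textbf{U}^{n+1/2}$ and then try to shift the discrete gradient in $\mathcal{N}^{n+1/2}$ from $\mathcal{P}_h\tilde{\textbf{U}}^{n+1/2}$ onto $\textbf{U}^{n+1/2}$ by a summation-by-parts/skew-symmetry identity. That identity is precisely the fragile point. On the MAC stencil used here the trilinear form mixes $\tilde U_k$, $\mathcal{P}_h\tilde U_k$, and the two difference operators $D$ and $d$ on different staggered locations, so the discrete product rule leaves residual terms of the type $(\mathcal{P}_h\tilde U_1\,U_1,\,d_x\tilde U_1)$ that still carry a derivative of $\tilde{\textbf U}$. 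The cell-centered condition $d_x\tilde U_1+d_y\tilde U_2=0$ does not by itself force these residuals to cancel the way the continuous identity $(\textbf u\cdot\nabla\textbf v,\textbf v)=0$ does; this discretization of the convective term was not designed to be skew-symmetric. So the step ``$|\mathcal N^{n+1/2}|\le CL_m^2\|D\textbf U^{n+1/2}\|$'' is unjustified, and without it the argument does not close (direct Cauchy--Schwarz would produce $\|\nabla_h\mathcal{P}_h\tilde{\textbf U}^{n+1/2}\|$, a gradient at \emph{previous} steps, for which you have only $l^2$-in-time control of the averages $D\textbf U^{k+1/2}$, not of $D\textbf U^k$ individually).

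The paper circumvents this by choosing a different test function: it multiplies \eqref{e_full_discrete1}--\eqref{e_full_discrete2} by $d_t\textbf U^{n+1}$ rather than $\textbf U^{n+1/2}$. This produces $\|d_t\textbf U^{n+1}\|_{l^2}^2$ on the left together with the telescoping increment $\|D\textbf U^{n+1}\|^2-\|D\textbf U^{n}\|^2$. The convective term is then bounded crudely by Cauchy--Schwarz as
\[
\Big|\tfrac{Q^{n+1/2}}{B^{n+1/2}}\big(\mathcal P_h\tilde{\textbf U}^{n+1/2}\!\cdot\!\nabla_h\mathcal P_h\tilde{\textbf U}^{n+1/2},\,d_t\textbf U^{n+1}\big)_{l^2}\Big|
\le C(L_n)\big(\|D\textbf U^{n}\|^2+\|D\textbf U^{n-1}\|^2\big)+\tfrac14\|d_t\textbf U^{n+1}\|_{l^2}^2,
\]
with no integration by parts needed: the $L^\infty$ bound handles the outer factor $\mathcal P_h\tilde{\textbf U}$, and the gradient factor lands on $\tilde{\textbf U}^{n+1/2}$, i.e.\ on $\|D\textbf U^{n}\|,\|D\textbf U^{n-1}\|$, which feed directly into a discrete Gronwall inequality for $\|D\textbf U^{m+1}\|^2$. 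The claimed $l^2$ bound then follows from the discrete Poincar\'e inequality. In short, the paper trades your skew-symmetry step for a test with the discrete time derivative, so that all derivatives of $\textbf U$ appearing on the right are at earlier time levels and are absorbed by Gronwall.
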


\begin{proof}
Multiplying (\ref{e_full_discrete1}) by $d_tU_{1,i,j+1/2}^{n+1}hk$, making summation on $i,j$ for $1\leq i\leq N_x-1,\ 0\leq j\leq N_y-1$, and recalling Lemma \ref{lemma:U-P-Relation}, we have 
\begin{equation}\label{e_boundedness1}
\aligned
&\|d_tU_1^{n+1}\|_{l^2,T,M}^2+\frac{\nu}{2\Delta t}(\|d_x U^{n+1}_1\|^2_{l^2,M}-\|d_x U^{n}_1\|^2_{l^2,M}+\|D_yU^{n+1}_1\|^2_{l^2,T_y}-\|D_yU^{n}_1\|^2_{l^2,T_y})\\
&+\frac{Q^{n+1/2}}{B^{n+1/2}}\left(\tilde{U}_1^{n+1/2}D_x(\mathcal{P}_h\tilde{U}_1^{n+1/2})+\mathcal{P}_h \tilde{U}_2^{n+1/2}d_y(\mathcal{P}_h \tilde{U}_1^{n+1/2}), d_tU_1^{n+1}\right)_{l^2,T,M}\\
&-(P^{n+1/2},d_xd_tU^{n+1}_1)_{l^2,M}
=(f_1^{n+1/2},d_tU_1^{n+1/2})_{l^2,T,M}.
\endaligned
\end{equation} 

Similarly multiplying (\ref{e_full_discrete2}) by $d_tU_{2,i+1/2,j}^{n+1}hk$, and making summation on $i,j$ for $0\leq i\leq N_x-1,\ 1\leq j\leq N_y-1$, we can obtain
\begin{equation}\label{e_boundedness2}
\aligned
&\|d_tU_2^{n+1}\|_{l^2,M,T}^2+\frac{\nu}{2\Delta t}(\|d_y U^{n+1}_2\|^2_{l^2,M}-\|d_y U^{n}_2\|^2_{l^2,M}+\|D_xU^{n+1}_2\|^2_{l^2,T_x}-\|D_xU^{n}_2\|^2_{l^2,T_x})\\
&+\frac{Q^{n+1/2}}{B^{n+1/2}}\left(\mathcal{P}_h \tilde{U}_1^{n+1/2}d_x(\mathcal{P}_h \tilde{U}_2^{n+1/2})+\tilde{U}_2^{n+1/2}D_y(\mathcal{P}_h\tilde{U}_2^{n+1/2}),d_tU_2^{n+1}\right)_{l^2,M,T}\\
&-(P^{n+1/2},d_yd_tU^{n+1}_2)_{l^2,M}=(f_2^{n+1/2},d_tU_2^{n+1})_{l^2,M,T}.
\endaligned
\end{equation} 
Combining \eqref{e_boundedness1} with \eqref{e_boundedness2} results in 
\begin{equation}\label{e_boundedness3}
\aligned
\|d_t\textbf{U}^{n+1}\|_{l^2}^2&+\frac{\nu}{2\Delta t}(\|D\textbf{U}^{n+1}\|^2-\|D\textbf{U}^{n}\|^2)\\
=&(\textbf{f}^{n+1/2},d_t\textbf{U}^{n+1})_{l^2}-\frac{Q^{n+1/2}}{B^{n+1/2}}(\mathcal{P}_h\tilde{\textbf{U}}^{n+1/2}\cdot\nabla_h(\mathcal{P}_h\tilde{\textbf{U}}^{n+1/2}),d_t\textbf{U}^{n+1})_{l^2}.
\endaligned
\end{equation} 
Recalling \eqref{e_Stability4} and using Cauchy-Schwarz inequality and Poincar\'e inequality, we  obtain
\begin{equation}\label{e_boundedness4}
\aligned
&|Q^{n+1}|^2-|Q^{0}|^2+\nu\sum\limits_{k=0}^n\Delta t\|D\textbf{U}^{k+1/2}\|^2
=\sum\limits_{k=0}^n\Delta t(\textbf{f}^{k+1/2},\textbf{U}^{k+1/2})\\
&\ \ \ \ \ \ \ \ 
\leq \frac{\nu}{2}\sum\limits_{k=0}^n\Delta t\|D\textbf{U}^{k+1/2}\|^2+C\sum\limits_{k=0}^n\Delta t\|\textbf{f}^{k+1/2}\|_{l^2}^2,
\endaligned
\end{equation} 
which implies 
\begin{equation}\label{e_boundedness of Q}
\aligned
|Q^{n+1}|\leq C.
\endaligned
\end{equation}
Using the above and the assumption, the last term on the right hand side of \eqref{e_boundedness3} can be estimated by
\begin{equation}\label{e_boundedness5}
\aligned
|-\frac{Q^{n+1/2}}{B^{n+1/2}}&(\mathcal{P}_h\tilde{\textbf{U}}^{n+1/2}\cdot\nabla_h(\mathcal{P}_h\tilde{\textbf{U}}^{n+1/2}),d_t\textbf{U}^{n+1})_{l^2}|\\
\leq &C(L_{n})(\|D\textbf{U}^{n}\|^2+\|D\textbf{U}^{n-1}\|^2)+\frac{1}{4}\|d_t\textbf{U}^{n+1}\|_{l^2}^2.
\endaligned
\end{equation} 
Combining \eqref{e_boundedness3} with \eqref{e_boundedness5} and using Cauchy-Schwarz ineqality, we have
\begin{equation}\label{e_boundedness6}
\aligned
\|d_t\textbf{U}^{n+1}\|_{l^2}^2&+\frac{\nu}{2\Delta t}(\|D\textbf{U}^{n+1}\|^2-\|D\textbf{U}^{n}\|^2)\\
\leq&C(L_{n})(\|D\textbf{U}^{n}\|^2+\|D\textbf{U}^{n-1}\|^2)+\frac{1}{2}\|d_t\textbf{U}^{n+1}\|_{l^2}^2+\frac{1}{2}\|\textbf{f}^{n+1/2}\|_{l^2}^2.
\endaligned
\end{equation} 
Multiplying \eqref{e_boundedness6} by $2\Delta t$, summing over $n$ from 0 to $m$ and applying Gronwall inequality give that
\begin{equation}\label{e_boundedness7}
\aligned
&\|D\textbf{U}^{m+1}\|^2\leq C(L_{m})\sum\limits_{n=0}^{m}\Delta t\|\textbf{f}^{n+1/2}\|_{l^2}^2.
\endaligned
\end{equation} 
Thus we can get the desired result \eqref{e_boundedness**} by applying the discrete Poincar\'e inequality.
\end{proof}
\medskip

\begin{lemma}\label{lem: error_estimate_u}
Assuming $\textbf{u}\in W^{3}_{\infty}(J;W^{4}_{\infty}(\Omega))^2$, $p\in W^{3}_{\infty}(J;W^{3}_{\infty}(\Omega))$, we have
\begin{equation}\label{e_error_estimate_u}
\aligned
\frac{1}{2}\|\bm{\xi}^{m+1}\|_{l^2}^2&+\frac{\nu}{2}\sum_{n=0}^m\Delta t\|D \bm{\xi}^{n+1/2}\|^2+|e_{q}^{m+1}|^2\\
\leq &C(L_{m})\sum_{n=0}^m\Delta t\|\bm\xi^{n+1}\|^2_{l^2}
+\frac{1}{2}\sum_{n=0}^m\Delta t\|d_t\bm\xi^{n+1}\|_{l^2}^2\\
&+\frac{1}{\kappa}C(L_{m})\sum_{n=0}^m\Delta t|e_q^{n+1}|^2+\frac{1}{\kappa}C(L_{m})(\Delta t^4+h^4+k^4),
\endaligned
\end{equation}
where $\bm{\xi}^k$ and $e_{q}^{k}$ are defined in \eqref{errors},
$\kappa$ is the constant in \eqref{Q_below bound}, and   the positive constant $C(L_{m})$ is independent of $h$, $k$ and $\Delta t$ but dependent of $L_{m}$.
\end{lemma}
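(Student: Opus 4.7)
The plan is to derive error equations for $\bm{\xi}^{n+1}=\textbf{U}^{n+1}-\textbf{W}^{n+1}$ and $e_q^{n+1}=Q^{n+1}-q^{n+1}$, test the velocity error against $\bm{\xi}^{n+1/2}$ and the $q$-error against $2 e_q^{n+1/2}$, then carefully split the SAV nonlinearity into three pieces that can be absorbed into the stated right-hand side.  First I would subtract the auxiliary scheme (3.11)--(3.13) from the full scheme (2.22)--(2.24).  The result is a pair of momentum error equations of the form
\begin{equation*}
d_t\xi_i^{n+1} - \nu D_i(d_i\xi_i)^{n+1/2} - \nu d_{j}(D_{j}\xi_i)^{n+1/2} + [D_i\eta]^{n+1/2} = R_i^{n+1/2},
\end{equation*}
together with the discrete divergence-free condition $d_x\xi_1+d_y\xi_2=0$ (inherited from (2.25) and (3.13)), where $R^{n+1/2}=\textbf{u}^{n+1/2}\!\cdot\!\nabla\textbf{u}^{n+1/2}-S^{n+1/2}\mathcal{P}_h\tilde{\textbf{U}}^{n+1/2}\!\cdot\!\nabla_h\mathcal{P}_h\tilde{\textbf{U}}^{n+1/2}$ with $S^{n+1/2}=Q^{n+1/2}/B^{n+1/2}$.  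Pairing the error equations with $\xi_1^{n+1/2},\xi_2^{n+1/2}$ (using Lemma \ref{lemma:U-P-Relation}) kills the pressure term via the divergence-free identity and yields
\begin{equation*}
\tfrac{1}{2\Delta t}(\|\bm{\xi}^{n+1}\|_{l^2}^2-\|\bm{\xi}^n\|_{l^2}^2)+\nu\|D\bm{\xi}^{n+1/2}\|^2=(R^{n+1/2},\bm{\xi}^{n+1/2})_{l^2}.
\end{equation*}

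Next I would decompose $R^{n+1/2}=R_I+R_{II}+R_{III}$ with $R_I=(1-S^{n+1/2})\mathcal{P}_h\tilde{\textbf{U}}\cdot\nabla_h\mathcal{P}_h\tilde{\textbf{U}}$, $R_{II}$ the bilinear-in-error difference $\mathcal{P}_h\tilde{\textbf{u}}\cdot\nabla_h\mathcal{P}_h\tilde{\textbf{u}}-\mathcal{P}_h\tilde{\textbf{U}}\cdot\nabla_h\mathcal{P}_h\tilde{\textbf{U}}$, and $R_{III}$ a pure spatio-temporal consistency residual.  The key algebraic identity is
\begin{equation*}
1-S^{n+1/2}=\frac{(B^{n+1/2}-q^{n+1/2})-e_q^{n+1/2}}{B^{n+1/2}},\qquad B^{n+1/2}-q^{n+1/2}=\frac{E_h(\tilde{\textbf{U}}^{n+1/2})-E(\textbf{u}^{n+1/2})}{B^{n+1/2}+q^{n+1/2}},
\end{equation*}
so that, bounding the numerator by $\tfrac12(\tilde{\textbf{U}}+\tilde{\textbf{u}},\tilde{\bm\xi}+\tilde{\bm\gamma})_{l^2}+\bigl[E_h(\tilde{\textbf{u}}^{n+1/2})-E(\textbf{u}^{n+1/2})\bigr]$ and using Lemma \ref{le_auxiliary} to absorb $\tilde{\bm\gamma}$, I obtain $|1-S^{n+1/2}|\le C(|e_q^{n+1/2}|+\|\tilde{\bm\xi}^{n+1/2}\|_{l^2}+\Delta t^2+h^2+k^2)/\sqrt{\delta}$.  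Using $\|\mathcal{P}_h\tilde{\textbf{U}}\|_\infty\le CL_m$ and the uniform $H^1$-like bound on $\tilde{\textbf{U}}$ from Lemma \ref{lem: boundedness of L2 norm}, Cauchy--Schwarz plus Young's inequality give the $R_I$ contribution in the required form.  Terms $R_{II}$ and $R_{III}$ are bounded the standard way: $R_{II}$ is expanded as $-\mathcal{P}_h\tilde{\textbf{e}}_u\!\cdot\!\nabla_h\mathcal{P}_h\tilde{\textbf{u}}-\mathcal{P}_h\tilde{\textbf{U}}\!\cdot\!\nabla_h\mathcal{P}_h\tilde{\textbf{e}}_u$ and estimated using the $L^\infty$ bound on $\tilde{\textbf{u}}$, $L_m$, and Lemma \ref{le_auxiliary}; $R_{III}$ contributes the clean $O(\Delta t^4+h^4+k^4)$ term.

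Third I would handle the $q$-error.  Evaluating the continuous identity (2.7) at $t^{n+1/2}$ and expanding by Taylor around $t^{n+1/2}$ produces a consistent continuous analog of (2.24) with an $O(\Delta t^2)$ remainder; subtracting this from (2.24) gives
\begin{equation*}
d_t e_q^{n+1}= \tfrac{1}{2B^{n+1/2}}(\mathcal{P}_h\tilde{\textbf{U}}\!\cdot\!\nabla_h\mathcal{P}_h\tilde{\textbf{U}},\textbf{U}^{n+1/2})_{l^2}-\tfrac{1}{2 q^{n+1/2}}(\textbf{u}\!\cdot\!\nabla\textbf{u},\textbf{u})^{n+1/2}+\tfrac{1}{2Q^{n+1/2}}(d_t\textbf{U}^{n+1},\textbf{U}^{n+1/2})_{l^2}-\tfrac{1}{2q^{n+1/2}}(\partial_t\textbf{u},\textbf{u})^{n+1/2}+O(\Delta t^2).
\end{equation*}
Multiplying by $2e_q^{n+1/2}$ telescopes the left side to $(|e_q^{n+1}|^2-|e_q^n|^2)/\Delta t$.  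On the right, I add and subtract the intermediate quantities $\tfrac{1}{2q^{n+1/2}}(\mathcal{P}_h\tilde{\textbf{u}}\!\cdot\!\nabla_h\mathcal{P}_h\tilde{\textbf{u}},\textbf{u}^{n+1/2})$ and $\tfrac{1}{2q^{n+1/2}}(d_t\textbf{U}^{n+1},\textbf{U}^{n+1/2})$, so that each resulting summand is a product of a known small quantity (an error in $\bm{\xi}$, $\tilde{\bm\xi}$, $e_q$, or a consistency residual) with something controlled by $L_m$.  The $1/\kappa$ factor enters precisely through $\tfrac{1}{Q^{n+1/2}}$ and $\tfrac{1}{Q^{n+1/2}}-\tfrac{1}{q^{n+1/2}}=-e_q^{n+1/2}/(Q^{n+1/2}q^{n+1/2})$, both of which are controlled by $|Q^{n+1/2}|\ge\kappa$ and $q^{n+1/2}\ge\sqrt\delta$.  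The $\tfrac12\|d_t\bm\xi^{n+1}\|_{l^2}^2$ on the right arises exactly from the pairing $\tfrac{e_q^{n+1/2}}{Q^{n+1/2}}(d_t\bm\xi^{n+1},\textbf{U}^{n+1/2})_{l^2}$ after a Young splitting that balances $C(L_m)|e_q^{n+1/2}|^2$ against $\tfrac12\|d_t\bm\xi^{n+1}\|^2$.

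Finally I would sum the velocity and $q$ estimates over $n=0,\dots,m$, use $\bm\xi^0=0$ and $e_q^0=0$, and collect all constants into $C(L_m)$.  The principal obstacle is the bookkeeping in the third step: the nonlinear cross-term $\tfrac{1}{B^{n+1/2}}(\mathcal{P}_h\tilde{\textbf{U}}\!\cdot\!\nabla_h\mathcal{P}_h\tilde{\textbf{U}},\textbf{U}^{n+1/2})-\tfrac{1}{q^{n+1/2}}(\textbf{u}\!\cdot\!\nabla\textbf{u},\textbf{u})^{n+1/2}$ mixes three distinct small quantities ($\tilde{\bm\xi}^{n+1/2}$, $\bm\xi^{n+1/2}$, and $e_q^{n+1/2}/B^{n+1/2}$) with bounded but not small factors; they must be separated carefully so that each is absorbed either into the diffusion $\tfrac\nu2\|D\bm\xi^{n+1/2}\|^2$, into one of the Gronwall-ready terms $C(L_m)\|\bm\xi^{n+1}\|^2_{l^2}$ or $\tfrac{1}{\kappa}C(L_m)|e_q^{n+1}|^2$, or into the $(\Delta t^4+h^4+k^4)$ consistency bucket.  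The $\|d_t\bm\xi^{n+1}\|_{l^2}^2$ term that survives on the right is not absorbable here; as the statement indicates, it will be handled by a separate estimate obtained by testing the momentum error equation against $d_t\bm\xi^{n+1}$ in a later lemma.
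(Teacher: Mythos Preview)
Your plan is sound and leads to the same estimate, but it departs from the paper at one structural point that is worth flagging. The paper does \emph{not} bound the factor $1-S^{n+1/2}$ directly. Instead it splits $Q^{n+1/2}=e_q^{n+1/2}+q^{n+1/2}$ so that the velocity right-hand side produces the exact cross-term $-\tfrac{e_q^{n+1/2}}{B^{n+1/2}}(\mathcal{P}_h\tilde{\textbf{U}}^{n+1/2}\!\cdot\!\nabla_h\mathcal{P}_h\tilde{\textbf{U}}^{n+1/2},\bm{\xi}^{n+1/2})$, and, in the $q$-error equation, splits $\textbf{U}^{n+1/2}=\textbf{W}^{n+1/2}+\bm{\xi}^{n+1/2}$ to pull out the \emph{same} term with the opposite sign. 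When the two estimates are added these cross-terms cancel identically; nothing is left to be absorbed into the diffusion or bounded via Lemma~\ref{lem: boundedness of L2 norm}. This is the signature SAV cancellation that mirrors the energy-stability proof, and it is why the paper's decomposition of $\textbf{T}^{n+1/2}$ in \eqref{e_lem_error14} and of the $q$-error in \eqref{e_lem_error7} look asymmetric at first sight.

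Your route---estimating $|1-S^{n+1/2}|$ and then $|(\mathcal{P}_h\tilde{\textbf{U}}\!\cdot\!\nabla_h\mathcal{P}_h\tilde{\textbf{U}},\bm{\xi}^{n+1/2})|$ separately---also closes, but it costs you the extra use of the $\|D\textbf{U}\|$ bound from Lemma~\ref{lem: boundedness of L2 norm} to control $\|\nabla_h\mathcal{P}_h\tilde{\textbf{U}}\|_{l^2}$, and you must repeat essentially the same bound a second time in the $q$-equation. One place to be careful in your $R_{II}$ step: the piece $(\mathcal{P}_h\tilde{\textbf{U}}^{n+1/2}\!\cdot\!\nabla_h\mathcal{P}_h\tilde{\bm\xi}^{n+1/2},\bm\xi^{n+1/2})$ cannot be handled purely by $L^\infty$ bounds and Lemma~\ref{le_auxiliary}; you need a discrete integration by parts (Lemma~\ref{lemma:U-P-Relation}) to move the gradient from $\tilde{\bm\xi}$ onto $\bm\xi^{n+1/2}$ so that the resulting $\|D\bm\xi^{n+1/2}\|^2$ can be absorbed into the diffusion on the left (this is exactly what the paper does in \eqref{e_lem_error18}--\eqref{e_lem_error19}). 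With that adjustment your argument goes through and yields \eqref{e_error_estimate_u}.
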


\begin{proof}
Subtracting \eqref{e36} from \eqref{e_full_discrete1}, we obtain
\begin{equation}\label{e_lem_error5}
\aligned
&d_t\xi^{n+1}_{1,i,j+1/2}
-\nu D_x(d_x\xi_1)^{n+1/2}_{i,j+1/2}-\nu d_y(D_y\xi_1)^{n+1/2}_{i,j+1/2}\\
&\ \ \ \ \ \ \ \ 
+[D_x\eta]^{n+1/2}_{i,j+1/2}
=T_{1,i,j+1/2}^{n+1/2},
\endaligned
\end{equation}
where 
\begin{equation}\label{e_T1}
\aligned
T_{1}^{n+1/2}=&-\frac{Q^{n+1/2}}{B^{n+1/2}}\left(\tilde{U}_1^{n+1/2}D_x(\mathcal{P}_h\tilde{U}_1^{n+1/2})+\mathcal{P}_h \tilde{U}_2^{n+1/2}d_y(\mathcal{P}_h \tilde{U}_1^{n+1/2})\right)\\
&+\frac{q(t^{n+1/2})}{\sqrt{E(\textbf{u}^{n+1/2})+\delta}}\left(u_1\frac{\partial u_1}{\partial x}+u_2\frac{\partial u_1}{\partial y}\right)^{n+1/2}.
\endaligned
\end{equation}
Subtracting \eqref{e37} from \eqref{e_full_discrete2}, we obtain
\begin{equation}\label{e_lem_error6}
\aligned
&d_t\xi^{n+1}_{2,i+1/2,j}
-\nu D_y(d_y\xi_2)^{n+1/2}_{i+1/2,j}-\nu d_x(D_x\xi_2)^{n+1/2}_{i+1/2,j}\\
&\ \ \ \ \ \ \ \ 
+[D_y\eta]^{n+1/2}_{i+1/2,j}=T_{2,i+1/2,j}^{n+1/2},
\endaligned
\end{equation}
where 
\begin{equation}\label{e_T2}
\aligned
T_{2}^{n+1/2}=&-\frac{Q^{n+1/2}}{B^{n+1/2}}\left(\mathcal{P}_h \tilde{U}_1^{n+1/2}d_x(\mathcal{P}_h \tilde{U}_2^{n+1/2})+\tilde{U}_2^{n+1/2}D_y(\mathcal{P}_h\tilde{U}_2^{n+1/2})\right)\\
&+\frac{q(t^{n+1/2})}{\sqrt{E(\textbf{u}^{n+1/2})+\delta}}\left(u_1\frac{\partial u_2}{\partial x}+u_2\frac{\partial u_2}{\partial y}\right)^{n+1/2}.
\endaligned
\end{equation}
Subtracting \eqref{e_model_transform2} from \eqref{e_full_discrete3}, we obtain
\begin{equation}\label{e_lem_error7}
\aligned
d_te_q^{n+1}=\frac{1}{2B^{n+1/2}}(\mathcal{P}_h\tilde{\textbf{U}}^{n+1/2}\cdot\nabla_h(\mathcal{P}_h\tilde{\textbf{U}}^{n+1/2}),\bm{\xi}^{n+1/2})_{l^2}+\sum\limits_{k=1}^{3}S_k^{n+1/2},
\endaligned
\end{equation}
where
\begin{flalign*}
\begin{split}
\hspace{10mm}%
&S_{1}^{n+1/2}= \frac{\rm{d} q^{n+1/2}}{\rm{d} t}-d_tq^{n+1},\\
\hspace{10mm}%
&S_{2}^{n+1/2}=\frac{1}{2B^{n+1/2}}(\mathcal{P}_h\tilde{\textbf{U}}^{n+1/2}\cdot\nabla_h(\mathcal{P}_h\tilde{\textbf{U}}^{n+1/2}),\textbf{W}^{n+1/2})_{l^2}\\
&\ \ \ \ \ \ 
-\frac{1}{2\sqrt{E(\textbf{u}^{n+1/2})+\delta}}\int_{\Omega}\textbf{u}^{n+1/2}\cdot \nabla\textbf{u}^{n+1/2}\cdot \textbf{u}^{n+1/2}d\textbf{x},\\
&S_{3}^{n+1/2}=\frac{1}{2Q^{n+1/2}}(d_t\textbf{U}^{n+1},\textbf{U}^{n+1/2})_{l^2}-
\frac{1}{2q^{n+1/2}}\int_{\Omega}\frac{\partial \textbf{u}^{n+1/2}}{\partial t}\cdot \textbf{u}^{n+1/2}d\textbf{x}.
\end{split}&
\end{flalign*}
Multiplying \eqref{e_lem_error5} by $\xi_{1,i,j+1/2}^{n+1/2}hk$, making summation on $i,j$ for $1\leq i\leq N_x-1,\ 0\leq j\leq N_y-1$ and applying Lemma \ref{lemma:U-P-Relation}, we have 
\begin{equation}\label{e_lem_error8}
\aligned
&(d_t\xi^{n+1}_{1},\xi_{1}^{n+1/2})_{l^2,T,M}+\nu\|d_x \xi^{n+1/2}_1\|^2_{l^2,M}+\nu\|D_y\xi^{n+1/2}_1\|^2_{l^2,T_y}\\
&-(\eta^{n+1/2},d_x\xi^{n+1/2}_1)_{l^2,M}
=(T_1^{n+1/2},\xi_1^{n+1/2})_{l^2,T,M}.
\endaligned
\end{equation}
Multiplying (\ref{e_lem_error6}) by $\xi_{2,i+1/2,j}^{n+1/2}hk$, making summation on $i,j$ for $0\leq i\leq N_x-1,\ 1\leq j\leq N_y-1$ and applying Lemma \ref{lemma:U-P-Relation} lead to
\begin{equation}\label{e_lem_error9}
\aligned
&(d_t\xi^{n+1}_{2},\xi_{2}^{n+1/2})_{l^2,M,T}+\nu\|d_y \xi^{n+1/2}_2\|^2_{l^2,M}
+\nu\|D_x\xi^{n+1/2}_2\|^2_{l^2,T_x}\\
&-(\eta^{n+1/2},d_y\xi^{n+1/2}_2)_{l^2,M}
=(T_2^{n+1/2},\xi_2^{n+1/2})_{l^2,M,T}.
\endaligned
\end{equation}
Multiplying equation (\ref{e_lem_error7}) by $(e_{q}^{n+1}+e_{q}^{n})$ leads to 
\begin{equation}\label{e_lem_error10}
\aligned
&\frac{(e_{q}^{n+1})^2-(e_{q}^{n})^2}{\Delta t}=\frac{e_{q}^{n+1/2}}{B^{n+1/2}}(\mathcal{P}_h\tilde{\textbf{U}}^{n+1/2}\cdot\nabla_h(\mathcal{P}_h\tilde{\textbf{U}}^{n+1/2}),\bm{\xi}^{n+1/2})_{l^2}\\
&\ \ \ \ \ \ \ \ 
+2\sum\limits_{k=3}^{5}S_k^{n+1/2}e_{q}^{n+1/2}.
\endaligned
\end{equation}
Combining \eqref{e_lem_error8} with \eqref{e_lem_error9}, we have 
\begin{equation}\label{e_lem_error11}
\aligned
&(d_t\bm{\xi}^{n+1},\bm{\xi}^{n+1/2})_{l^2}+\nu  \|D \bm{\xi}^{n+1/2}\|^2-(\eta^{n+1/2},d_x\xi^{n+1/2}_1+d_y\xi^{n+1/2}_2)_{l^2,M}\\
& \ \ \ \ \ \ \ 
=(\textbf{T}^{n+1/2},\bm{\xi}^{n+1/2})_{l^2},
\endaligned
\end{equation}
where $\textbf{T}=(T_1,T_2)$.
Subtracting \eqref{e38} from \eqref{e_full_discrete4}, we obtain
\begin{equation}\label{e_lem_error12}
\aligned
d_x\xi_1^{n+1}+d_y\xi_2^{n+1}=0,\ \ 0\leq i\leq N_x-1,0\leq j\leq N_y-1.
\endaligned
\end{equation}
Thus we have 
\begin{equation}\label{e_lem_error13}
\aligned
(\eta^{n+1/2},d_x\xi^{n+1/2}_1+d_y\xi^{n+1/2}_2)_{l^2,M}=0.
\endaligned
\end{equation}
The term on the right hand side of \eqref{e_lem_error11} can be recast as 
\begin{equation}\label{e_lem_error14}
\aligned
(\textbf{T}^{n+1/2},\bm{\xi}^{n+1/2})_{l^2}=&-\frac{e_q^{n+1/2}}{B^{n+1/2}}(\mathcal{P}_h\tilde{\textbf{U}}^{n+1/2}\cdot\nabla_h(\mathcal{P}_h\tilde{\textbf{U}}^{n+1/2}),\bm{\xi}^{n+1/2})_{l^2}\\
&-\frac{q^{n+1/2}}{B^{n+1/2}}(\mathcal{P}_h\tilde{\textbf{U}}^{n+1/2}\cdot\nabla_h(\mathcal{P}_h\tilde{\textbf{U}}^{n+1/2}),\bm{\xi}^{n+1/2})_{l^2}\\
&+\frac{q^{n+1/2}}{\sqrt{E(\textbf{u}^{n+1/2})+\delta}}(\textbf{u}^{n+1/2}\cdot\nabla\textbf{u}^{n+1/2},\bm{\xi}^{n+1/2})_{l^2}.
\endaligned
\end{equation}
The last two terms on the right hand side of \eqref{e_lem_error14} can be transformed into the following:
\begin{equation}\label{e_lem_error15}
\aligned
&\frac{q^{n+1/2}}{\sqrt{E(\textbf{u}^{n+1/2})+\delta}}(\textbf{u}^{n+1/2}\cdot\nabla\textbf{u}^{n+1/2},\bm{\xi}^{n+1/2})_{l^2}\\
&-\frac{q^{n+1/2}}{B^{n+1/2}}(\mathcal{P}_h\tilde{\textbf{U}}^{n+1/2}\cdot\nabla_h(\mathcal{P}_h\tilde{\textbf{U}}^{n+1/2}),\bm{\xi}^{n+1/2})_{l^2}\\
\leq &(\textbf{u}^{n+1/2}\cdot\nabla\textbf{u}^{n+1/2},\bm{\xi}^{n+1/2})_{l^2}(\frac{q^{n+1/2}}{\sqrt{E(\textbf{u}^{n+1/2})+\delta}}-\frac{q^{n+1/2}}{B^{n+1/2}})\\
&-\frac{q^{n+1/2}}{B^{n+1/2}}(\mathcal{P}_h\tilde{\textbf{U}}^{n+1/2}\cdot\nabla_h(\mathcal{P}_h\tilde{\textbf{u}}^{n+1/2})-\textbf{u}^{n+1/2}\cdot \nabla \textbf{u}^{n+1/2},\bm{\xi}^{n+1/2})_{l^2}\\
&-\frac{q^{n+1/2}}{B^{n+1/2}}(\mathcal{P}_h\tilde{\textbf{U}}^{n+1/2}\cdot\nabla_h(\mathcal{P}_h\tilde{\bm\gamma}^{n+1/2}),\bm{\xi}^{n+1/2})_{l^2}\\
&-\frac{q^{n+1/2}}{B^{n+1/2}}(\mathcal{P}_h\tilde{\textbf{U}}^{n+1/2}\cdot\nabla_h(\mathcal{P}_h\tilde{\bm\xi}^{n+1/2}),\bm{\xi}^{n+1/2})_{l^2}.
\endaligned
\end{equation}
Recalling the midpoint approximation property of the rectangle quadrature formula and using the Cauchy-Schwarz inequality, the first term on the right hand side of \eqref{e_lem_error15} can be estimated as
\begin{equation}\label{e_lem_error16}
\aligned
(\textbf{u}^{n+1/2}&\cdot\nabla\textbf{u}^{n+1/2},\bm{\xi}^{n+1/2})_{l^2}(\frac{q^{n+1/2}}{\sqrt{E(\textbf{u}^{n+1/2})+\delta}}-\frac{q^{n+1/2}}{B^{n+1/2}})\\
\leq &C(L_{n}) q^{n+1/2}\|\bm\xi^{n+1/2}\|_{l^2}|E_h(\tilde{\textbf{U}}^{n+1/2})-E(\textbf{u}^{n+1/2})|\\
\leq &C(L_{n})(\|\bm\xi^{n+1}\|^2_{l^2}+\|\bm\xi^{n}\|^2_{l^2}+\|\bm\xi^{n-1}\|^2_{l^2})\\
&+C(L_{n})(\|\bm\gamma^{n}\|^2_{l^2}+\|\bm\gamma^{n-1}\|^2_{l^2})+C(L_{n})(\Delta t^4+h^4+k^4).
\endaligned
\end{equation}
Using the Cauchy-Schwarz inequality, the second term on the right hand side of \eqref{e_lem_error15} can be estimated as
\begin{equation}\label{e_lem_error17}
\aligned
-\frac{q^{n+1/2}}{B^{n+1/2}}&(\mathcal{P}_h\tilde{\textbf{U}}^{n+1/2}\cdot\nabla_h(\mathcal{P}_h\tilde{\textbf{u}}^{n+1/2})-\textbf{u}^{n+1/2}\cdot \nabla \textbf{u}^{n+1/2},\bm{\xi}^{n+1/2})_{l^2}\\
= &-\frac{q^{n+1/2}}{B^{n+1/2}}\left((\mathcal{P}_h\tilde{\textbf{U}}^{n+1/2}-\textbf{u}^{n+1/2})
\cdot\nabla_h(\mathcal{P}_h\tilde{\textbf{u}}^{n+1/2}),\bm{\xi}^{n+1/2}\right)_{l^2}\\
&-\frac{q^{n+1/2}}{B^{n+1/2}}(
\textbf{u}^{n+1/2}\cdot \nabla (\mathcal{P}_h\tilde{\textbf{u}}^{n+1/2}-\textbf{u}^{n+1/2}),\bm{\xi}^{n+1/2})_{l^2}\\
\leq &C(L_{n})(\|\bm\xi^{n+1}\|^2_{l^2}+\|\bm\xi^{n}\|^2_{l^2}+\|\bm\xi^{n-1}\|^2_{l^2})\\
&+C(L_{n})(\|\bm\gamma^{n}\|^2_{l^2}+\|\bm\gamma^{n-1}\|^2_{l^2})+C(L_{n})(\Delta t^4+h^4+k^4).
\endaligned
\end{equation}
Recalling Lemma \ref{lemma:U-P-Relation}, the third term on the right hand side of \eqref{e_lem_error15} can be controlled by
\begin{equation}\label{e_lem_error18}
\aligned
-\frac{q^{n+1/2}}{B^{n+1/2}}&(\mathcal{P}_h\tilde{\textbf{U}}^{n+1/2}\cdot\nabla_h(\mathcal{P}_h\tilde{\bm\gamma}^{n+1/2}),\bm{\xi}^{n+1/2})_{l^2}\\
\leq &C(L_{n})|(\nabla_h(\mathcal{P}_h\tilde{\bm\gamma}^{n+1/2}),\bm{\xi}^{n+1/2})_{l^2}|\\
\leq &\frac{\nu}{4}\|D\bm\xi^{n+1/2}\|^2+C(L_n)(\Delta t^4+h^4+k^4).
\endaligned
\end{equation}
The last term on the right hand side of \eqref{e_lem_error15} can be bounded by
\begin{equation}\label{e_lem_error19}
\aligned
-\frac{q^{n+1/2}}{B^{n+1/2}}&(\mathcal{P}_h\tilde{\textbf{U}}^{n+1/2}\cdot\nabla_h(\mathcal{P}_h\tilde{\bm\xi}^{n+1/2}),\bm{\xi}^{n+1/2})_{l^2}\\
\leq &\frac{\nu}{4}\|D\bm\xi^{n+1/2}\|^2+C(L_{n})\|\bm\xi^{n+1/2}\|^2_{l^2}+C(L_{n})(h^4+k^4).
\endaligned
\end{equation}
Combining \eqref{e_lem_error11} with \eqref{e_lem_error12}-\eqref{e_lem_error19} results in
\begin{equation}\label{e_lem_error20}
\aligned
\frac{\|\bm{\xi}^{n+1}\|_{l^2}^2-\|\bm{\xi}^{n}\|_{l^2}^2}{2\Delta t}&+\nu  \|D \bm{\xi}^{n+1/2}\|^2+\frac{e_q^{n+1/2}}{B^{n+1/2}}(\mathcal{P}_h\tilde{\textbf{U}}^{n+1/2}\cdot\nabla_h(\mathcal{P}_h\tilde{\textbf{U}}^{n+1/2}),\bm{\xi}^{n+1/2})_{l^2}\\
\leq &C(L_{n})(\|\bm\xi^{n+1}\|^2_{l^2}+\|\bm\xi^{n}\|^2_{l^2}+\|\bm\xi^{n-1}\|^2_{l^2})+C(L_{n})(\|\bm\gamma^{n}\|^2_{l^2}+\|\bm\gamma^{n-1}\|^2_{l^2})\\
&+\frac{\nu}{2}\|D\bm\xi^{n+1/2}\|^2+C(L_{n})(\Delta t^4+h^4+k^4).
\endaligned
\end{equation}
Next we estimate the last term on the right hand side of \eqref{e_lem_error10} by
\begin{equation}\label{e_lem_error21}
\aligned
&2S_1^{n+1/2}e_{q}^{n+1/2}\leq C(|e_q^{n+1}|^2+|e_q^{n}|^2)+C\|q\|_{W^{3}_{\infty}(J)}^2\Delta t^4.
\endaligned
\end{equation}

\begin{equation}\label{e_lem_error22}
\aligned
2S_2^{n+1/2}e_{q}^{n+1/2}= &\frac{e_{q}^{n+1/2}}{B^{n+1/2}}(\mathcal{P}_h\tilde{\textbf{U}}^{n+1/2}\cdot\nabla_h(\mathcal{P}_h(\tilde{\bm\xi}^{n+1/2}+\tilde{\bm\gamma}^{n+1/2}),\textbf{W}^{n+1/2})_{l^2}\\
&+\frac{e_{q}^{n+1/2}}{B^{n+1/2}}(\mathcal{P}_h\tilde{\textbf{U}}^{n+1/2}\cdot\nabla_h(\mathcal{P}_h\tilde{\textbf{u}}^{n+1/2}),\textbf{W}^{n+1/2})_{l^2}\\
&-\frac{e_{q}^{n+1/2}}{\sqrt{E(\textbf{u}^{n+1/2})+\delta}}\int_{\Omega}\textbf{u}^{n+1/2}\cdot \nabla\textbf{u}^{n+1/2}\cdot \textbf{u}^{n+1/2}d\textbf{x}.
\endaligned
\end{equation}
The analysis of the first term on the right hand side of \eqref{e_lem_error22} can be carried out with the help of Lemmas \ref{lemma:U-P-Relation} and \ref{le_auxiliary}:
\begin{equation}\label{e_lem_error24}
\aligned
\frac{e_{q}^{n+1/2}}{B^{n+1/2}}&(\mathcal{P}_h\tilde{\textbf{U}}^{n+1/2}\cdot\nabla_h(\mathcal{P}_h(\tilde{\bm\xi}^{n+1/2}+\tilde{\bm\gamma}^{n+1/2}),\textbf{W}^{n+1/2})_{l^2}\\
\leq & C(L_{n})|e_{q}^{n+1/2}||(\nabla_h(\mathcal{P}_h(\tilde{\bm\xi}^{n+1/2}+\tilde{\bm\gamma}^{n+1/2}),\textbf{W}^{n+1/2})_{l^2}|\\
\leq & C(L_{n})|e_{q}^{n+1/2}|\|\tilde{\bm\xi}^{n+1/2}+\tilde{\bm\gamma}^{n+1/2}\|_{l^2}\|D\textbf{W}^{n+1/2}\|\\
\leq &C(L_{n})|e_{q}^{n+1/2}|^2+C(L_{n})(\|\bm\xi^{n}\|_{l^2}^2+\|\bm\xi^{n-1}\|_{l^2}^2)\\
&+C(L_{n})(\Delta t^4+h^4+k^4).
\endaligned
\end{equation}
The last two terms on the right hand side of \eqref{e_lem_error22} can be handled in a similar way as \eqref{e_lem_error16}:
\begin{equation}\label{e_lem_error25}
\aligned
\frac{e_{q}^{n+1/2}}{B^{n+1/2}}&(\mathcal{P}_h\tilde{\textbf{U}}^{n+1/2}\cdot\nabla_h(\mathcal{P}_h\tilde{\textbf{u}}^{n+1/2}),\textbf{W}^{n+1/2})_{l^2}\\
&-\frac{e_{q}^{n+1/2}}{\sqrt{E(\textbf{u}^{n+1/2})+\delta}}\int_{\Omega}\textbf{u}^{n+1/2}\cdot \nabla\textbf{u}^{n+1/2}\cdot \textbf{u}^{n+1/2}d\textbf{x}\\
\leq &C(L_{n})(\|\bm\xi^{n}\|^2_{l^2}+\|\bm\xi^{n-1}\|^2_{l^2})+
C(L_{n})|e_{q}^{n+1/2}|^2\\
&+C(L_{n})(\|\bm\gamma^{n}\|^2_{l^2}+\|\bm\gamma^{n-1}\|^2_{l^2})\\
&+C(L_{n})(\Delta t^4+h^4+k^4),
\endaligned
\end{equation}
where we use the fact that $\|\textbf{W}^{n+1/2}\|_{\infty}\leq C$ with the aid of Lemma 
 \ref{le_auxiliary} and the inverse assumption.

Recalling Lemma \ref{lem: boundedness of L2 norm} and using Cauchy-Schwarz inequality, we have
\begin{equation}\label{e_lem_error23}
\aligned
2S_3^{n+1/2}e_{q}^{n+1/2}= &\frac{e_{q}^{n+1/2}}{Q^{n+1/2}}(d_t\textbf{U}^{n+1},\textbf{U}^{n+1/2})_{l^2}-
\frac{e_{q}^{n+1/2}}{q^{n+1/2}}\int_{\Omega}\frac{\partial \textbf{u}^{n+1/2}}{\partial t}\cdot \textbf{u}^{n+1/2}d\textbf{x}\\
=&\frac{e_{q}^{n+1/2}}{Q^{n+1/2}}(d_t(\bm\xi^{n+1}+\bm\gamma^{n+1}),\textbf{U}^{n+1/2})_{l^2}
+\frac{e_{q}^{n+1/2}}{Q^{n+1/2}}(d_t\textbf{u}^{n+1},\bm\xi^{n+1/2}+\bm\gamma^{n+1/2})_{l^2}\\
&+\frac{e_{q}^{n+1/2}}{Q^{n+1/2}}(d_t\textbf{u}^{n+1},\textbf{u}^{n+1/2})_{l^2}-\frac{e_{q}^{n+1/2}}{q^{n+1/2}}\int_{\Omega}\frac{\partial \textbf{u}^{n+1/2}}{\partial t}\cdot \textbf{u}^{n+1/2}d\textbf{x}\\
\leq & \frac{1}{\kappa}C(L_n)|e_{q}^{n+1/2}|^2+\frac{1}{2}\|d_t\bm\xi^{n+1}\|_{l^2}^2+\frac{1}{\kappa}C(L_n)\|d_t\bm\gamma^{n+1}\|_{l^2}^2\\
&+C\|\bm\xi^{n+1}\|_{l^2}^2+C\|\bm\gamma^{n+1}\|_{l^2}^2+\frac{1}{\kappa}C(L_n)(\Delta t^4+h^4+k^4).
\endaligned
\end{equation}
Combining \eqref{e_lem_error10} with \eqref{e_lem_error21}-\eqref{e_lem_error23} leads to
\begin{equation}\label{e_lem_error26}
\aligned
&\frac{(e_{q}^{n+1})^2-(e_{q}^{n})^2}{\Delta t}
\leq \frac{e_{q}^{n+1/2}}{B^{n+1/2}}(\mathcal{P}_h\tilde{\textbf{U}}^{n+1/2}\cdot\nabla_h(\mathcal{P}_h\tilde{\textbf{U}}^{n+1/2}),\bm{\xi}^{n+1/2})_{l^2}\\
&\ \ \ \ \ \ \ 
+C(L_{n})(|e_q^{n+1}|^2+|e_q^{n}|^2)
+\frac{1}{2}\|d_t\bm\xi^{n+1}\|_{l^2}^2\\
&\ \ \ \ \ \ \ 
+C(L_{n})(\|\bm\xi^{n}\|_{l^2}^2+\|\bm\xi^{n-1}\|_{l^2}^2)\\
&\ \ \ \ \ \ \ 
+C(L_n)(\Delta t^4+h^4+k^4).
\endaligned
\end{equation}
Then by combining \eqref{e_lem_error20} with \eqref{e_lem_error26}, we can obtain
\begin{equation}\label{e_lem_error27}
\aligned
\frac{\|\bm{\xi}^{n+1}\|_{l^2}^2-\|\bm{\xi}^{n}\|_{l^2}^2}{2\Delta t}&+\frac{\nu}{2}\|D \bm{\xi}^{n+1/2}\|^2+\frac{(e_{q}^{n+1})^2-(e_{q}^{n})^2}{\Delta t}\\
\leq &C(L_{n})(\|\bm\xi^{n+1}\|^2_{l^2}+\|\bm\xi^{n}\|^2_{l^2}+\|\bm\xi^{n-1}\|^2_{l^2})
+\frac{1}{2}\|d_t\bm\xi^{n+1}\|_{l^2}^2\\
&+C(L_{n})(|e_q^{n+1}|^2+|e_q^{n}|^2)+C(L_{n})(\Delta t^4+h^4+k^4).
\endaligned
\end{equation}
Then we can obtain the desired result \eqref{e_error_estimate_u} by multiplying equation (\ref{e_lem_error27}) by $\Delta t$ and summing over $n$ from $0$ to $m$.
\end{proof}
\medskip

\begin{lemma}\label{lem: error_estimate_dtu}
Assuming $\textbf{u}\in W^{3}_{\infty}(J;W^{4}_{\infty}(\Omega))^2$, $p\in W^{3}_{\infty}(J;W^{3}_{\infty}(\Omega))$, then we have
\begin{equation}\label{e_error_estimate_dtu}
\aligned
\sum_{n=0}^m\Delta t\|d_t\bm{\xi}^{n+1}\|_{l^2}^2+\frac{\nu}{2}\|D \bm{\xi}^{m+1}\|^2
\leq &C(L_{m})\sum_{n=0}^m\Delta t\|\bm\xi^{n}\|^2_{l^2}+C(L_{m})\sum_{n=0}^m\Delta t
|e_q^{n+1/2}|^2\\
&+C(L_{m})\sum_{n=0}^m\Delta t\|D\bm\xi^{n}\|^2+C(L_{m})(\Delta t^4+h^4+k^4).
\endaligned
\end{equation}
where  $\bm{\xi}^k$ and $e_{q}^{k}$ are defined in \eqref{errors}, and the positive constant $C(L_{m})$ is independent of $h$, $k$ and $\Delta t$ but dependent of $L_{m}$.
\end{lemma}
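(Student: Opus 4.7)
The plan is to mimic the structure of Lemma~\ref{lem: error_estimate_u}, but with the test function changed from $\bm{\xi}^{n+1/2}$ to $d_t\bm{\xi}^{n+1}$. This swap is designed to produce the stronger $\ell^2$-in-time control on $d_t\bm{\xi}$ and the pointwise-in-time control on $\|D\bm{\xi}^{m+1}\|^2$, at the cost of losing the antisymmetry that made the convective term vanish previously.

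First I would multiply \eqref{e_lem_error5} by $d_t\xi_{1,i,j+1/2}^{n+1}\,hk$ and sum over $1\le i\le N_x-1$, $0\le j\le N_y-1$, and do the analogous manipulation on \eqref{e_lem_error6}. The time-derivative term then yields $\|d_t\bm{\xi}^{n+1}\|_{l^2}^2$ on the left. By Lemma~\ref{lemma:U-P-Relation} (discrete integration by parts), the viscous terms telescope into the form $\frac{\nu}{2\Delta t}(\|D\bm{\xi}^{n+1}\|^2-\|D\bm{\xi}^{n}\|^2)$; this will produce the boundary term $\frac{\nu}{2}\|D\bm{\xi}^{m+1}\|^2$ after summing over $n$. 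The pressure term couples $\eta^{n+1/2}$ with $d_x(d_t\xi_1^{n+1})+d_y(d_t\xi_2^{n+1})$, which vanishes because \eqref{e_lem_error12} is linear in time, so $d_t$ of it is still zero-divergence.

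Next I would estimate the convective residual $(\textbf{T}^{n+1/2},d_t\bm{\xi}^{n+1})_{l^2}$, splitting $\textbf{T}^{n+1/2}$ exactly as in \eqref{e_lem_error14}. Each of the four pieces is bounded by Cauchy--Schwarz and Young's inequality: (i) the factor $e_q^{n+1/2}/B^{n+1/2}$ multiplying a uniformly bounded discrete convection gives $C(L_m)|e_q^{n+1/2}|^2+\tfrac{1}{8}\|d_t\bm{\xi}^{n+1}\|_{l^2}^2$; (ii) the truncation piece comparing $E_h(\tilde{\textbf{U}})$ with $E(\textbf{u})$ is absorbed into $C(L_m)\|\bm{\xi}^{n}\|_{l^2}^2+C(L_m)(\Delta t^4+h^4+k^4)$ exactly as in \eqref{e_lem_error16}--\eqref{e_lem_error17}, where the consistency part uses Lemma~\ref{le_auxiliary}; (iii) the term involving $\mathcal{P}_h\tilde{\bm\gamma}^{n+1/2}$ is controlled using $L^\infty$-boundedness of $\mathcal{P}_h\tilde{\textbf{U}}^{n+1/2}$ (which follows from $L_m$) plus \eqref{e39}; (iv) the genuinely nonlinear piece $\frac{q^{n+1/2}}{B^{n+1/2}}(\mathcal{P}_h\tilde{\textbf{U}}^{n+1/2}\cdot\nabla_h \mathcal{P}_h\tilde{\bm{\xi}}^{n+1/2},d_t\bm{\xi}^{n+1})_{l^2}$ is bounded by $C(L_m)\|D\tilde{\bm{\xi}}^{n+1/2}\|^2+\tfrac{1}{8}\|d_t\bm{\xi}^{n+1}\|_{l^2}^2$, which with $\tilde{\bm{\xi}}^{n+1/2}=(3\bm{\xi}^n-\bm{\xi}^{n-1})/2$ contributes $C(L_m)(\|D\bm{\xi}^n\|^2+\|D\bm{\xi}^{n-1}\|^2)$.

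After absorbing the small $\|d_t\bm{\xi}^{n+1}\|_{l^2}^2$ contributions into the left-hand side, I would multiply by $\Delta t$ and sum over $n=0,\dots,m$; the telescoping $\nu\|D\bm{\xi}\|^2$ terms give the desired $\frac{\nu}{2}\|D\bm{\xi}^{m+1}\|^2$, while the right-hand side collapses to exactly the form claimed. The main obstacle is item (iv): unlike Lemma~\ref{lem: error_estimate_u}, testing with $d_t\bm{\xi}^{n+1}$ destroys the convection antisymmetry, so the only way to control that term without introducing $\|\nabla d_t\bm{\xi}^{n+1}\|$ (which is not available on the left) is to use the $L^\infty$ bound $L_m$ on $\mathcal{P}_h\tilde{\textbf{U}}^{n+1/2}$ to move all derivatives onto $\mathcal{P}_h\tilde{\bm{\xi}}^{n+1/2}$ via Cauchy--Schwarz. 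This is what forces the $\sum\Delta t\|D\bm{\xi}^n\|^2$ term to appear on the right of \eqref{e_error_estimate_dtu}, and explains why this lemma must eventually be combined with Lemma~\ref{lem: error_estimate_u} (via a Gronwall-type argument) rather than standing on its own.
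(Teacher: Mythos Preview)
Your overall strategy---test with $d_t\bm{\xi}^{n+1}$, telescope the viscous term, use divergence-freeness to kill the pressure---is exactly what the paper does, and your treatment of items (i), (ii), (iv) matches the paper's \eqref{e_lemdtu_5}--\eqref{e_lemdtu_7} essentially line for line.

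The gap is in item (iii), the $\tilde{\bm\gamma}$ piece. You propose to bound
\[
\frac{q^{n+1/2}}{B^{n+1/2}}\bigl(\mathcal{P}_h\tilde{\textbf{U}}^{n+1/2}\cdot\nabla_h(\mathcal{P}_h\tilde{\bm\gamma}^{n+1/2}),\,d_t\bm{\xi}^{n+1}\bigr)_{l^2}
\]
directly by Cauchy--Schwarz, citing \eqref{e39}. But \eqref{e39} only controls $\|\bm\gamma\|_{l^2}$ and $\|d_t\bm\gamma\|_{l^2}$, not $\|\nabla_h\mathcal{P}_h\bm\gamma\|$. The available gradient bounds from Lemma~\ref{le_auxiliary} include \eqref{e132}--\eqref{e134}, which are only of order $h^{3/2}$ or $k^{3/2}$ in the cross-derivative components; squared, this yields $O(h^3+k^3)$, not the $O(h^4+k^4)$ you need on the right of \eqref{e_error_estimate_dtu}. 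And you cannot integrate by parts in space to move the gradient onto $d_t\bm{\xi}^{n+1}$, since $\|D\,d_t\bm{\xi}^{n+1}\|$ is not available on the left.

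The paper's fix (see \eqref{e_lemdtu_10}) is to postpone this term, sum in $n$ first, and then perform a discrete summation by parts \emph{in time}: $\sum_n(\nabla_h\mathcal{P}_h\tilde{\bm\gamma}^{n+1/2},d_t\bm{\xi}^{n+1})$ becomes a boundary term $(\nabla_h\mathcal{P}_h\tilde{\bm\gamma}^{m+1/2},\bm{\xi}^{m+1})$ plus $\sum_n(\nabla_h d_t\mathcal{P}_h\tilde{\bm\gamma}^{n+1/2},\bm{\xi}^{n})$. Now the pairing is against $\bm{\xi}$ rather than $d_t\bm{\xi}$, so one \emph{can} integrate by parts in space, landing on $\|d_t\tilde{\bm\gamma}\|_{l^2}\|D\bm{\xi}\|$ and $\|\tilde{\bm\gamma}\|_{l^2}\|D\bm{\xi}^{m+1}\|$; both $\bm\gamma$-factors are genuinely $O(\Delta t^2+h^2+k^2)$ by \eqref{e39}. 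This is also why the paper's final inequality carries the extra $\frac{\nu}{2}\|D\bm{\xi}^{m+1}\|^2$ absorption and why the $\sum\Delta t\|D\bm{\xi}^n\|^2$ term shows up on the right.
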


\begin{proof}
Multiplying (\ref{e_lem_error5}) by $d_t\xi_{1,i,j+1/2}^{n+1}hk$, making summation on $i,j$ for $1\leq i\leq N_x-1,\ 0\leq j\leq N_y-1$ and applying Lemma \ref{lemma:U-P-Relation}, we have 
\begin{equation}\label{e_lemdtu_1}
\aligned
&\|d_t\xi^{n+1}_{1}\|_{l^2,T,M}^2+\frac{\nu}{2}\frac{\|d_x \xi^{n+1}_1\|^2_{l^2,M}-\|d_x \xi^{n}_1\|^2_{l^2,M}}{\Delta t}+\frac{\nu}{2}\frac{\|D_y \xi^{n+1}_1\|^2_{l^2,T_y}-\|D_y \xi^{n}_1\|^2_{l^2,T_y}}{\Delta t}\\
&-(\eta^{n+1/2},d_xd_t\xi^{n+1}_1)_{l^2,M}
=(T_1^{n+1/2},d_t\xi_1^{n+1})_{l^2,T,M}.
\endaligned
\end{equation}
Multiplying (\ref{e_lem_error6}) by $d_t\xi_{2,i+1/2,j}^{n+1}hk$, making summation on $i,j$ for $0\leq i\leq N_x-1,\ 1\leq j\leq N_y-1$ and applying Lemma \ref{lemma:U-P-Relation} lead to
\begin{equation}\label{e_lemdtu_2}
\aligned
&\|d_t\xi^{n+1}_{2}\|^2_{l^2,M,T}+\frac{\nu}{2}\frac{\|d_y \xi^{n+1}_2\|^2_{l^2,M}-\|d_y \xi^{n}_2\|^2_{l^2,M}}{\Delta t}+\frac{\nu}{2}\frac{\|D_x \xi^{n+1}_2\|^2_{l^2,T_x}-\|D_x \xi^{n}_2\|^2_{l^2,T_x}}{\Delta t}\\
&-(\eta^{n+1/2},d_yd_t\xi^{n+1}_2)_{l^2,M}
=(T_2^{n+1/2},d_t\xi_2^{n+1})_{l^2,M,T}.
\endaligned
\end{equation}
Combining \eqref{e_lemdtu_1} with \eqref{e_lemdtu_2}, we have 
\begin{equation}\label{e_lemdtu_3}
\aligned
&\|d_t\bm{\xi}^{n+1}\|_{l^2}^2+\frac{\nu}{2}\frac{\|D \bm{\xi}^{n+1}\|^2-\|D \bm{\xi}^{n}\|^2}{\Delta t}=(\textbf{T}^{n+1/2},d_t\bm{\xi}^{n+1})_{l^2}.
\endaligned
\end{equation}
The right hand side of \eqref{e_lemdtu_3} can be estimated as 
\begin{equation}\label{e_lemdtu_4}
\aligned
(\textbf{T}^{n+1/2},d_t\bm{\xi}^{n+1})_{l^2}
=&(\frac{q^{n+1/2}}{\sqrt{E(\textbf{u}^{n+1/2})+\delta}}-\frac{Q^{n+1/2}}{B^{n+1/2}})
(\textbf{u}^{n+1/2}\cdot\nabla_h\textbf{u}^{n+1/2},d_t\bm{\xi}^{n+1})_{l^2}\\
&-\frac{Q^{n+1/2}}{B^{n+1/2}}(\mathcal{P}_h\tilde{\textbf{U}}^{n+1/2}\cdot\nabla_h\mathcal{P}_h\tilde{\textbf{u}}^{n+1/2}-\textbf{u}^{n+1/2}\cdot\nabla_h\textbf{u}^{n+1/2},d_t\bm{\xi}^{n+1})_{l^2}\\
&-\frac{Q^{n+1/2}}{B^{n+1/2}}(\mathcal{P}_h\tilde{\textbf{U}}^{n+1/2}\cdot\nabla_h\mathcal{P}_h\tilde{\bm\xi}^{n+1/2},d_t\bm{\xi}^{n+1})_{l^2}\\
&-\frac{Q^{n+1/2}}{B^{n+1/2}}(\mathcal{P}_h\tilde{\textbf{U}}^{n+1/2}\cdot\nabla_h\mathcal{P}_h\tilde{\bm\gamma}^{n+1/2},d_t\bm{\xi}^{n+1})_{l^2}.
\endaligned
\end{equation}
The first term on the right hand side of \eqref{e_lemdtu_4} can be handled in a similar way as \eqref{e_lem_error16}:
\begin{equation}\label{e_lemdtu_5}
\aligned
(\frac{q^{n+1/2}}{\sqrt{E(\textbf{u}^{n+1/2})+\delta}}&-\frac{Q^{n+1/2}}{B^{n+1/2}})
(\textbf{u}^{n+1/2}\cdot\nabla_h\textbf{u}^{n+1/2},d_t\bm{\xi}^{n+1})_{l^2}\\
=&(\frac{q^{n+1/2}}{\sqrt{E(\textbf{u}^{n+1/2})+\delta}}-\frac{q^{n+1/2}}{B^{n+1/2}})
(\textbf{u}^{n+1/2}\cdot\nabla_h\textbf{u}^{n+1/2},d_t\bm{\xi}^{n+1})_{l^2}\\
&-\frac{e_q^{n+1/2}}{B^{n+1/2}}(\textbf{u}^{n+1/2}\cdot\nabla_h\textbf{u}^{n+1/2},d_t\bm{\xi}^{n+1})_{l^2}\\
\leq &\frac{1}{6}\|d_t\bm\xi^{n+1}\|^2_{l^2}+C(L_{n})(\|\bm\xi^{n}\|^2_{l^2}+\|\bm\xi^{n-1}\|^2_{l^2})\\
&+C(L_{n})(\|\bm\gamma^{n}\|^2_{l^2}+\|\bm\gamma^{n-1}\|^2_{l^2})+C(L_{n})|e_q^{n+1/2}|^2\\
&+C(L_{n})(\Delta t^4+h^4+k^4).
\endaligned
\end{equation}
Using \eqref{e_boundedness of Q} and the definition of $\mathcal{P}_h$, we can estimate 
the second term on the right hand side of \eqref{e_lemdtu_4} as 
\begin{equation}\label{e_lemdtu_6}
\aligned
-\frac{Q^{n+1/2}}{B^{n+1/2}}&(\mathcal{P}_h\tilde{\textbf{U}}^{n+1/2}\cdot\nabla_h\mathcal{P}_h\tilde{\textbf{u}}^{n+1/2}-\textbf{u}^{n+1/2}\cdot\nabla_h\textbf{u}^{n+1/2},d_t\bm{\xi}^{n+1})_{l^2}\\
\leq &C(L_{n})(\|\bm\xi^{n}\|^2_{l^2}+\|\bm\xi^{n-1}\|^2_{l^2})+\frac{1}{6}\|d_t\bm\xi^{n+1}\|^2_{l^2}\\
&+C(L_{n})(\|\bm\gamma^{n}\|^2_{l^2}+\|\bm\gamma^{n-1}\|^2_{l^2})+C(L_{n})(\Delta t^4+h^4+k^4).
\endaligned
\end{equation}
Applying Cauchy-Schwarz inequality, the third term on the right hand side of \eqref{e_lemdtu_4} can be controlled by
\begin{equation}\label{e_lemdtu_7}
\aligned
-\frac{Q^{n+1/2}}{B^{n+1/2}}&(\mathcal{P}_h\tilde{\textbf{U}}^{n+1/2}\cdot\nabla_h\mathcal{P}_h\tilde{\bm\xi}^{n+1/2},d_t\bm{\xi}^{n+1})_{l^2}\\
\leq& C(L_{n})(\|D\bm\xi^{n}\|^2+\|D\bm\xi^{n-1}\|^2)+\frac{1}{6}\|d_t\bm\xi^{n+1}\|^2_{l^2}\\
&+C(L_n)(h^4+k^4).
\endaligned
\end{equation}
Combining \eqref{e_lemdtu_3} with \eqref{e_lemdtu_4}-\eqref{e_lemdtu_7} yields
\begin{equation}\label{e_lemdtu_8}
\aligned
\|d_t\bm{\xi}^{n+1}\|_{l^2}^2&+\frac{\nu}{2}\frac{\|D \bm{\xi}^{n+1}\|^2-\|D \bm{\xi}^{n}\|^2}{\Delta t}\\
\leq &-\frac{Q^{n+1/2}}{B^{n+1/2}}(\mathcal{P}_h\tilde{\textbf{U}}^{n+1/2}\cdot\nabla_h\mathcal{P}_h\tilde{\bm\gamma}^{n+1/2},d_t\bm{\xi}^{n+1})_{l^2}\\
&+\frac{1}{2}\|d_t\bm\xi^{n+1}\|^2_{l^2}+C(L_{n})(\|\bm\xi^{n}\|^2_{l^2}+\|\bm\xi^{n-1}\|^2_{l^2})\\
&+C(L_{n})(\|\bm\gamma^{n}\|^2_{l^2}+\|\bm\gamma^{n-1}\|^2_{l^2})+C(L_{n})|e_q^{n+1/2}|^2\\
&+C(L_{n})(\|D\bm\xi^{n}\|^2+\|D\bm\xi^{n-1}\|^2)\\
&+C(L_{n})(\Delta t^4+h^4+k^4).
\endaligned
\end{equation}
Multiplying equation (\ref{e_lemdtu_8}) by $2\Delta t$ and summing over $n$ from $0$ to $m$, we have
\begin{equation}\label{e_lemdtu_9}
\aligned
\sum_{n=0}^m&\Delta t\|d_t\bm{\xi}^{n+1}\|_{l^2}^2+\nu\|D \bm{\xi}^{m+1}\|^2\\
\leq &-2\sum_{n=0}^m\Delta t\frac{Q^{n+1/2}}{B^{n+1/2}}(\mathcal{P}_h\tilde{\textbf{U}}^{n+1/2}\cdot\nabla_h\mathcal{P}_h\tilde{\bm\gamma}^{n+1/2},d_t\bm{\xi}^{n+1})_{l^2}\\
&+C(L_{m})\sum_{n=0}^m\Delta t\|\bm\xi^{n}\|^2_{l^2}+C(L_{m})\sum_{n=0}^m\Delta t
|e_q^{n+1/2}|^2\\
&+C(L_{m})\sum_{n=0}^m\Delta t\|D\bm\xi^{n}\|^2+C(L_{m})(\Delta t^4+h^4+k^4).
\endaligned
\end{equation}
From the discrete-integration-by-parts, the first term on the right hand side of \eqref{e_lemdtu_9} can be transformed into 
\begin{equation}\label{e_lemdtu_10}
\aligned
-2\sum_{n=0}^m&\Delta t\frac{Q^{n+1/2}}{B^{n+1/2}}(\mathcal{P}_h\tilde{\textbf{U}}^{n+1/2}\cdot\nabla_h\mathcal{P}_h\tilde{\bm\gamma}^{n+1/2},d_t\bm{\xi}^{n+1})_{l^2}\\
\leq &C(L_{m})|\sum_{n=0}^m\Delta t(\nabla_h\mathcal{P}_h\tilde{\bm\gamma}^{n+1/2},d_t\bm{\xi}^{n+1})_{l^2}|\\
\leq & C(L_{m})|(\nabla_h\mathcal{P}_h\tilde{\bm\gamma}^{m+1/2},\bm{\xi}^{m+1})_{l^2}-\sum_{n=1}^m\Delta t(\nabla_hd_t\mathcal{P}_h\tilde{\bm\gamma}^{n+1/2},\bm{\xi}^{n})_{l^2}|\\
\leq &C(L_{m})\sum_{n=1}^m\Delta t\|d_t\tilde{\bm\gamma}^{n+1/2}\|_{l^2}^2+C(L_{m})\sum_{n=1}^m\Delta t\|D\bm\xi^{n}\|^2\\
&+\frac{\nu}{2}\|D\bm\xi^{m+1}\|_{l^2}^2+C(L_{m})(\Delta t^4+h^4+k^4).
\endaligned
\end{equation}
Substituting \eqref{e_lemdtu_10} into \eqref{e_lemdtu_9} leads to
\begin{equation}\label{e_lemdtu_11}
\aligned
\sum_{n=0}^m&\Delta t\|d_t\bm{\xi}^{n+1}\|_{l^2}^2+\frac{\nu}{2}\|D \bm{\xi}^{m+1}\|^2\\
\leq &C(L_{m})\sum_{n=0}^m\Delta t\|\bm\xi^{n}\|^2_{l^2}+C(L_{m})\sum_{n=0}^m\Delta t
|e_q^{n+1/2}|^2\\
&+C(L_{m})\sum_{n=0}^m\Delta t\|D\bm\xi^{n}\|^2+C(L_{m})(\Delta t^4+h^4+k^4).
\endaligned
\end{equation}
\end{proof}
\medskip

\begin{lemma}\label{lem: error_estimate_p}
Assuming $\textbf{u}\in W^{3}_{\infty}(J;W^{4}_{\infty}(\Omega))^2$, $p\in W^{3}_{\infty}(J;W^{3}_{\infty}(\Omega))$, we have
\begin{equation}\label{e_error_estimate_p}
\aligned
\sum_{n=0}^{m}\Delta t\|\eta^{n+1/2}\|_{l^2,M}^2
\leq& C(L_{m})\sum_{n=0}^{m}\Delta t\|d_t\bm\xi^{n+1}\|_{l^2}^2\\
&+C(L_m)\sum_{n=0}^{m}\Delta t\|D\bm\xi^{n+1/2}\|^2+
C(L_{m})\sum_{n=0}^{m}\Delta t\|\bm\xi^{n}\|^2_{l^2}\\
&+C(L_{m})\sum_{n=0}^m\Delta t
|e_q^{n+1/2}|^2+C(L_{m})(\Delta t^4+h^4+k^4).
\endaligned
\end{equation}
where $\eta^k,\,\bm{\xi}^k$ and $e_{q}^{k}$ are defined in \eqref{errors}, and  the positive constant $C(L_{m})$ is independent of $h$, $k$ and $\Delta t$ but dependent of $L_{m}$.
\end{lemma}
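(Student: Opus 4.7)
The plan is to derive \eqref{e_error_estimate_p} from the discrete LBB condition of Lemma \ref{le_LBB}. For each $n$, after shifting $\eta^{n+1/2}$ to have zero mean so that $\eta^{n+1/2}\in W_h$, the inf-sup inequality supplies a (near) maximizer $\textbf{v}_h^n\in\textbf{V}_h$ satisfying
$$\beta\|\eta^{n+1/2}\|_{l^2,M}\|D\textbf{v}_h^n\|\le 2\,b_h(\textbf{v}_h^n,\eta^{n+1/2}).$$
The task thus reduces to bounding $b_h(\textbf{v}_h^n,\eta^{n+1/2})$ by $\|D\textbf{v}_h^n\|$ times a quantity whose square, summed against $\Delta t$, matches the right-hand side of \eqref{e_error_estimate_p}.

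Multiplying the error equations \eqref{e_lem_error5}-\eqref{e_lem_error6} componentwise by $\textbf{v}_h^n$ at the staggered nodes, summing by parts via Lemma \ref{lemma:U-P-Relation}, and using the compatibility of the MAC divergence with $b_h$ on $\textbf{V}_h\times W_h$, I arrive at
$$b_h(\textbf{v}_h^n,\eta^{n+1/2})=-(d_t\bm\xi^{n+1},\textbf{v}_h^n)_{l^2}-\nu(D\bm\xi^{n+1/2},D\textbf{v}_h^n)+(\textbf{T}^{n+1/2},\textbf{v}_h^n)_{l^2}.$$
Cauchy-Schwarz combined with the discrete Poincar\'e inequality $\|\textbf{v}_h^n\|_{l^2}\le C\|D\textbf{v}_h^n\|$ controls the first two contributions by $C(\|d_t\bm\xi^{n+1}\|_{l^2}+\|D\bm\xi^{n+1/2}\|)\|D\textbf{v}_h^n\|$, which accounts for the first two sums on the right-hand side of \eqref{e_error_estimate_p}.

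For the SAV nonlinear contribution $(\textbf{T}^{n+1/2},\textbf{v}_h^n)_{l^2}$, I apply the same three-piece decomposition used in \eqref{e_lem_error14}-\eqref{e_lem_error15}: isolate the $e_q^{n+1/2}$ part, the $1/\sqrt{E(\textbf{u}^{n+1/2})+\delta}-1/B^{n+1/2}$ part (controlled by $E_h(\tilde{\textbf{U}}^{n+1/2})-E(\textbf{u}^{n+1/2})$ and hence by $\|\bm\xi^n\|_{l^2}+\|\bm\xi^{n-1}\|_{l^2}+\Delta t^2+h^2+k^2$ after using Lemma \ref{le_auxiliary}), and the consistency error between $\mathcal{P}_h\tilde{\textbf{U}}\cdot\nabla_h\mathcal{P}_h\tilde{\textbf{U}}$ and $\textbf{u}\cdot\nabla\textbf{u}$. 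Each piece is bounded by $C(L_n)\|D\textbf{v}_h^n\|$ times $\|\bm\xi^n\|_{l^2}+\|\bm\xi^{n-1}\|_{l^2}+|e_q^{n+1/2}|+\Delta t^2+h^2+k^2$, extracting the factor $\|D\textbf{v}_h^n\|$ via Poincar\'e whenever $\textbf{v}_h^n$ appears in an $l^2$ pairing, and using $|Q^{n+1/2}/B^{n+1/2}|\le C$ from \eqref{e_boundedness of Q} together with the $L^\infty$ bound on $\textbf{W}^{n+1/2}$ (Lemma \ref{le_auxiliary} plus the inverse inequality). Dividing by $\|D\textbf{v}_h^n\|$, squaring, multiplying by $\Delta t$, and summing over $n=0,\dots,m$ yields \eqref{e_error_estimate_p}.

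The main obstacle is the bookkeeping of the SAV nonlinear term: every subterm must carry a clean factor of $\|D\textbf{v}_h^n\|$ that cancels against the LBB normalization, and no subterm should generate anything outside the list on the right-hand side of \eqref{e_error_estimate_p}. In particular, the differential operators inside $\textbf{T}^{n+1/2}$ must be routed onto the smooth exact quantities $\tilde{\textbf{u}}^{n+1/2}$ rather than onto $\textbf{v}_h^n$ or $\bm\xi$, so that the resulting bounds involve only $l^2$-norms of the error rather than $D$-norms, which is what produces the $\|\bm\xi^n\|_{l^2}$ rather than $\|D\bm\xi^n\|$ term; the staggered interpolations $\mathcal{P}_h$ present inside the SAV nonlinearity require some care here but reduce to standard MAC pressure-estimate manipulations.
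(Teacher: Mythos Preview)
Your proposal is correct and follows essentially the same route as the paper: test the error equations \eqref{e_lem_error5}--\eqref{e_lem_error6} against an arbitrary $\textbf{v}_h\in\textbf{V}_h$, integrate by parts via Lemma~\ref{lemma:U-P-Relation} to arrive at the identity $(\eta^{n+1/2},d_xv_1+d_yv_2)_{l^2,M}=(d_t\bm\xi^{n+1},\textbf{v}_h)_{l^2}+\nu(D\bm\xi^{n+1/2},D\textbf{v}_h)-(\textbf{T}^{n+1/2},\textbf{v}_h)_{l^2}$, invoke the LBB condition of Lemma~\ref{le_LBB}, and bound $(\textbf{T}^{n+1/2},\textbf{v}_h)_{l^2}$ by reusing the decomposition \eqref{e_lem_error14}--\eqref{e_lem_error15} with the discrete Poincar\'e inequality to extract the factor $\|D\textbf{v}_h\|$. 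Your remarks about shifting $\eta^{n+1/2}$ to mean zero and about routing the discrete gradients onto the smooth quantity $\tilde{\textbf{u}}^{n+1/2}$ (so that only $\|\bm\xi^n\|_{l^2}$, $|e_q^{n+1/2}|$, and consistency errors appear from the nonlinear term) are exactly the details the paper suppresses in its one-line reference to \eqref{e_lem_error14}.
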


\begin{proof}
For a discrete function $\{v^{n+1/2}_{1,i,j+1/2}\}$ such that $v^{n+1/2}_{1,i,j+1/2}|_{\partial \Omega}=0$,  multiplying (\ref{e_lem_error5}) by times $v^{n+1/2}_{1,i,j+1/2}hk$
and make summation for $i,j$ with $i=1,\cdots,N_x-1,~j=0,\cdots,N_y-1$, and recalling Lemma \ref{lemma:U-P-Relation} lead to 
\begin{equation}\label{e_errorp_1}
\aligned
&(d_t\xi^{n+1}_{1},v_{1}^{n+1/2})_{l^2,T,M}+\nu(d_x \xi^{n+1/2}_1,d_x v^{n+1/2}_1)_{l^2,M}+\nu(D_y\xi^{n+1/2}_1,D_yv^{n+1/2}_1)_{l^2,T_y}\\
&-(\eta^{n+1/2},d_xv^{n+1/2}_1)_{l^2,M}
=(T_1^{n+1/2},v_1^{n+1/2})_{l^2,T,M}.
\endaligned
\end{equation}
Similarly in the $y$ direction, we can obtain
\begin{equation}\label{e_errorp_2}
\aligned
&(d_t\xi^{n+1}_{2},v_{2}^{n+1/2})_{l^2,M,T}+\nu(d_y \xi^{n+1/2}_2,d_y v^{n+1/2}_2)_{l^2,M}
+\nu(D_x\xi^{n+1/2}_2,D_xv^{n+1/2}_2)_{l^2,T_x}\\
&-(\eta^{n+1/2},d_yv^{n+1/2}_2)_{l^2,M}
=(T_2^{n+1/2},v_2^{n+1/2})_{l^2,M,T}.
\endaligned
\end{equation}
Combining \eqref{e_errorp_1} with \eqref{e_errorp_2} results in
\begin{equation}\label{e_errorp_3}
\aligned
&(d_t\bm\xi^{n+1},\textbf{v}^{n+1/2})_{l^2}+\nu(D\bm\xi^{n+1/2},D\textbf{v}^{n+1/2})\\
&-(\eta^{n+1/2},d_xv^{n+1/2}_1+d_yv^{n+1/2}_2)_{l^2,M}\\
&=(\textbf{T}^{n+1/2},\textbf{v}^{n+1/2})_{l^2}.
\endaligned
\end{equation}
Using Lemma \ref{le_LBB} and \eqref{e_lem_error14}, and the discrete Poincar$\acute{e}$ inequality, we can obtain
\begin{equation}\label{e_errorp_4}
\aligned
&\beta\|\eta^{n+1/2}\|_{l^2,M}
\leq\sup\limits_{\textbf{v}\in \textbf{V}_h}\frac{(\eta^{n+1/2},d_xv^{n+1/2}_1+d_yv^{n+1/2}_2)_{l^2,M}}{\|D\textbf{v}^{n+1/2}\|}\\
& \ \ \ \ \ \
\leq C\|d_t\bm\xi^{n+1}\|_{l^2}+C\|D\bm\xi^{n+1/2}\|+
C(L_{n})(\|\bm\xi^{n}\|_{l^2}+\|\bm\xi^{n-1}\|_{l^2})\\
& \ \ \ \ \ \ +C(L_{n})|e_q^{n+1/2}|
+C(L_{n})(\|\bm\gamma^{n}\|_{l^2}+\|\bm\gamma^{n-1}\|_{l^2})\\
& \ \ \ \ \ \
+C(L_{n})(\Delta t^2+h^2+k^2).
\endaligned
\end{equation}
Then we can obtain the desired result \eqref{e_error_estimate_p}.
\end{proof}
\medskip

Combing the above results together, we obtain the following results under the $l^{\infty}_{m}(L^{\infty})$ bound assumption:
\begin{lemma}\label{lem: error_estimate_up}
Assuming $\textbf{u}\in W^{3}_{\infty}(J;W^{4}_{\infty}(\Omega))^2$, $p\in W^{3}_{\infty}(J;W^{3}_{\infty}(\Omega))$, we have
\begin{equation}\label{e_error_estimate_up}
\aligned
\|\bm{\xi}^{m+1}\|_{l^2}^2&+\sum_{n=0}^m\Delta t\|d_t\bm{\xi}^{n+1}\|_{l^2}^2+\|D \bm{\xi}^{m+1}\|^2+\sum_{n=0}^{m}\Delta t\|\eta^{n+1/2}\|_{l^2,M}^2
+|e_{q}^{m+1}|^2\\
\leq &C(L_{m})(\Delta t^4+h^4+k^4).
\endaligned
\end{equation}
where $\eta^k,\,\bm{\xi}^k$ and $e_{q}^{k}$ are defined in \eqref{errors}, and   the positive constant $C(L_{m})$ is independent of $h$, $k$ and $\Delta t$ but dependent of $L_m$.
\end{lemma}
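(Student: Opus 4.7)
The plan is to combine Lemmas 3.4, 3.5, 3.6 in a weighted sum, absorb the ``bad'' right-hand side terms into the left-hand side, and then close the estimate via the discrete Gronwall inequality; finally substitute back to get the pressure bound.

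\textbf{Step 1: eliminate the troublesome $d_t\bm\xi$ term.} Lemma 3.4 contains $\tfrac{1}{2}\sum \Delta t \|d_t\bm\xi^{n+1}\|_{l^2}^2$ on the right-hand side, which is not controlled by the left-hand side of Lemma 3.4 alone. The natural remedy is to form a linear combination $\text{(Lemma 3.4)} + \lambda \cdot \text{(Lemma 3.5)}$ with a constant $\lambda > 1$ chosen so that $\lambda \sum \Delta t \|d_t\bm\xi^{n+1}\|_{l^2}^2$ on the left absorbs the $\tfrac12 \sum \Delta t \|d_t\bm\xi^{n+1}\|_{l^2}^2$ on the right, leaving a positive multiple of $\sum \Delta t \|d_t\bm\xi^{n+1}\|_{l^2}^2$ on the left. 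Similarly we get $\|D\bm\xi^{m+1}\|^2$ on the left from Lemma 3.5.

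\textbf{Step 2: rewrite $e_q^{n+1/2}$ in terms of $e_q^{n+1}, e_q^{n}$.} Since $|e_q^{n+1/2}|^2 \le \tfrac12(|e_q^{n+1}|^2+|e_q^n|^2)$ by definition, every occurrence of $|e_q^{n+1/2}|^2$ on the right-hand sides collapses into a sum of $|e_q^k|^2$. Likewise $\|\bm\xi^{n-1}\|_{l^2}^2, \|D\bm\xi^{n-1}\|^2$ etc.\ can be reindexed at the cost of an initial term (which vanishes since $\bm\xi^0 = 0$). After these manipulations the combined inequality takes the schematic form
\begin{equation*}
\|\bm\xi^{m+1}\|_{l^2}^2 + \sum_{n=0}^m \Delta t \|d_t\bm\xi^{n+1}\|_{l^2}^2 + \|D\bm\xi^{m+1}\|^2 + |e_q^{m+1}|^2 \le C(L_m)\sum_{n=0}^m \Delta t \bigl(\|\bm\xi^n\|_{l^2}^2+\|D\bm\xi^n\|^2+|e_q^n|^2\bigr) + C(L_m)(\Delta t^4+h^4+k^4),
\end{equation*}
where we have also used Lemma \ref{le_auxiliary} to bound the $\bm\gamma$ terms (already of order $\Delta t^4+h^4+k^4$) and incorporated them into the consistency error.

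\textbf{Step 3: discrete Gronwall.} Provided $\Delta t$ is small enough that $C(L_m)\Delta t < 1$, we absorb the $n = m+1$ terms on the right into the left and apply the standard discrete Gronwall inequality to the quantity $\Phi^{n} := \|\bm\xi^n\|_{l^2}^2 + \|D\bm\xi^n\|^2 + |e_q^n|^2$. This yields $\Phi^{m+1} + \sum_{n=0}^m \Delta t \|d_t\bm\xi^{n+1}\|_{l^2}^2 \le C(L_m)(\Delta t^4+h^4+k^4)$, which accounts for all terms on the left of \eqref{e_error_estimate_up} except the pressure.

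\textbf{Step 4: pressure bound.} Finally, substitute the bounds just obtained for $\sum \Delta t \|d_t\bm\xi^{n+1}\|_{l^2}^2$, $\sum \Delta t \|D\bm\xi^{n+1/2}\|^2$, $\sum \Delta t \|\bm\xi^n\|_{l^2}^2$, and $\sum \Delta t |e_q^{n+1/2}|^2$ into the right-hand side of Lemma 3.6. This directly gives $\sum_{n=0}^m \Delta t \|\eta^{n+1/2}\|_{l^2,M}^2 \le C(L_m)(\Delta t^4+h^4+k^4)$, completing the proof of \eqref{e_error_estimate_up}. The main delicacy is Step 1 together with Step 3: one must track the constants so that the coefficient of the absorbed $\|d_t\bm\xi^{n+1}\|_{l^2}^2$ is strictly positive, and so that the Gronwall applies only to integer time-step quantities (handled by the half-step-to-integer reindexing in Step 2). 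The rest of the argument is a bookkeeping combination of the three preceding lemmas together with Lemma \ref{le_auxiliary}.
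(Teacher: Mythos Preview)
Your proposal is correct and follows essentially the same approach as the paper: add Lemma~3.4 and Lemma~3.5 (the paper uses $\lambda=1$, which already suffices since the coefficient $1-\tfrac12=\tfrac12$ is positive), apply the discrete Gronwall inequality to the resulting combined estimate, and then substitute back into Lemma~3.6 to obtain the pressure bound. The reindexing in your Step~2 and the smallness assumption on $\Delta t$ in Step~3 are exactly the implicit details behind the paper's one-line ``Combining \eqref{e_error_estimate_u} with \eqref{e_error_estimate_dtu} \ldots\ applying the discrete Gronwall's inequality.''
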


\begin{proof}
Combining \eqref{e_error_estimate_u} with \eqref{e_error_estimate_dtu}, we have
\begin{equation}\label{e_error_estimate_up1}
\aligned
\frac{1}{2}\|\bm{\xi}^{m+1}\|_{l^2}^2&+\frac{1}{2}\sum_{n=0}^m\Delta t\|d_t\bm{\xi}^{n+1}\|_{l^2}^2+\frac{\nu}{2}\|D \bm{\xi}^{m+1}\|^2+|e_{q}^{m+1}|^2\\
\leq&C(L_{m})\sum_{n=0}^m\Delta t\|\bm\xi^{n+1}\|^2_{l^2}+C(L_{m})\sum_{n=0}^m\Delta t|e_q^{n+1}|^2\\
&+C(L_{m})\sum_{n=0}^m\Delta t\|D\bm\xi^{n}\|^2+C(L_{m})(\Delta t^4+h^4+k^4).
\endaligned
\end{equation}
Then applying the discrete Gronwall's inequality, we arrive at the desired result:
\begin{equation}\label{e_error_estimate_up2}
\|\bm{\xi}^{m+1}\|_{l^2}^2+\sum_{n=0}^m\Delta t\|d_t\bm{\xi}^{n+1}\|_{l^2}^2+\|D \bm{\xi}^{m+1}\|^2
+|e_{q}^{m+1}|^2\leq C(L_{m})(\Delta t^4+h^4+k^4).
\end{equation}
Recalling \eqref{e_error_estimate_p}, we have
\begin{equation}\label{e_error_estimate_up3}
\aligned
&\sum_{n=0}^{m}\Delta t\|\eta^{n+1/2}\|_{l^2,M}^2
\leq C(L_{m})\sum_{n=0}^{m}\Delta t\|d_t\bm\xi^{n+1}\|_{l^2}^2+
C(L_{m})\sum_{n=0}^{m}\Delta t\|\bm\xi^{n}\|^2_{l^2}\\
&\ \ \ 
+C(L_{m})\sum_{n=0}^m\Delta t|e_q^{n+1}|^2+C(L_m)\sum_{n=0}^{m}\Delta t\|D\bm\xi^{n+1/2}\|^2+C(L_{m})(\Delta t^4+h^4+k^4)\\
&\ \ \ 
\leq C(L_{m})(\Delta t^4+h^4+k^4).
\endaligned
\end{equation}
\end{proof}

\subsection{Verification of the $l^{\infty}_{m}(L^{\infty})$ bound assumption}
{}\vskip 10pt
 
\begin{lemma}\label{lem: boundedness of maxnorm}
Under the assumptions of Theorem 3.1, there exists a positive constant $C_1$ independent of $h$, $k$ and $\Delta t$ such that 
\begin{equation}\label{e_boundedness of maxnorm U}
\aligned
\|\textbf{U}^{m}\|_{\infty}\leq C_1, \ for \ all \ 0\le m\leq N=T/{\Delta t}.
\endaligned
\end{equation} 
\end{lemma}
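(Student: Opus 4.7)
\noindent\emph{Proof proposal.} The plan is to turn Lemma~\ref{lem: error_estimate_up} from a conditional statement into an unconditional one by a bootstrap induction on $m$, using the fact that the constant $C(L_m)$ there is a continuous function of $L_m$ only. Fix
\[
C_1 := \|\textbf{u}\|_{L^\infty(J;L^\infty(\Omega))} + 1,
\]
which depends only on the exact solution. The base case $m=0$ is immediate, since $\textbf{U}^0$ is the grid evaluation of the smooth datum $\textbf{u}^0$ and therefore $\|\textbf{U}^0\|_\infty \leq \|\textbf{u}^0\|_\infty \leq C_1$.

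For the inductive step, assume $\|\textbf{U}^n\|_\infty \leq C_1$ for all $n \leq m$, so that $L_m \leq C_1$. Then the constant $C(L_m)$ appearing on the right-hand side of Lemma~\ref{lem: error_estimate_up} is bounded by a constant $C_\star$ that depends only on $\textbf{u}$, $\textbf{f}$, $\nu$, $\kappa$, $T$, and $C_1$, but not on $m$, $\Delta t$, $h$, or $k$. Consequently
\[
\|\bm{\xi}^{m+1}\|_{l^2} + \|D\bm{\xi}^{m+1}\| \leq C_\star(\Delta t^2 + h^2 + k^2),
\]
and by Lemma~\ref{le_auxiliary} the same rate holds for $\bm{\gamma}^{m+1} = \textbf{W}^{m+1}-\textbf{u}^{m+1}$.

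Next, convert these discrete $l^2$/$H^1$ bounds into an $l^\infty$ bound by a discrete Sobolev / inverse inequality valid on the MAC staggered grid in two space dimensions, of the form
\[
\|v_h\|_\infty \leq C|\ln \hat h|^{1/2}\bigl(\|v_h\|_{l^2} + \|Dv_h\|\bigr),
\]
applied to both $\bm{\xi}^{m+1}$ and $\bm{\gamma}^{m+1}$ (which vanish on $\partial\Omega$). This gives
\[
\|\bm{\xi}^{m+1}\|_\infty + \|\bm{\gamma}^{m+1}\|_\infty \leq C_\star |\ln \hat h|^{1/2}(\Delta t^2 + h^2 + k^2).
\]
Impose the (mild, a priori) smallness condition
\[
C_\star |\ln \hat h|^{1/2}(\Delta t^2 + h^2 + k^2) \leq \tfrac{1}{2},
\]
which is a condition on the discretization parameters alone and is independent of $m$. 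The triangle inequality
\[
\|\textbf{U}^{m+1}\|_\infty \leq \|\textbf{u}^{m+1}\|_\infty + \|\bm{\gamma}^{m+1}\|_\infty + \|\bm{\xi}^{m+1}\|_\infty \leq \|\textbf{u}\|_{L^\infty(J;L^\infty)} + \tfrac12 \leq C_1
\]
then closes the induction.

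The main obstacle is the apparent circularity: Lemma~\ref{lem: error_estimate_up} is proved \emph{assuming} a uniform bound $L_m$ on $\|\textbf{U}\|_\infty$, and we use its conclusion to verify that very bound. The bootstrap resolves this only because $C_1$ is fixed in terms of the exact solution before the induction starts, which freezes $C(L_m)$ at a value $C_\star$ independent of $m$; otherwise the constant could blow up across time levels. A secondary technical point is the loss introduced by the inverse inequality: the $|\ln \hat h|^{1/2}$ factor (or, if one prefers a cleaner statement, a factor $\hat h^{-1}$ with a corresponding time-step restriction $\Delta t \leq c\sqrt{\hat h}$) must be absorbed by the convergence rate $\Delta t^2+h^2+k^2$, which is why a smallness hypothesis on the discretization parameters is unavoidable. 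Once \eqref{e_boundedness of maxnorm U} is established, substituting $L_m \leq C_1$ back into Lemmas~\ref{lem: error_estimate_u}–\ref{lem: error_estimate_p} together with Lemma~\ref{le_auxiliary} yields Theorem~\ref{thm: error_estimate} through the triangle inequalities $e_\textbf{u}^n = \bm\xi^n + \bm\gamma^n$ and $e_p^n = \eta^n + \zeta^n$.
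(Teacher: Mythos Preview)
Your bootstrap induction is exactly the paper's strategy: freeze a target bound $C_1$ depending only on the exact solution, feed it into Lemma~\ref{lem: error_estimate_up} to get a uniform constant $C_\star$, and recover $\|\textbf{U}^{m+1}\|_\infty\le C_1$ via an $l^2\to l^\infty$ conversion. The difference lies in that last step. The paper inserts the interpolant $\textbf{I}_h\textbf{u}^{m+1}$ and uses the crude inverse inequality $\|v_h\|_\infty\le C\hat h^{-1}\|v_h\|_{l^2}$ on the purely discrete quantity $\textbf{U}^{m+1}-\textbf{I}_h\textbf{u}^{m+1}$, together with the approximation property $\|\textbf{I}_h\textbf{u}^{m+1}-\textbf{u}^{m+1}\|_\infty\le C\hat h$; this yields the restriction $\Delta t\le C_3\hat h$. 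You instead invoke a 2D discrete Sobolev inequality with a $|\ln\hat h|^{1/2}$ factor applied directly to $\bm\xi^{m+1}$ and $\bm\gamma^{m+1}$, which gives a much weaker mesh condition and avoids introducing $\textbf{I}_h$. Your route is cleaner and buys a better CFL-type restriction, but it relies on a discrete Sobolev embedding on the staggered MAC grid that is not stated in the paper and would need to be supplied; the paper's tools are more elementary. One minor slip: Lemma~\ref{le_auxiliary} only gives $\|D\bm\gamma^{m+1}\|\le O(\Delta t^2+h^{3/2}+k^{3/2})$ because of the $D_yu_1$ and $D_xu_2$ components, not full second order, but this is harmless for your argument since $|\ln\hat h|^{1/2}\hat h^{3/2}\to 0$.
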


\begin{proof} 
We proceed in the following two steps using a bootstrap argument.

\textit{\textbf{Step 1}} (Definition of $C_1$): Using the scheme (\ref{e_full_discrete1})-(\ref{e_full_discrete4}) for $n=0$, Lemma \ref{lem: error_estimate_up}, properties of the operator $\textbf{I}_h$ and the inverse assumption, we can get the approximation $\textbf{U}^1$ and the following property:
\begin{equation*}
\aligned
\|\textbf{U}^1\|_{\infty}=& \|\textbf{U}^1-\textbf{I}_h\textbf{u}^1\|_{\infty}+\|\textbf{I}_h\textbf{u}^1-\textbf{u}^1\|_{\infty}+\|\textbf{u}^1\|_{\infty}\\
\leq&C\hat{h}^{-1}\|\textbf{U}^1-\textbf{I}_h\textbf{u}^1\|_{l^2}+\|\textbf{I}_h\textbf{u}^1-\textbf{u}^1\|_{\infty}+\|\textbf{u}^1\|_{\infty}\\
\leq& C\hat{h}^{-1}(\|\bm\xi^1+\bm\gamma^1\|_{l^2}+\|\textbf{I}_h\textbf{u}^1-\textbf{u}^1\|_{l^2})+\|\textbf{I}_h\textbf{u}^1-\textbf{u}^1\|_{\infty}+\|\textbf{u}^1\|_{\infty}\\
\leq& C\hat{h}^{-1}(\Delta t^2+\hat{h}^2)+\|\textbf{u}^1\|_{\infty}\leq C.
\endaligned
\end{equation*}
where $\hat{h}$ and $\Delta t$ are selected such that $\hat{h}^{-1}\Delta t^2$ is sufficiently small.

 Thus define the positive constant $C_1$ independent of $\hat{h}$ and $\Delta t$ such that
\begin{align*}
C_1&\geq \max\{\|\textbf{U}^1\|_{\infty}, 2\|\textbf{u}^{n}\|_{\infty}\}.
\end{align*}

\textit{\textbf{Step 2}} (Induction): We can easily obtain that hypothesis (\ref{e_boundedness of maxnorm U}) holds true for $m=1$ by the definition of $C_1$.  Supposing that $\|\textbf{U}^{m}\|_{\infty}\leq C_1$ holds true for an integer $m=1,\cdots,N-1$ and using Lemma \ref{lem: error_estimate_up}, we obtain
$$\|\bm\xi^{m+1}\|_{l^2}\leq C(L_m)(\Delta t^2+\hat{h}^2).$$
Next we prove that $\|\textbf{U}^{m+1}\|_{\infty}\leq C_1$ holds true.
Since
\begin{equation}\label{e_hypothesis_proof1}
\aligned
\|\textbf{U}^{m+1}\|_{\infty}=& \|\textbf{U}^{m+1}-\textbf{I}_h\textbf{u}^{m+1}\|_{\infty}+\|\textbf{I}_h\textbf{u}^{m+1}-\textbf{u}^{m+1}\|_{\infty}+\|\textbf{u}^{m+1}\|_{\infty}\\
\leq& C\hat{h}^{-1}(\|\bm\xi^{m+1}+\bm\gamma^{m+1}\|_{l^2}+\|\textbf{I}_h\textbf{u}^{m+1}-\textbf{u}^{m+1}\|_{l^2})\\
&+\|\textbf{I}_h\textbf{u}^{m+1}-\textbf{u}^{m+1}\|_{\infty}+\|\textbf{u}^{m+1}\|_{\infty}\\
\leq& C_2\hat{h}^{-1}(\Delta t^2+\hat{h}^2)+\|\textbf{u}^{m+1}\|_{\infty}.
\endaligned
\end{equation}
Let $\Delta t\leq C_3\hat{h}$ and a positive constant $\hat{h}_*$ be small enough to satisfy
$$C_2(1+C_3^2)\hat{h}_*\leq\frac{C_1}{2}.$$
Then for $\hat{h}\in (0,\hat{h}_*],$ equation (\ref{e_hypothesis_proof1}) can be controlled by
\begin{equation}\label{e_hypothesis_proof2}
\aligned
\|\textbf{U}^{m+1}\|_{\infty}\leq& C_2\hat{h}^{-1}(\Delta t^2+\hat{h}^2)+\|\textbf{u}^{m+1}\|_{\infty}\\
\leq&C_2(1+C_3^2)\hat{h}_*+\frac{C_1}{2}\leq C_1.
\endaligned
\end{equation}
Then the proof of induction hypothesis (\ref{e_boundedness of maxnorm U}) ends.
\end{proof}
\medskip

{\it Proof of Theorem \ref{thm: error_estimate}}.
Finally, 
 using Lemmas \ref{le_auxiliary}, \ref{lem: error_estimate_up} and \ref{lem: boundedness of maxnorm}, we  arrive at the conclusions of  Theorem \ref{thm: error_estimate}.
 \framebox(4,6){}

 \section{Numerical experiments} \label{Numerical experiments}
 In this section, 
 we provide some numerical results to  verify the accuracy of the proposed numerical scheme.

 We take $\Omega=(0,1)\times (0,1)$, $T=1$, $\nu=1$ and $\delta=0.1$, and set $\Delta t=h=k$. We denote
\begin{flalign*}
\renewcommand{\arraystretch}{1.5}
  \left\{
   \begin{array}{l}
\|e_X\|_{\infty,2}=\max\limits_{0\leq n\leq m}\left\{\|e_{X}^{n}\|\right\},\\
\|e_p\|_{2,2}=\left(\sum\limits_{n=0}^{m}\Delta t\left\|P^{n+1/2}-p^{n+1/2}\right\|_{l^2,M}^2\right)^{1/2},\\
\|e_q\|_{\infty}=\max\limits_{0\leq n\leq m}|Q^{n}-q^{n}|,
\end{array}\right.
\end{flalign*}
where $X=\textbf{u}, d_xu_1, D_yu_1$.

{\bf Example 1}. The right hand side of the equations are computed according to the analytic solution given as below:\\
\begin{equation*}
\aligned
\begin{cases}
p(x,y,t)={\rm exp}(t)(x^3-1/4),\\
u_1(x,y,t)=-{\rm  exp}(t)x^2(x-1)^2y(y-1)(2y-1)/256,\\
u_2(x,y,t)={\rm exp}(t)x(x-1)(2x-1)y^2(y-1)^2/256.
\end{cases}
\endaligned
\end{equation*}
The numerical results for Example 1 are presented in Tables \ref{table1_example1}-\ref{table1_example2}. We observe that the results are  consistent with the error estimates in Theorem \ref{thm: error_estimate}.

{\bf Example 2}. The right hand side of the equations are computed according to the analytic solution given as below:\\
\begin{equation*}
\aligned
\begin{cases}
p(x,y,t)={\rm exp}(t)(\sin(\pi y)-2/\pi),\\
u_1(x,y,t)={\rm exp}(t)\sin^2(\pi x)\sin(2\pi y),\\
u_2(x,y,t)=-{\rm exp}(t)\sin(2\pi x)\sin^2(\pi y).
\end{cases}
\endaligned
\end{equation*}

The numerical results for Example 2 are presented in Tables \ref{table2_example1}-\ref{table2_example2}. We observe uniform second-order convergence for all quantities, including  $D_yu_1$ for which  Theorem \ref{thm: error_estimate} predicts only 3/2-order convergence. This is due to  the fact that, for this particular exact solution, we have $\frac{\partial^2 u^x}{\partial y^2}=0$ for $y=0$ and $y=1$ and $\frac{\partial^2 u^y}{\partial x^2}=0$ for $x=0$ and $x=1$, which lead to a super-convergence for $D_yu_1$ (see related results in \cite{rui2017stability,li2018superconvergence}). 

Note that we only presented the results for $u_1$ in both examples since the results for $u_2$ are similar to $u_1$.

\begin{table}[htbp]
\renewcommand{\arraystretch}{1.1}
\small
\centering
\caption{Convergence rates of the velocity for Example 1.}\label{table1_example1}
\begin{tabular}{p{1.5cm}p{1.3cm}p{1.2cm}p{1.7cm}p{1.2cm}p{2.0cm}p{1.2cm}}\hline
$N_x\times N_y$    &$\|e_{\textbf{u}}\|_{\infty,2}$    &Rate &$\|e_{d_xu_1}\|_{\infty,2}$   &Rate  
&$\|e_{D_yu_1}\|_{\infty,2}$    &Rate   \\ \hline
$2^4\times 2^4$     &1.05E-6               & ---    &2.78E-6         &---  &8.71E-6         &---\\
$2^5\times 2^5$     &2.59E-7            & 2.02    &6.82E-7         &2.03  &3.21E-6        &1.44\\
$2^6\times 2^6$      &6.41E-8           & 2.01    &1.65E-7         &2.04 &1.16E-6         &1.47\\
$2^7\times 2^7$    &1.59E-8            &2.01     &4.01E-8         &2.05 &4.16E-7       &1.48\\
\hline
\end{tabular}
\end{table}

\begin{table}[htbp]
\renewcommand{\arraystretch}{1.1}
\small
\centering
\caption{Convergence rates of the pressure and auxiliary variable for Example 1.}\label{table2_example1}
\begin{tabular}{p{1.5cm}p{1.3cm}p{1.2cm}p{1.8cm}p{1.2cm}}\hline
$N_x\times N_y$    &$\|e_{p}\|_{2,2}$    &Rate &$\|e_{q}\|_{\infty}$   &Rate   \\ \hline
$2^4\times 2^4$     &1.01E-3               & ---    &5.10E-11         &---  \\
$2^5\times 2^5$     &2.52E-4               & 2.00    &1.36E-11         &1.90  \\
$2^6\times 2^6$      &6.30E-5              & 2.00    &3.44E-12         &1.99 \\
$2^7\times 2^7$      &1.57E-5              &2.00     &8.57E-13         &2.00 \\
\hline
\end{tabular}
\end{table}

\begin{table}[htbp]
\renewcommand{\arraystretch}{1.1}
\small
\centering
\caption{Convergence rates of the velocity for Example 2.}\label{table1_example2}
\begin{tabular}{p{1.5cm}p{1.3cm}p{1.2cm}p{1.7cm}p{1.2cm}p{1.8cm}p{1.2cm}}\hline
$N_x\times N_y$    &$\|e_{\textbf{u}}\|_{\infty,2}$    &Rate &$\|e_{d_xu_1}\|_{\infty,2}$   &Rate  
&$\|e_{D_yu_1}\|_{\infty,2}$    &Rate   \\ \hline
$2^4\times 2^4$     &2.15E-2               & ---    &4.94E-2         &---  &9.53E-2         &---\\
$2^5\times 2^5$     &5.21E-3           & 2.05    &1.28E-2         &1.94  &2.31E-2         &2.04\\
$2^6\times 2^6$      &1.28E-3           &2.02    &3.29E-3         &1.96 &5.70E-3         &2.02\\
$2^7\times 2^7$     &3.18E-4            &2.01     &8.20E-4         &2.01 &1.41E-3        &2.01\\
\hline
\end{tabular}
\end{table}

\begin{table}[htbp]
\renewcommand{\arraystretch}{1.1}
\small
\centering
\caption{Convergence rates of the pressure and auxiliary variable for Example 2.}\label{table2_example2}
\begin{tabular}{p{1.5cm}p{1.3cm}p{1.2cm}p{1.5cm}p{1.2cm}}\hline
$N_x\times N_y$    &$\|e_{p}\|_{2,2}$    &Rate &$\|e_{q}\|_{\infty}$   &Rate   \\ \hline
$2^4\times 2^4$     &6.38E-2               & ---    &1.35E-2         &---  \\
$2^5\times 2^5$     &1.42E-2               & 2.17    &3.49E-3         &1.95  \\
$2^6\times 2^6$      &3.27E-3              & 2.12    &8.72E-4         &2.00 \\
$2^7\times 2^7$      &7.97E-4              &2.04     &2.17E-4         &2.01 \\
\hline
\end{tabular}
\end{table}

\medskip
{\bf Example 3}. We take the initial condition to be $u_{1}^0(x,y)=\sin^2(\pi x)\sin(2\pi y)$, $u_{2}^0(x,y)=\sin(2\pi x)\sin^2(\pi y)$ and ${\bf f}=0$. 

We present in  Fig. \ref{fig: S} the time evolutions of the two approximate solutions of  \eqref{e_semi_implementation_8} for {\bf Example 3} as  $\Delta t=1/N\rightarrow 0$  in \eqref{e_semi_implementation_8}. One observes clearly that one solution of  \eqref{e_semi_implementation_8} converges to  the exact solution 1, while the other solution converges to zero. 

\begin{figure}[!htp]
\centering
\includegraphics[scale=0.4]{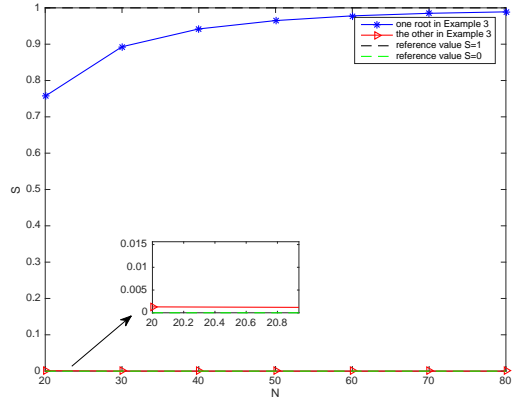}
\caption{Time evolutions of the two approximate solutions of  \eqref{e_semi_implementation_8} as  $\Delta t\rightarrow 0$ for example 3}  \label{fig: S}
\end{figure}

\section*{Appendix\ A\ \ Finite difference discretization on the staggered grids}
\renewcommand\thesection{A}
To fix the idea, we consider $\Omega=(L_{lx},L_{rx})\times (L_{ly},L_{ry})$. Three dimensional rectangular domains can be dealt with similarly.

The two dimensional domain $\Omega$ is partitioned by  $\Omega_x\times \Omega_y$,    where
  \begin{eqnarray}
    & & \Omega_x: L_{lx}=x_{0}<x_{1}<\cdots<x_{N_x-1}<x_{N_x}=L_{rx},\nonumber\\
    & & \Omega_y: L_{ly}=y_{0}<y_{1}<\cdots<y_{N_y-1}<y_{N_y}=L_{ry}.\nonumber
   \end{eqnarray}
For simplicity we also use the following notations:
\begin{equation}    \label{eq:grid-on-bound}
\left\{
\begin{array}{ll}
x_{-1/2}=x_{0}=L_{lx},& x_{N_x+1/2}=x_{N_x}=L_{rx},\\
y_{-1/2}=y_{0}=L_{ly},& y_{N_y+1/2}=y_{N_y}=L_{ry}.
\end{array}
\right.
\end{equation}
For possible integers $i,j$, $0\leq i\leq N_x,\ 0\leq j\leq N_y$, define
    \begin{eqnarray}
& & x_{i+1/2}=\frac{x_{i}+x_{i+1}}{2},\quad  h_{i+1/2}=x_{i+1}-x_{i},\quad   h=\max\limits_{i}h_{i+1/2},\nonumber\\
& & h_{i}=x_{i+1/2}-x_{i-1/2}=\frac{h_{i+1/2}+h_{i-1/2}}{2},\nonumber\\
& & y_{j+1/2}=\frac{y_{j}+y_{j+1}}{2},\quad  k_{j+1/2}=y_{j+1}-y_{j}, \quad   k=\max\limits_{j}k_{j+1/2},\nonumber\\
& & k_{j}=y_{j+1/2}-y_{j-1/2}=\frac{k_{j+1/2}+k_{j-1/2}}{2},\nonumber\\
& & \Omega_{i+1/2,j+1/2}=(x_{i},x_{i+1})\times (y_{j},y_{j+1}).\nonumber
    \end{eqnarray}
It is clear that
$$ \displaystyle
h_{0}=\frac{h_{1/2}}{2},\ h_{N_x}=\frac{h_{N_x-1/2}}{2} ,\ \ k_{0}=\frac{k_{1/2}}{2},\ k_{N_y}=\frac{k_{N_y-1/2}}{2}.$$
For a function $f(x,y)$, let $f_{l,m}$ denote $f(x_l,y_m)$ where $l$ may take values $i,\ i+1/2$ for  integer $i$, and $m$ may take values $j,\ j+1/2$ for  integer $j$. For discrete functions with values at proper nodal-points, define
\begin{equation}
\left\{
\begin{array}{lll}
  \displaystyle [d_{x}f]_{i+1/2,m}=\frac{f_{i+1,m}-f_{i,m}}{h_{i+1/2}},\qquad & \displaystyle  [D_{y}f]_{l,j+1}=\frac{f_{l,j+3/2}-f_{l,j+1/2}}{k_{j+1}}, \\
 \displaystyle [D_{x}f]_{i+1,m}=\frac{f_{i+3/2,m}-f_{i+1/2,m}}{h_{i+1}}, &  \displaystyle  [d_{y}f]_{l,j+1/2}=\frac{f_{l,j+1}-f_{l,j}}{k_{j+1/2}}.
\end{array}
\right. \label{def:difference}
\end{equation}
For functions $f$ and $g$, define some discrete $l^2$ inner products and norms as follows.
\begin{eqnarray}
 (f,g)_{l^2,M} &\equiv &  \sum\limits_{i=0}^{N_x-1}\sum\limits_{j=0}^{N_y-1} h_{i+1/2}k_{j+1/2} f_{i+1/2,j+1/2} g_{i+1/2,j+1/2}, \label{inner:l2-M}\\
      (f,g)_{l^2,T_x} &\equiv &  \sum\limits_{i=0}^{N_x}\sum\limits_{j=1}^{N_y-1} h_{i}k_{j} f_{i,j} g_{i,j}, \label{inner:l2_x}\\
  (f,g)_{l^2,T_y} &\equiv &  \sum\limits_{i=1}^{N_x-1}\sum\limits_{j=0}^{N_y} h_{i}k_{j} f_{i,j} g_{i,j}, \label{inner:l2_y}\\
   \|f\|_{l^2,\xi}^2 &\equiv &  (f,f)_{l^2,\xi},\qquad \xi=M,\ T_x,\ T_y   . \label{norm:l2}
 \end{eqnarray}
Further define discrete $l^2$ inner products and norms as follows.
\begin{eqnarray}
& &(f,g)_{l^2,T,M} \equiv \sum\limits_{i=1}^{N_x-1}\sum\limits_{j=0}^{N_y-1} h_{i}k_{j+1/2} f_{i,j+1/2} g_{i,j+1/2}, \label{inner:l2-T-M}\\
& & (f,g)_{l^2,M,T} \equiv \sum\limits_{i=0}^{N_x-1}\sum\limits_{j=1}^{N_y-1} h_{i+1/2}k_{j} f_{i+1/2,j}g_{i+1/2,j}, \label{inner:l2-M-T}\\
& &  \|f\|_{l^2,T,M}^2 \equiv   (f,f)_{l^2,T,M}, \quad  \|f\|_{l^2,M,T}^2 \equiv  (f,f)_{l^2,M,T}. \label{norm:l2-T-M}
 \end{eqnarray}
For vector-valued functions $\textbf{u}=(u_1,u_2)$, it is clear that
 \begin{eqnarray}
 \|d_x u_{1}\|_{l^2,M}^2 &\equiv &  \sum\limits_{i=0}^{N_x-1}\sum\limits_{j=0}^{N_y-1}h_{i+1/2}k_{j+1/2} |d_x u_{1,i+1/2,j+1/2}|^2, \label{norm:d-x-u-x}\\
 \|D_y u_{1}\|_{l^2,T_y}^2 &\equiv &     \sum\limits_{i=1}^{N_x-1}\sum\limits_{j=0}^{N_y}h_{i}k_{j} |D_y u_{1,i,j}|^2,   \label{norm:d-y-u-x}
 \end{eqnarray}
and $\|d_y u_{2}\|_{l^2,M},\ \|D_x u_{2}\|_{l^2,T_x}$ can be represented similarly.
Finally define the discrete $H^1$-norm and discrete $l^2$-norm of a vectored-valued function $\textbf{u}$,
 \begin{eqnarray}
  \|D \textbf{u}\|^2  & \equiv & \|d_x u_{1}\|_{l^2,M}^2+ \|D_y u_{1}\|_{l^2,T_y}^2+  \|D_x u_{2}\|_{l^2,T_x}^2+\|d_y u_{2}\|_{l^2,M}^2. \label{norm:d-u}\\
  \|\textbf{u}\|_{l^2}^2 &  \equiv & \| u_{1}\|_{l^2,T,M}^2+ \|  u_{2}\|_{l^2,M,T}^2. \label{norm:l2-u}
 \end{eqnarray}
 For simplicity we only consider the case that for all $h_{i+1/2}=h,\ k_{j+1/2}=k$, i.e. uniform meshes are used both in $x$ and $y$-directions. 
 
Finally we present the following useful lemma.
 \begin{lemma}  \label{lemma:U-P-Relation}
\cite{weiser1988convergence} Let $\{V_{1,i,j+1/2}\},\{V_{2,i+1/2,j}\}$ and $\{q_{1,i+1/2,j+1/2}\},\{q_{2,i+1/2,j+1/2}\}$
be discrete functions  with
$V_{1,0,j+1/2}=V_{1,N_x,j+1/2}=V_{2,i+1/2,0}=V_{2,i+1/2,N_y}=0$, with proper integers $i$ and $j$.
 Then there holds
 \begin{equation}
 \left\{
  \begin{array}{lll}
        (D_x q_1,V_1)_{l^2,T,M}&=& -( q_1, d_x V_1)_{l^2,M},\\
        (D_y q_2,V_2)_{l^2,M,T}&=& -( q_2, d_y V_2)_{l^2,M}.
  \end{array}
  \right.
 \end{equation}
\end{lemma}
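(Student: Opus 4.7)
The plan is to prove this as a straightforward discrete summation-by-parts identity, mirroring the continuous integration-by-parts formula $\int q \,\partial_x V \,dx = -\int V\,\partial_x q\,dx$ when $V$ vanishes on $\partial\Omega$. Only the first identity needs to be carried out in detail; the second follows by swapping the roles of $x$ and $y$ (and of the $M,T$ subscripts).

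First I would unfold both inner products explicitly. By \eqref{inner:l2-T-M} and the definition \eqref{def:difference} of $D_x$, together with the uniform-mesh assumption (so that $h_i=h_{i+1/2}=h$ at interior nodes), the left-hand side becomes
\begin{equation*}
(D_x q_1,V_1)_{l^2,T,M}
=\sum_{i=1}^{N_x-1}\sum_{j=0}^{N_y-1} k_{j+1/2}\bigl(q_{1,i+1/2,j+1/2}-q_{1,i-1/2,j+1/2}\bigr)V_{1,i,j+1/2}.
\end{equation*}
Similarly, by \eqref{inner:l2-M} and the definition of $d_x$, the right-hand side equals
\begin{equation*}
-(q_1,d_xV_1)_{l^2,M}
=-\sum_{i=0}^{N_x-1}\sum_{j=0}^{N_y-1} k_{j+1/2}\,q_{1,i+1/2,j+1/2}\bigl(V_{1,i+1,j+1/2}-V_{1,i,j+1/2}\bigr),
\end{equation*}
where the $h_{i+1/2}$ in the measure cancels the $1/h_{i+1/2}$ in $d_xV_1$.

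Next, fixing $j$, I would split the right-hand side into two sums and perform an index shift $i\mapsto i-1$ in the sum involving $V_{1,i+1,j+1/2}$ to rewrite it as
\begin{equation*}
-\sum_{i=1}^{N_x}k_{j+1/2}\,q_{1,i-1/2,j+1/2}V_{1,i,j+1/2}+\sum_{i=0}^{N_x-1}k_{j+1/2}\,q_{1,i+1/2,j+1/2}V_{1,i,j+1/2}.
\end{equation*}
Now I invoke the boundary conditions $V_{1,0,j+1/2}=V_{1,N_x,j+1/2}=0$: these kill the $i=0$ term in the second sum and the $i=N_x$ term in the first sum. What remains is precisely
\begin{equation*}
\sum_{i=1}^{N_x-1}k_{j+1/2}\bigl(q_{1,i+1/2,j+1/2}-q_{1,i-1/2,j+1/2}\bigr)V_{1,i,j+1/2},
\end{equation*}
which agrees with the expression obtained for $(D_x q_1,V_1)_{l^2,T,M}$ after summing back over $j$. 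This proves the first identity; the second is verbatim with $x\leftrightarrow y$, $T,M\leftrightarrow M,T$, and uses $V_{2,i+1/2,0}=V_{2,i+1/2,N_y}=0$.

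There is no real obstacle here: the proof is pure bookkeeping. The only point requiring a bit of care is the distinction between the summation ranges in $(\cdot,\cdot)_{l^2,M}$ (where $i$ runs over $0,\dots,N_x-1$) and in $(\cdot,\cdot)_{l^2,T,M}$ (where $i$ runs over $1,\dots,N_x-1$), and the fact that the boundary mesh widths $h_0,h_{N_x}$ defined in the appendix equal $h/2$; the latter does not actually intervene because the boundary terms are annihilated by the hypothesis on $V$ before the mesh weights matter.
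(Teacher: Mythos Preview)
Your argument is correct: the identity is precisely a discrete summation-by-parts, and your index shift plus the boundary hypothesis on $V_1$ (respectively $V_2$) yields the result exactly as you wrote. One small remark: the uniform-mesh assumption you invoke is in fact unnecessary for the cancellation of the mesh widths, since the weight $h_i$ in $(\cdot,\cdot)_{l^2,T,M}$ cancels the $1/h_i$ in $D_x$, and likewise $h_{i+1/2}$ cancels $1/h_{i+1/2}$ in $d_x$, regardless of uniformity.

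As for comparison with the paper: the paper does not supply its own proof of this lemma but simply cites it from \cite{weiser1988convergence}. Your direct verification is therefore more than the paper offers, and it is the standard way such an identity is established.
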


\bibliographystyle{siamplain}
\bibliography{SAV_MAC}

\begin{thebibliography}{10}

\bibitem{chen2018energy}
{\sc H.~Chen, S.~Sun, and T.~Zhang}, {\em Energy stability analysis of some
  fully discrete numerical schemes for incompressible {N}avier-{S}tokes
  equations on staggered grids}, Journal of Scientific Computing, 75 (2018),
  pp.~427--456.

\bibitem{dawson1998two}
{\sc C.~N. Dawson, M.~F. Wheeler, and C.~S. Woodward}, {\em A two-grid finite
  difference scheme for nonlinear parabolic equations}, SIAM Journal on
  Numerical Analysis, 35 (1998), pp.~435--452.

\bibitem{duran1990superconvergence}
{\sc R.~Dur{\'a}n}, {\em Superconvergence for rectangular mixed finite
  elements}, Numerische Mathematik, 58 (1990), pp.~287--298.

\bibitem{girault1996finite}
{\sc V.~Girault and H.~Lopez}, {\em Finite-element error estimates for the
  {MAC} scheme}, IMA Journal of Numerical Analysis, 16 (1996), pp.~347--379.

\bibitem{girault2012finite}
{\sc V.~Girault and P.-A. Raviart}, {\em Finite element methods for
  Navier-Stokes equations: theory and algorithms}, vol.~5, Springer Science \&
  Business Media, 2012.

\bibitem{gunzburger1991analysis}
{\sc M.~Gunzburger, L.~Hou, and T.~P. Svobodny}, {\em Analysis and finite
  element approximation of optimal control problems for the stationary
  navier-stokes equations with dirichlet controls}, ESAIM: Mathematical
  Modelling and Numerical Analysis, 25 (1991), pp.~711--748.

\bibitem{han1998new}
{\sc H.~Han and X.~Wu}, {\em A new mixed finite element formulation and the
  {MAC} method for the stokes equations}, SIAM Journal on Numerical Analysis,
  35 (1998), pp.~560--571.

\bibitem{li2015superconvergence}
{\sc J.~Li and S.~Sun}, {\em The superconvergence phenomenon and proof of the
  {MAC} scheme for the stokes equations on non-uniform rectangular meshes},
  Journal of Scientific Computing, 65 (2015), pp.~341--362.

\bibitem{li2018stability}
{\sc X.~Li and H.~Rui}, {\em Stability and superconvergence of {MAC} schemes
  for time dependent {S}tokes equations on nonuniform grids}, Journal of
  Mathematical Analysis and Applications, 466 (2018), pp.~1499--1524.

\bibitem{li2018superconvergence}
{\sc X.~Li and H.~Rui}, {\em Superconvergence of characteristics marker and
  cell scheme for the {N}avier-{S}tokes equations on nonuniform grids}, SIAM
  Journal on Numerical Analysis, 56 (2018), pp.~1313--1337.

\bibitem{li2019superconvergence}
{\sc X.~Li and H.~Rui}, {\em Superconvergence of a fully conservative finite
  difference method on non-uniform staggered grids for simulating wormhole
  propagation with the {D}arcy-{B}rinkman-{F}orchheimer framework}, Journal of
  Fluid Mechanics, 872 (2019), pp.~438--471.

\bibitem{lin2019numerical}
{\sc L.~Lin, Z.~Yang, and S.~Dong}, {\em Numerical approximation of
  incompressible {N}avier-{S}tokes equations based on an auxiliary energy
  variable}, Journal of Computational Physics, 388 (2019), pp.~1--22.

\bibitem{monk1994convergence}
{\sc P.~Monk and E.~S{\"u}li}, {\em A convergence analysis of {Y}ee's scheme on
  nonuniform grids}, SIAM Journal on Numerical Analysis, 31 (1994),
  pp.~393--412.

\bibitem{Nicolaides1992Analysis}
{\sc R.~Nicolaides}, {\em Analysis and convergence of the {MAC} scheme. {I}.
  the linear problem}, SIAM J. on Numer. Anal., 29 (1992), pp.~1579--1591.

\bibitem{Perot2000Conservation}
{\sc B.~Perot}, {\em Conservation properties of unstructured staggered mesh
  schemes}, J. of Comput. Phys., 159 (2000), pp.~58--89.

\bibitem{Perot2011Discrete}
{\sc J.~B. Perot}, {\em Discrete conservation properties of unstructured mesh
  schemes}, Annu. Rev. of Fluid Mech., 43 (2011), pp.~299--318.

\bibitem{rui2017stability}
{\sc H.~Rui and X.~Li}, {\em Stability and superconvergence of {MAC} scheme for
  stokes equations on nonuniform grids}, SIAM Journal on Numerical Analysis, 55
  (2017), pp.~1135--1158.

\bibitem{rui2012block}
{\sc H.~Rui and H.~Pan}, {\em A block-centered finite difference method for the
  {D}arcy-{F}orchheimer model}, SIAM Journal on Numerical Analysis, 50 (2012),
  pp.~2612--2631.

\bibitem{shen2018convergence}
{\sc J.~Shen and J.~Xu}, {\em Convergence and error analysis for the scalar
  auxiliary variable ({SAV}) schemes to gradient flows}, SIAM Journal on
  Numerical Analysis, 56 (2018), pp.~2895--2912.

\bibitem{shen2018scalar}
{\sc J.~Shen, J.~Xu, and J.~Yang}, {\em The scalar auxiliary variable ({SAV})
  approach for gradient flows}, Journal of Computational Physics, 353 (2018),
  pp.~407--416.

\bibitem{temam2001navier}
{\sc R.~Temam}, {\em Navier-Stokes equations: theory and numerical analysis},
  vol.~343, American Mathematical Soc., 2001.

\bibitem{lebedev1964difference}
{\sc V.L.Lebedev}, {\em Difference analogues of orthogonal decompositions,
  fundamental differential operators and certain boundary-value problems of
  mathematical physics}, Z. Vycisl. Mat. i Mat. Fiz., 4 (1964), pp.~443--465.

\bibitem{weiser1988convergence}
{\sc A.~Weiser and M.~F. Wheeler}, {\em On convergence of block-centered finite
  differences for elliptic problems}, SIAM Journal on Numerical Analysis, 25
  (1988), pp.~351--375.

\bibitem{Welch1965The}
{\sc J.~E. e.~a. Welch}, {\em The {MAC} method: A computing technique for
  solving viscous, incompressible, transient fluid flow problems involving free
  surface},  (1965).

\end{thebibliography}

\end{document}